\documentclass[letterpaper]{article}
\usepackage{graphicx} 
\usepackage[utf8]{inputenc}
\usepackage[margin=1in]{geometry}
\usepackage{microtype}
\usepackage{url}
\usepackage{color}
\usepackage{hyperref}
\usepackage{enumerate}
\usepackage{amsfonts,amsmath,amssymb,amsthm,bbm,bm}
\usepackage{mathtools}
\allowdisplaybreaks[1]
\usepackage{thmtools}
\usepackage{booktabs}
\usepackage{tikz}
\usepackage{shuffle}
\usepackage{verbatim}
\usepackage{stmaryrd}
\usepackage{authblk}

\usetikzlibrary{decorations}
\usetikzlibrary{decorations.pathreplacing}
\usepackage{cleveref}
\newtheorem{theorem}{Theorem}[section]
\newtheorem{corollary}[theorem]{Corollary}

\newtheorem{Problem}[theorem]{Problem}
\newtheorem{lemma}[theorem]{Lemma}
\newtheorem{proposition}[theorem]{Proposition}

\newtheorem{remark}[theorem]{Remark}
\newtheorem{definition}[theorem]{Definition}
\newtheorem{example}[theorem]{Example}

\newcommand{\R}{\mathbb{R}}
\newcommand{\D}{\mathbb{D}}
\newcommand{\E}{\mathbb{E}}
\newcommand{\N}{\mathbb{N}}
\newcommand{\C}{\mathbb{C}}
\newcommand{\cX}{\mathcal{X}}
\newcommand{\bbP}{\mathbb{P}}

\newcommand{\bX}{\mathbf{X}}
\newcommand{\bY}{\mathbf{Y}}
\newcommand{\bV}{\mathbf{V}}
\newcommand{\bA}{\mathbf{A}}
\newcommand{\bN}{\mathbf{N}}
\newcommand{\bbF}{\mathbb{F}}

\newcommand{\lin}{\mathcal{L}}
\newcommand{\cP}{\mathcal{P}}
\newcommand{\cD}{\mathcal{D}}
\newcommand{\cF}{\mathcal{F}}
\newcommand{\cH}{\mathcal{H}}
\newcommand{\cI}{\mathcal{I}}
\newcommand{\so}{\mathfrak{so}}
\newcommand{\fu}{\mathfrak{u}}
\newcommand{\fg}{\mathfrak{g}}
\newcommand{\dd}{\mathrm{d}}

\newcommand{\bK}{\mathbf{K}}

\newcommand{\Ad}{\mathrm{Ad}}
\newcommand{\var}{\text{-}\mathrm{var}}
\newcommand{\Lip}{\mathrm{Lip}}
\newcommand{\GL}{\mathrm{GL}}
\newcommand{\ESig}{\mathrm{ESig}}
\newcommand{\op}{\mathrm{op}}
\newcommand{\HS}{\mathrm{HS}}
\newcommand{\Lie}{\mathrm{Lie}}

\title{The $L^q$-bounds for Derivatives of Unitary Developments of Random Continuous Geometric Rough Paths}
\author[1]{Chong Liu\thanks{liuchong@shanghaitech.edu.cn}}

\author[2]{Zijiu Lyu\thanks{zijiu.lyu@berkeley.edu}}

\author[3]{Hao Ni\thanks{h.ni@ucl.ac.uk}}

\affil[1]{Institute of Mathematical Science, ShanghaiTech University}
\affil[2]{Department of Industrial Engineering and Operations Research, University of California, Berkeley}
\affil[3]{Department of Mathematics, University College London}

\date{}

\begin{document}

\maketitle

\begin{abstract}
In this paper we derive an explicit formula for derivatives of unitary developments of random  continuous geometric rough paths of all orders and establish proper $L^q$-bounds for them, which allow us to study the analyticity of their characteristic functions in a quantitative manner and then prove that under some mild conditions the distributions of the signatures of random  continuous geometric rough paths are determined by their expected signatures, provided the latter have positive radius of convergence. In particular, we give an (partial) affirmative answer to an open question asked in \cite{LyonsHao2015} that the expected signature of stopped Brownian motion determines the law of its signature.
\end{abstract}



\section{Introduction}
In probability theory there is a standard result see e.g. \cite[Theorem 30.1]{Billingsley1995}, which can be stated as the following theorem:
\begin{theorem}\label{thm:classical-moment-determinacy}
For an $\R$-valued random variable $X$, if the power series of its moments generating function $\varphi_X(\lambda) := \sum_{n = 0}^\infty \frac{\E[X^n]}{n!}\lambda^n, \lambda \in \R$ has a positive radius of convergence $r > 0$, then its characteristic function $\phi_X(\lambda) := \E[e^{\mathrm{i}\lambda X}], \lambda \in \R$ and the distribution $\mu_X$ of $X$ are determined by the collections of all (rescaled) moments of $X$, i.e., $(1, \E[X], \ldots, \frac{\E[X^n]}{n!}, \ldots)$.
\end{theorem}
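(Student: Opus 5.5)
The plan is the classical analytic-continuation argument. The first step converts the hypothesis into a bound on absolute moments. Positive radius of convergence $r$ means $\limsup_n (|\E[X^n]|/n!)^{1/n} = 1/r$. For even $n$ we have $\E[X^n] = \E[|X|^n]$, so $\E[|X|^{2k}]/(2k)! \le C s^{-2k}$ for any fixed $0<s<r$. For odd moments I will use Lyapunov/Cauchy--Schwarz, $\E|X|^{2k+1} \le (\E X^{2k}\,\E X^{2k+2})^{1/2}$, which together with Stirling gives $\E[|X|^n]\,t^n/n! \to 0$ for every $t<r$ (after shrinking slightly). In particular all moments are finite, and $\E[e^{t|X|}]<\infty$ for $|t|<r$.

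The second step expands $\phi_X$ locally. Since all moments are finite, $\phi_X$ is $C^\infty$ with $\phi_X^{(n)}(\lambda) = \mathrm{i}^n\E[X^n e^{\mathrm{i}\lambda X}]$. Taylor's theorem with remainder and the elementary bound $|e^{\mathrm{i}hx}-\sum_{k<n}(\mathrm{i}hx)^k/k!|\le |hx|^n/n!$ give, for every $\lambda\in\R$ and $|h|<t<r$,
\[
\Bigl|\phi_X(\lambda+h)-\sum_{k=0}^{n-1}\frac{\phi_X^{(k)}(\lambda)}{k!}h^k\Bigr| \le \frac{|h|^n\E[|X|^n]}{n!}\longrightarrow 0 .
\]
So $\phi_X$ equals its Taylor series on $(\lambda-t,\lambda+t)$ around every $\lambda$, with a radius $t$ uniform in $\lambda$. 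At $\lambda=0$ the coefficients are $\mathrm{i}^k\E[X^k]/k!$. These are exactly the rescaled moments, up to the factor $\mathrm{i}^k$.

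The third step propagates and concludes. Let $Y$ be another variable with the same rescaled moments. Step one applies to $Y$ too, so $\phi_X=\phi_Y$ on $(-t,t)$. Differentiating under the expectation at points inside this interval shows all derivatives agree at $\pm t/2$. Step two then gives agreement on $(-3t/2,3t/2)$. Inducting along steps of length $t/2$ gives $\phi_X=\phi_Y$ on all of $\R$. Lévy's uniqueness (inversion) theorem then gives $\mu_X=\mu_Y$.

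The main obstacle is the first step: turning the radius of convergence of $\sum \E[X^n]\lambda^n/n!$, which involves signed odd moments, into control of absolute moments $\E|X|^n$. The even-moment comparison together with the interpolation inequality handles this. The uniformity of the radius in the second step is what makes the stepwise continuation in the third step work.
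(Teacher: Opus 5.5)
Your proof is correct and follows essentially the argument the paper cites from \cite[Theorem 30.1]{Billingsley1995} and imitates in its own setting: turn the radius of convergence into bounds on absolute moments, use the Taylor remainder bound $|h|^n\E[|X|^n]/n!$ (uniform in the base point $\lambda$) to get local analyticity of $\phi_X$ with a fixed radius, propagate step by step, and conclude with L\'evy's uniqueness theorem. The only difference is cosmetic: you handle odd absolute moments by Cauchy--Schwarz between neighbouring even moments rather than Billingsley's inequality $|x|^{2k-1}\le 1+|x|^{2k}$, and your version does not even need Stirling, since $\sqrt{(2k)!\,(2k+2)!}/(2k+1)!\to 1$.
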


On the other hand, it is also well known that in  signature and rough path theory one also has the notions ``moments generating function'' and ''characteristic function'' for random continuous ($\R^d$-valued) geometric rough paths\footnote{See Section \ref{sec:rough-path-preliminaries} and the literature therein for these conceptions.}. More precisely, let $\bX_{[0, T]} = (\bX_t)_{t \le T}$ be a random continuous geometric rough path defined on some time interval $[0, T]$ (where $T$ can be a deterministic or random time), by the Lyons extension theorem (\cite[Theorem 3.1.2]{Lyons2002}) it admits a unique \textit{signature} $S(\bX)_{0, T}$, which can be expressed as an infinite sequence $(1, S(\bX)^1_{0, T}, \ldots, S(\bX)^n_{0, T}, \ldots) \in \prod_{n = 0}^\infty (\R^d)^{ \otimes n}$. If $S(\bX)^n_{0, T}$ is integrable for all $n \ge 0$, then the sequence $\ESig(\bX_{[0, T]}) := (1, \E[S(\bX)^1_{0, T}], \ldots, \E[S(\bX)^n_{0, T}], \ldots)$ is called the \textit{expected signature} of $\bX_{[0, T]}$. Further, for any integer $k \ge 1$ and linear operator $M \in \lin(\R^d, \fu(k))$ with $\fu(k) \subset \C^{k \times k}$ denoting the unitary Lie algebra of order $k$, the unitary matrix-valued series $\Phi_{\bX_{[0, T]}}(M) := \sum_{n = 0}^\infty M^{ \otimes n}(S(\bX)^n_{0, T}) \in U(k)$, which is actually the unique solution $Y^M_T$ (evaluated at the terminal time $T$) to the following linear differential equation
\begin{equation}
\label{eq:unitary-development-rde}
 \dd Y^M_t = Y^M_t M(\dd \bX_t), \quad Y^M_0 = I_k,
\end{equation}
is called the unitary development of $\bX_{[0, T]}$ under $M$, and the expectation of unitary development $\phi_{\bX_{[0, T]}}(M) = \E[\Phi_{\bX_{[0, T]}}(M)]$ or $\E[Y^M_T]$ is called the \textit{path characteristic function} of $\bX_{[0, T]}$ (evaluated at $M$). It is easy to see that if we consider an $\R$-valued random variable $X$ as a linear path $X_{[0, 1]} (t) := tX, t \in [0, 1]$, then $\ESig(X_{[0, 1]}) = (1, \E[X], \ldots, \frac{\E[X^n]}{n!}, \ldots)$. For this path, choosing $M \in \lin(\R, \fu(1))$ (which means that $M(x) = (\lambda x)\mathrm{i}$ for some $\lambda \in \R$) gives $\phi_{X_{[0, 1]}}(M) = \phi_X(\lambda) = \E[e^{\mathrm{i}\lambda X}]$. Due to these facts, it is not surprising that many classical results regarding moments generating functions and characteristic functions for $\R$-valued random variables remain valid for random geometric rough paths. For instance, if the expected signature of a random geometric rough path $\bX_{[0, T]}$ has an infinite radius of convergence (i.e., the power series $\sum_{n = 0}^\infty \|\E[S(\bX)^n_{0, T}]\|\lambda^n$ converges for all $\lambda \in \R$), then the distribution $\mu_{S(\bX)_{0, T}}$ of its signature $S(\bX)_{0, T}$ is uniquely determined by its expected signature $\ESig(\bX_{[0, T]})$. Moreover, for any random geometric rough path $\bX_{[0, T]}$, the distribution $\mu_{S(\bX)_{0, T}}$ is uniquely determined by its path characteristic function $\phi_{\bX_{[0, T]}}(M), M \in \bigsqcup_{k = 1}^\infty \lin(\R^d, \fu(k))$. Such characteristicness of expected signatures and path characteristic functions then allow people to apply them as computable and powerful discriminators to distinguish the distributions of signatures of random geometric rough paths, see e.g. \cite{Oberhauser2019}, \cite{ni2021sig} \cite{liao2024sig}, \cite{ChevyrevOberhauser2022}, \cite{Bonnier2023}, \cite{PCFGAN2023}, \cite{RestrictedPCF2024}, and \cite{TaoLiuNi2024} for concrete applications in classifications of time series and generative modelling of stochastic processes which are now very popular topics in machine learning and mathematical finance.

Since the expected signature $\ESig(\bX_{[0, T]})$ and path characteristic function $\phi_{\bX_{[0, T]}}(M)$ are natural extensions of moments generating function and characteristic functions of $\R$-valued random variables $X$ to geometric rough path-valued random variables $\bX_{[0, T]}$, we may ask a natural question that whether Theorem \ref{thm:classical-moment-determinacy} also holds true for random geometric rough path $\bX_{[0, T]}$ if we replace moments generating function $\varphi_X$ by expected signature $\ESig(\bX_{[0, T]})$ and characteristic function $\phi_X(\lambda)$ by path characteristic functions $\phi_{\bX_{[0, T]}}(M)$:

\begin{Problem}\label{prob:expected-signature-determinacy}
Let $\bX_{[0, T]}$ be a random continuous geometric rough path defined on some time interval $[0, T]$ (which can also be random). Suppose that the power series of its expected signature $\ESig(\bX_{[0, T]})$:

\begin{equation*}
 \sum_{n = 0}^\infty \|\E[S(\bX)^n_{0, T}]\|\lambda^n, \quad \lambda \in \R
\end{equation*}
has a positive (but only finite) radius of convergence $r_2(\bX_{[0, T]}) > 0$, then under which additional conditions on $\bX_{[0, T]}$ its path characteristic function $\phi_{\bX_{[0, T]}}(M), M \in \bigsqcup_{k = 1}^\infty \lin(\R^d, \fu(k))$ and the distribution of the random signature $S(\bX)_{0, T}$ can be determined by $\ESig(\bX_{[0, T]})$?
\end{Problem}

This paper aims to study Problem \ref{prob:expected-signature-determinacy} for various random continuous geometric rough paths. Our strategy is essentially same as the proof of Theorem \ref{thm:classical-moment-determinacy} in \cite[Theorem 30.1]{Billingsley1995}: since $r_2(\bX_{[0, T]}) > 0$, Theorem \ref{thm:expected-signature-radii} gives $r_1(\bX_{[0, T]}) > 0$, and one certainly has $\phi_{\bX_{[0, T]}}(\lambda M) = \sum_{n = 0}^\infty M^{ \otimes n}(\E[S(\bX)^n_{0, T}])\lambda^n$ for every $k \ge 1$, every non-zero $M \in \lin(\R^d, \fu(k))$, and $|\lambda| < \frac{r_1(\bX_{[0, T]})}{\|M\|_{\op}}$, which implies that $\phi_{\bX_{[0, T]}}(\lambda M)$ is determined by $\ESig(\bX_{[0, T]})$ in the interval $\lambda \in ( - \frac{r_1(\bX_{[0, T]})}{\|M\|_{\op}}, \frac{r_1(\bX_{[0, T]})}{\|M\|_{\op}})$. Suppose that we can find conditions on $\bX_{[0, T]}$ under which, for all $M \in \bigsqcup_{k = 1}^\infty \lin(\R^d, \fu(k))$ and $n \ge 0$, the norm of the $n$-th derivative of $\phi_{\bX_{[0, T]}}(\lambda M)$ with respect to $\lambda$ satisfies
\begin{equation}
\label{eq:pcf-derivative-bound}
 \bigg\|\frac{\dd^n}{\dd \lambda^n}\phi_{\bX_{[0, T]}}(\lambda M)\bigg\| = \bigg\|\E\bigg[ \frac{\dd^n}{\dd \lambda^n} Y^{\lambda M}_T \bigg]\bigg\| \le \bigg\| \frac{\dd^n}{\dd \lambda^n} Y^{\lambda M}_T \bigg\|_{L^q} \le B(n)
\end{equation}
for some $q \ge 1$ and some constant $B(n)$ which does not rely on the choice of $\lambda \in \R$\footnote{In fact we only need $B(n)$ can be chosen locally uniformly over $\lambda$, see Proposition \ref{prop:roc-determinacy-criterion}.}. Suppose also that there exists a constant $C = C(k, d, M) > 0$ independent of $\lambda \in \R$ such that the power series $\sum_{n = 0}^\infty \frac{B(n)}{n!}\xi^n$ converges for all $|\xi| < \frac{r_2(\bX_{[0, T]})}{C}$. These bounds imply that the function $\lambda \mapsto \phi_{\bX_{[0, T]}}(\lambda M)$ is analytic in $(\lambda - \frac{r_2(\bX_{[0, T]})}{C}, \lambda + \frac{r_2(\bX_{[0, T]})}{C})$ for all $\lambda \in \R$. By considering the Taylor expansion of $\lambda \mapsto \phi_{\bX_{[0, T]}}(\lambda M)$ and propagating iteratively, we find that $\lambda \mapsto \phi_{\bX_{[0, T]}}(\lambda M)$ is determined by $\ESig(\bX_{[0, T]})$ in $\bigcup_{m = 0}^\infty ( - \frac{r_1(\bX_{[0, T]})}{\|M\|_{\op}} - m\frac{r_2(\bX_{[0, T]})}{C}, \frac{r_1(\bX_{[0, T]})}{\|M\|_{\op}} + m\frac{r_2(\bX_{[0, T]})}{C}) = \R$.

Unlike the classical case for $\R$-valued random variables which only amounts to use the simple fact $\frac{\dd^n}{\dd \lambda^n}e^{\mathrm{i}\lambda x} = (\mathrm{i}x)^n e^{\mathrm{i}\lambda x}$, finding a proper upper bound $B(n)$ in \eqref{eq:pcf-derivative-bound} becomes highly non-trivial when $M \in \lin(\R^d, \fu(k))$ for $k \ge 2$ because of the non-commutativity of the matrix group $U(k)$, even when the radius of convergence $r_2(\bX_{[0, T]})$ is infinite. The main contribution of this paper is to find an explicit formula for all high-order derivatives $ \frac{\dd^n}{\dd \lambda^n} Y^{\lambda M}_T$ of unitary developments $Y^{\lambda M}_T$ in $\lambda$ and then estimate their $L^q$-norms (mainly for $q = 1$ or $q = 2$) to get the desired upper bounds $B(n)$ in \eqref{eq:pcf-derivative-bound}. The main results can be summarized as below:
\begin{enumerate}
\item when $\bX_{[0, T]} = X_{[0, T]}$ is a random smooth path, then for $n \ge 1$ and $q \ge 1$ we can choose
\begin{equation*}
 B(n) = \|M\|_{\op}^n \big(\E[\|X\|^{nq}_{1\var, [0, T]}]\big)^{1 / q},
\end{equation*}
whenever this moment is finite. See Corollary \ref{cor:smooth-development-derivative-bound}.
\item when $\bX_{[0, T]}$ is a random continuous geometric $p$-rough path with $p > 2$ such that $\bN(\bX_{[0, T]}) := \sup\{j \ge 0 : \tau_j < T\}$ with $\tau_0 = 0$ and $\tau_{j + 1} := \inf\{t > \tau_j : \|\bX\|_{p\var, [\tau_j, t]} \ge 1\} \wedge T$ satisfies $\E[e^{\eta \bN(\bX_{[0, T]})}] < \infty$ for some $\eta > 0$, then we can choose $B(n)$ for $q = 1$ as

\begin{equation*}
 B(n) = \frac{n!}{r^n}\sqrt{k}e^\eta \E[e^{\eta \bN(\bX_{[0, T]})}],
\end{equation*}
where $r = r(\eta, \bK, p, k, M) > 0$ is a positive number depending on $\eta, p, k, M$ and bounded subset $\bK \subset \R$ (such that \eqref{eq:pcf-derivative-bound} holds for all $\lambda \in \bK$), important examples include some Gaussian processes and Markovian rough paths generated by Dirichlet forms, see Proposition \ref{prop:greedy-development-derivative-bound}.
\item $\bX_{[0, T]} = X_{[0, T]}$ is an It\^o semimartingale with $X_t = x + \int_0^t b_s \dd s + \int_0^t H_s \dd W_s$ (where $W$ is a Brownian motion) on a deterministic time horizon $[0, T]$, then we can choose $B(n)$ for $n \ge 1$ and $q = 2$ as

\begin{equation*}
 B(n) = 4^n (C(||M||_{\op} \vee 1)^2)^n \eta_T^{\frac{n}{2}}(1 + \eta_T)^{\frac{n}{2}}\sqrt{n!}
\end{equation*}
for some constant $C = C(k, d)$ independent of $\lambda$ and $\eta_T := \int_0^T (\|b_s\|_{L^\infty} + \|H_s\|^2_{L^\infty}) \dd s < \infty$, see Corollary \ref{cor:ito-semimartingale-derivative-bound}.
\item $\bX_{[0, T]} = X_{[0, \tau_\D]}$ is a Brownian motion started at $x \in \D$ and stopped up to the first exit time $\tau_{\D}$ from a regular bounded subset $\D \subset \R^d$, then we can choose $B(n)$ for $n \ge 1$ and $q = 1$ as

\begin{equation*}
 B(n) = n!e^3\sqrt{\E_x[e^{\alpha \tau_\D}]}2^n (C(\|M\|_{\op} \vee 1)^2)^{n}\bigg(\sqrt{\frac{2}{\alpha}}\bigg)^n
\end{equation*}
for some constant $C = C(k, d)$ independent of $\lambda$, where $\alpha \in (0, \mu_1(\D))$ for $\mu_1(\D)$ being the principal Dirichlet eigenvalue of $ - \frac{1}{2}\Delta$ on the bounded domain $\D$, see Theorem \ref{thm:stopped-brownian-derivative-bound}. As a consequence, we give a (partial)\footnote{See Remark \ref{rem:roc-class-determinacy} to see why it is a ``partial'' answer.} affirmative answer to an open question raised in \cite{LyonsHao2015}, namely whether the distribution $\mu_{S(X)_{0, \tau_\D}}$ is determined by the expected signature $\ESig(X_{[0, \tau_\D]})$ of stopped Brownian motion, see Corollary \ref{cor:stopped-brownian-signature-determinacy}.
\end{enumerate}
For derivative order $n = 0$, one can always take $B(0) = \sqrt{k}$ because $Y_T^{\lambda M}$ is unitary.

In fact, in \cite[Section 6.2]{ChevyrevLyons2016} the authors has studied Problem \ref{prob:expected-signature-determinacy} and applied a very functional-analytical approach: they managed to find conditions on $\bX_{[0, T]}$ or $S(\bX)_{0, T}$ such that the Bochner integral/expectation $\E[Y^{zM}_T] = \phi_{\bX_{[0, T]}}(zM)$ of the development $z \in \C \mapsto Y^{zM}_T$ which can be viewed as a random variable taking values in the holomorphic function space, is well-defined and also remains holomorphic in some complex neighbourhood $D_\C(\lambda, \varepsilon)$ around each $\lambda \in \R$, with $\varepsilon > 0$ allowed to depend on $\lambda$ and $M$. Interestingly, in this paper we use quite different approach to get the same conclusions for the first two cases of random rough paths under the same condition mentioned in \cite[Theorem 6.13]{ChevyrevLyons2016}. On the other hand, the approach in \cite{ChevyrevLyons2016}, which is heavily based on an application of greedy sequences, cannot be directly used to solve the cases of stopped Brownian motion, while by using the $L^1$-bounds for high order derivatives of unitary developments obtained in this paper one can show that the distribution $\mu_{S(X)_{0, \tau_\D}}$ is determined by the expected signature $\ESig(X_{[0, \tau_\D]})$ via a simple induction proof, which may be easier accessible to non-experts in signature and rough path theory.


Besides the above theoretical contributions, our theoretical results also motivate augmenting the path characteristic function (PCF) $\phi_{\bX_{[0, T]}}( \cdot )$ with its derivatives as complementary features for characterising probability laws on path space. Whereas the PCF captures distributional information through function value, its derivatives encode local sensitivity to perturbations of the development parameters and may reveal discrepancies that are less visible to the PCF alone. This yields a richer and theoretically controlled representation for stochastic processes, with potential applications to two-sample testing, distribution-shift detection, and the training and evaluation of generative models for synthetic time series, naturally extending the PCF-GAN framework \cite{PCFGAN2023}.


\begin{remark}\label{rem:roc-class-determinacy}
In this paper, we will find various conditions for various random continuous geometric rough paths $\bX_{[0, T]}$ to get feasible upper bounds $B(n)$ for the $L^q$-norms of $n$-th derivative of $Y^{\lambda M}_T$ for all $n \ge 0$, all of which lead to the same \textbf{Conditions (ROC)}\footnote{ROC means Radius Of Convergence.}, see conditions (1) and (2) in Proposition \ref{prop:roc-determinacy-criterion},  under which the distribution $\mu_{S(\bX)_{0, T}}$ of its signature is determined by its expected signature $\ESig(\bX_{[0, T]})$. This actually means that under \textbf{Conditions (ROC)}, we can reconstruct $\mu_{S(\bX)_{0, T}}$ completely from $\ESig(\bX_{[0, T]})$, or in other words, if there is another random continuous geometric rough path $\bY_{[0, T]}$ which \textbf{also satisfies} \textbf{Conditions (ROC)} and $\ESig(\bX_{[0, T]}) = \ESig(\bY_{[0, T]})$, then it must hold that $\mu_{S(\bX)_{0, T}} = \mu_{S(\bY)_{0, T}}$. However, at this stage, we cannot exclude the possibility that there exists a random continuous geometric rough path $\bY_{[0, T]}$ which violates \textbf{Conditions (ROC)} such that $\ESig(\bX_{[0, T]}) = \ESig(\bY_{[0, T]})$ and $\mu_{S(\bX)_{0, T}} \neq \mu_{S(\bY)_{0, T}}$. This was already observed in \cite[Remark 6.19]{ChevyrevLyons2016}, and explains why we said that we only partially solve the open question on stopped Brownian motion raised in \cite{LyonsHao2015}.
\end{remark}
The paper is organized as follows: We collect some preliminaries on Lie algebra/group theory and signature and rough path theory, in particular, definitions on expected signature and path characteristic function, in Section \ref{sec:rough-path-preliminaries}, which will be used later. In Section \ref{sec:development-derivatives} we derive an explicit formula for high-order derivatives of unitary developments. We establish feasible $L^q$-bounds (mainly for $q = 1$ and $q = 2$) of these derivatives for stochastic processes with smooth sample paths in Subsection \ref{sec:smooth-and-bv-paths}. We treat random geometric $p$-rough paths with $p > 2$ whose random number $\bN(\bX_{[0, T]})$ associated with greedy sequences has an exponential tail in Subsection \ref{sec:greedy-sequence-bounds}. We treat continuous semimartingales on deterministic time horizon in Subsection \ref{sec:deterministic-horizon-semimartingales} and stopped Brownian motion in Subsection \ref{sec:stopped-brownian-motion}. Some auxiliary results and proofs will be contained in Appendix.

\textbf{Notations}: We equip $V = \R^d$ with the usual Euclidean $\ell^2$-norm $| \cdot |$ and equip $\C^{k \times k}$ (the space of $k \times k$-complex matrices) with the Hilbert-Schmidt norm $\| \cdot \|_{\HS}$. That is, for $A = (A^{ij})_{i, j = 1}^k \in \C^{k \times k}$, $\|A\|_{\HS} = (\sum_{i, j = 1}^k |A^{ij}|^2)^{\frac{1}{2}} = \sqrt{\text{Tr}(A A^\dagger)}$, where $A^\dagger$ is the conjugate transpose of $A$ and $\text{Tr}$ denotes the usual trace operator. Note that $(\C^{k \times k}, \| \cdot \|_{\HS})$ is a Hilbert space and also a Banach algebra in the sense that $\|AB\|_{\HS} \le \|A\|_{\HS}\|B\|_{\HS}$ for all $A, B \in \C^{k \times k}$. For a matrix $A \in \C^{k \times k}$, we use both $A^{ij}$ and $A_{ij}$ for its $(i, j)$-entry, depending on the notation context.

For a random variable $X$ defined on a probability space $(\Omega, \cF, \bbP)$ taking values in some topological space $\cX$ equipped with its Borel $\sigma$-algebra, we use $\mu_X$ to denote the distribution of $X$ on $\cX$, that is, $\mu_X = X_\sharp \bbP$ is the push-forward probability measure of $\bbP$ under $X : \Omega \to \cX$.

Unless otherwise stated, every path $t \mapsto X_t$ is defined on $[0, T]$ for some finite number $T > 0$, and we will sometimes write $X_{[0, T]}$ or simply $X$ for such a path. Note that if $X_{[0, T]}(\omega)$ is a random path, the time $T = T(\omega)$ can also be random. For $p \ge 1$, we use $C^{p\var}([0, T], W)$ to denote the space of all continuous $p$-variation paths in a Banach space $W$, that is, $Y \in C^{p\var}([0, T], W)$ iff $\|Y\|_{p\var;[0, T]}^p = \sup_{\cD \subset [0, T]}\sum_{t_j \in \cD} \|Y_{t_{j + 1}} - Y_{t_j}\|^p < \infty$, where the supremum is taken over all possible dissections $\cD$ of $[0, T]$. The notation $\|Y\|_{p\var;[s, t]}$ for $[s, t] \subset [0, T]$ is then self-explanatory.

For $n \ge 1$ and $0 \le s \le t \le T$, let $\Delta_{(s, t)}^n = \{(t_1, \ldots, t_n) : s \le t_1 < \ldots < t_n \le t\}$ be the $n$-dimensional simplex over $[s, t]$ and write $\Delta_{t}^n = \Delta_{(0, t)}^n$ for $t \in [0, T]$. For rough-path increments we use the closed simplex $\overline{\Delta_T^2} = \{(s, t) : 0 \le s \le t \le T\}$.

For two Banach spaces $V$ and $W$, we use $\lin(V, W)$ to denote the space of all continuous linear mappings from $V$ to $W$. If $M \in \lin(V, W)$, then $\|M\|_{\op}$ denotes its operator norm, i.e., $\|M\|_{\op} = \sup_{v \in V, \|v\| = 1} \|M(v)\|$. We will always equip $\lin(V, W)$ with this operator norm. Moreover, we use $W^\prime$ to denote the dual of $W$, i.e., the space of all continuous $\R$-valued linear functionals on $W$.

For two reals $a, b \in \R$, we write $a \wedge b = \min\{a, b\}$ and $a \vee b = \max\{a, b\}$.

\section{Preliminaries on the rough path theory}\label{sec:rough-path-preliminaries}

\subsection{Basic notions in rough path theory}\label{sec:rough-path-definitions}
For notational simplicity, in the present paper we also use $V$ for $\R^d$. We use $T(V) = \oplus _{n = 0}^\infty V^{ \otimes n}$ to denote the tensor algebra over $V$ and use $T((V)) = \prod_{n = 0}^\infty V^{ \otimes n}$ for the completed tensor algebra. For $x \in T((V))$ we write $x = (x^0, x^1, \ldots, x^n, \ldots)$ with $x^n \in V^{ \otimes n}$. For each $N \in \N$, $T^N(V) = \oplus _{n = 0}^N V^{ \otimes n}$ denotes the truncated tensor algebra at level $N$. The spaces $T(V), T((V)), $ and $T^N(V)$ are algebras with respect to the tensor product $ \otimes $ as the multiplication operation, see \cite[Chapter 7]{Friz2010}.

For each $n \ge 1$, we endow $V^{ \otimes n}$ the projective norm $\| \cdot \|_{ \otimes n}$ induced by the norm $| \cdot |$ on $V$, that is, for $x^n \in V^{ \otimes n}$, one has $\|x^n\|_{ \otimes n} = \inf \{\sum_{m = 1}^{N} |z^1_m|\ldots |z^n_m|\}$, where the infimum is taken over all possible representations $x^n = \sum_{m = 1}^N z^1_m \otimes \ldots \otimes z^n_m$ with $z^i_m \in V$ for all $i, m$. Given a vector $x \in T((V))$, we define $R(x)$ to be the radius of convergence for the series $\sum_{n = 0}^\infty\|x^n\|_{ \otimes n}\lambda^n$, and simply call it the radius of convergence of $x$. We set $R(x) = 0$ if the above series does not converge for any $\lambda > 0$.

For $p \ge 1$ and $T > 0$, the space of $p$-rough paths $\Omega_p(V)$ is the collection of all continuous maps $\bX = \bX_{[0, T]} : \overline{\Delta_T^2} \to T^{\lfloor p \rfloor}(V)$ such that there exists a control function $w( \cdot , \cdot )$ in the sense of \cite[Definition 1.6]{Friz2010} and
\begin{itemize}
\item $\bX^0_{s, t} = 1$ and $\bX_{s, u} \otimes \bX_{u, t} = \bX_{s, t}$ for $0 \le s \le u \le t \le T$.
\item for all $(s, t) \in \overline{\Delta_T^2}$,

\begin{equation}
\label{eq:rough-path-factorial-control}
 \sup_{1 \le n \le \lfloor p \rfloor} \bigg( (n / p)!\gamma_p\|\bX^n_{s, t}\|_{ \otimes n}
 \bigg)^{\frac{p}{n}} \le w(s, t)
\end{equation}
for some constant $\gamma_p$ only depending on $p$.
\end{itemize}
We can also view $\bX \in \Omega_p(V)$ as $t \mapsto \bX_{0, t}$, and write $\bX_t = \bX_{0, t}$ because $\bX_{s, t} = \bX_{0, s}^{ - 1} \otimes \bX_{0, t}$. Given $\bX \in \Omega_p(V)$, its $p$-variation norm is defined as
\begin{equation*}
\|\bX\|^p_{p\var;[0, T]} := \sum_{n = 1}^{\lfloor p \rfloor} \sup_{\cD \subset [0, T]}\sum_{t_j \in \cD}
 \|\bX^n_{t_j, t_{j + 1}}\|_{ \otimes n}^{\frac{p}{n}}.
\end{equation*}
This $p$-variation norm induces the $p$-variation metric/topology on $\Omega_p(V)$:
\begin{equation*}
\|\bX;\bY\|^p_{p\var;[0, T]} := \sum_{n = 1}^{\lfloor p \rfloor} \sup_{\cD \subset [0, T]}\sum_{t_j \in \cD}
 \|\bX^n_{t_j, t_{j + 1}} - \bY^n_{t_j, t_{j + 1}}\|_{ \otimes n}^{\frac{p}{n}}
\end{equation*}
for $\bX, \bY \in \Omega_p(V)$.

By the Lyons extension theorem, see \cite[Theorem 3.1.2]{Lyons2002}, for each $\bX \in \Omega_p(V)$, there exists a unique lift $S(\bX) : \overline{\Delta_T^2} \to T((V))$ such that $S(\bX)^0_{s, t} = 1$ and $S(\bX)_{s, u} \otimes S(\bX)_{u, t} = S(\bX)_{s, t}$ for $0 \le s \le u \le t \le T$ and \eqref{eq:rough-path-factorial-control} is satisfied with the same control function $w$ and with $\sup_{1 \le n \le \lfloor p \rfloor}$ replaced by $\sup_{n \ge 1}$. We will call this lift $S(\bX)$ the \textit{signature path} of $\bX$ and call $S(\bX)_{0, T}$ the \textit{signature} of $\bX$. A remarkable property of signature is the so-called Chen's relation (see \cite[Theorem 2.9]{Lyons2007}). Below we will need a special case of Chen's relation, namely for any partition $0 = \tau_0 < \tau_1 < \ldots < \tau_N < \tau_{N + 1} = T$ of $[0, T]$ and any $\bX_{[0, T]} \in \Omega_p(V)$, one has $ S(\bX)_{0, T} = \prod_{j = 0}^N S(\bX_{[\tau_j, \tau_{j + 1}]}) = S(\bX_{[0, \tau_1]}) \otimes \ldots \otimes S(\bX_{[\tau_N, T]}) $.

In fact, the signature path $S(\bX)$ takes values in a proper subspace of $T((V))$. Following \cite{ChevyrevLyons2016} we use $E$ to denote the following linear subspace of $T((V))$:
\begin{equation}
\label{eq:entire-tensor-algebra}
 E = \bigg\{x \in T((V)) : \forall \lambda \ge 0, \sum_{n = 0}^\infty \lambda^n \|x^n\|_{ \otimes n} < \infty \bigg\}.
\end{equation}
In view of \cite{ChevyrevLyons2016}, the space $E$ equipped with the seminorms $\{\exp(m| \cdot |) : m \in \N\}$ is a complete locally convex algebra, where $\exp(m| \cdot |)(x) := \sum_{n = 0}^\infty m^n \|x^n\|_{ \otimes n}$. Note that a sequence of vectors $x(m), m \ge 1$ converges to another $x$ in $E$ iff for all $\lambda \in \R$, $\lim_{m \to \infty} \sum_{n = 0}^\infty |\lambda|^n \|x(m)^n - x^n\|_{ \otimes n} = 0$. Moreover, for all $x \in E$ we have $R(x) = \infty$. Thanks to the factorial decay condition \eqref{eq:rough-path-factorial-control}, we have $S(\bX)_{s, t} \in E$ for all $(s, t) \in \overline{\Delta_T^2}$. Moreover, it is well known that the signature evaluation mapping $\cI^p_t : \Omega_p(V) \to E, \bX \mapsto S(\bX)_{0, t}$ is (locally Lipschitz) continuous when $\Omega_p(V)$ is endowed with the $p$-variation topology and $E$ equipped with the seminorms $\{\exp(m| \cdot |) : m \in \N\}$ as above, see \cite[Corollary 5.5]{ChevyrevLyons2016}, \cite[Theorem 3.1.3]{Lyons2002} or \cite[Theorem 9.10]{Friz2010}.

If $X \in \Omega_1(V)$, then it is clear that the condition \eqref{eq:rough-path-factorial-control} ensures that $X \in C^{1\var}([0, T], V)$ is of bounded variation. In this case it is well known that for each $n \ge 2$,
\begin{align}
\label{eq:bv-signature-integrals}
 S(X)_{s, t}^n & = \int_{\Delta_{(s, t)}^n} \dd X_{s_1} \otimes \cdots \otimes \dd X_{s_n} \nonumber\\
 & = \sum_{i_1, \ldots, i_n = 1}^d \int_{\Delta_{(s, t)}^n} \dd X^{i_1}_{s_1} \cdots \dd X^{i_n}_{s_n} e_{i_1} \otimes \cdots \otimes e_{i_n} \in V^{ \otimes n},
\end{align}
where $e_1, \ldots, e_d$ are canonical basis of $V = \R^d$. Thus, given a $p \ge 1$, we have $S_{\lfloor p \rfloor}(X) := (1, X, \ldots, S(X)^{\lfloor p \rfloor})$ is a $p$-rough path in $\Omega_p(V)$. We now define the space of geometric $p$-rough paths $G\Omega_p(V)$ as the closure of $\Omega_1(V)$ in $\Omega_p(V)$ in the $p$-variation topology through the mapping $S_{\lfloor p \rfloor}$.

\begin{definition}
For $p \ge 1$,  the space of geometric $p$-rough paths $G\Omega_p(V)$ is the closure of $\Omega_1(V)$ in $\Omega_p(V)$ in the $p$-variation topology through the mapping $S_{\lfloor p \rfloor}$. That is, $\bX \in G\Omega_p(V)$ iff $\bX \in \Omega_p(V)$ and there exists a sequence of bounded variation paths $X(n) \in \Omega_1(V), n \ge 1$ such that $\lim_{n \to \infty} \|S_{\lfloor p \rfloor}(X(n)); \bX \|_{p\var;[0, T]} = 0$.
\end{definition}
We endow $G\Omega_p(V)$ with the subspace topology from $\Omega_p(V)$. Thanks to the continuity of signature lift $S( \cdot )$ on $\Omega_p(V)$, if a sequence of bounded variation paths $X(n) \in \Omega_1(V), n \ge 1$ satisfies that $\lim_{n \to \infty} \|S_{\lfloor p \rfloor}(X(n)); \bX \|_{p\var;[0, T]} = 0$, then $\lim_{n \to \infty}S(X(n))_{s, t} = S(\bX)_{s, t}$ in $E$ for all $(s, t) \in \overline{\Delta_T^2}$.

\subsection{Preliminaries on Rough Differential Equation}\label{sec:rough-differential-equations}
In this subsection we will collect some basic notions and results on rough differential equations (RDE) driven by geometric $p$-rough paths which will be used later. Here we will adopt the approach from \cite{Friz2010}, other equivalent methodologies on solving RDE can be found in e.g. \cite{Lyons1998}, \cite{Davie2008}, \cite{Gubinelli2004}, or \cite{Friz2014}.

As before, let $V = \R^d$, and let $W$ be another Banach space (in the present paper, we usually choose $W = \R^{k \times k}$). Further, let $\bV : W \to \lin(V, W)$ be a given vector field, and $\bX \in G\Omega_p(V)$ be a geometric $p$-rough path. Following \cite[Definition 10.17]{Friz2010}, we give the definition of solution to the rough differential equation
\begin{equation}
\label{eq:rough-differential-equation}
 dY_t = \bV(Y_t)\dd \bX_t, \quad Y_0 = y_0
\end{equation}
driven by $\bX$ along $\bV$ as follows:
\begin{definition}\label{def:rough-differential-equation-solution}
We say that a path $Y \in C^{p\var}([0, T], W)$ is a solution to the RDE $dY_t = \bV(Y_t)d\bX_t$ with initial value $Y_0 = y_0 \in W$, if the following condition hold: there exists a sequence of bounded variation paths $X(n) \in \Omega_1(V), n \ge 1$ such that $\lim_{n \to \infty} \|S_{\lfloor p \rfloor}(X(n)); \bX \|_{p\var;[0, T]} = 0$ and for each $n$, the usual ordinary differential equation
\begin{equation*}
dY(n)_t = \bV(Y(n)_t)\dd X(n)_t, \quad Y(n)_0 = y_0
\end{equation*}
driven by $X(n)$ admits a solution $Y(n)$, so that $Y(n) \to Y$ uniformly on $[0, T]$.
  \end{definition}

If the vector field $\bV$ is regular enough, then the RDE $dY_t = \bV(Y_t)d\bX_t, Y_0 = y_0$ is well-posed, and it admits a continuous flow in $\bV$, $y_0$ and $\bX$. We will use $\Lip^\gamma$ (with $\gamma > 0$) to denote the $\gamma$-Lipschitz function space whose members are $m$-times continuously differentiable, where $m = \lceil\gamma\rceil - 1$, and the $m$-th derivative is $(\gamma - m)$-H\"older continuous. This space is a Banach space with the norm $\| \cdot \|_{\Lip^\gamma}$ which controls the supremum norms of all derivatives up to the $m$-th order and the H\"older seminorm of the $m$-th derivative, see \cite[Definition 10.2]{Friz2010}.

\begin{theorem}\label{thm:rde-well-posedness}
If $\bX \in G\Omega_p(V)$ and $\bV \in \Lip^\gamma(W, \lin(V, W))$ with $\gamma > p$, then for any $y_0 \in W$, the RDE \eqref{eq:rough-differential-equation} admits a unique solution $Y = Y^{(\bX, \bV, y_0)}$ in the sense of Definition \ref{def:rough-differential-equation-solution}, whose $p$-variation satisfies

\begin{equation}
\label{eq:rde-variation-bound}
 \|Y\|_{p\var} \le C(\|\bV\|_{\Lip^\gamma}\|\bX\|_{p\var} \vee \|\bV\|_{\Lip^\gamma}^p\|\bX\|_{p\var}^p),
\end{equation}
for a constant $C = C(p, \gamma)$ only depending on $p$ and $\gamma$. Moreover, for $\bX_i \in G\Omega_p(V)$, $\bV_i \in \Lip^\gamma(W, \lin(V, W))$ with $\gamma > p$ and $y^i_{0} \in W$, let $Y^i$ denote the solution to RDE \eqref{eq:rough-differential-equation} according to $\bX_i, \bV_i, $ and $y^i_0$ for $i = 1, 2$ respectively, then one has

\begin{equation}
\label{eq:rde-stability-bound}
 \|Y^{1} - Y^{2}\|_{p\var} \le C_1C_2 (\|\bX_1;\bX_2\|_{p\var} + \|\bV_1 - \bV_2\|_{\Lip^\gamma} + \|y^1_0 - y^2_0\|),
\end{equation}
where $C_1 = C_1(\gamma, p)$ depends on $\gamma$ and $p$, $C_2$ can be chosen uniformly when $\max_{i = 1, 2}\{\|\bX_i\|_{p\var}, \|\bV_i\|_{\Lip^\gamma}\}$ stays bounded.

\end{theorem}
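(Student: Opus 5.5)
This is the standard well-posedness and stability theory for RDEs in the sense of Definition \ref{def:rough-differential-equation-solution}, and I would follow the route of \cite[Chapter 10]{Friz2010} rather than build anything new. Throughout I fix $\gamma > p$, $\bX \in G\Omega_p(V)$ and $\bV \in \Lip^\gamma$.

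\textbf{Step 1: uniform a priori bounds for ODEs.} Take a bounded-variation approximating sequence $X(n)$ with $S_{\lfloor p \rfloor}(X(n)) \to \bX$ in $p$-variation. Each ODE $dY(n) = \bV(Y(n))\,\dd X(n)$ is classically well posed, since $\bV$ is Lipschitz.

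The key estimate is a Davie-type local expansion. On any interval $[s,t]$ one has
\[
 Y(n)_{s,t} = \sum_{m=1}^{\lfloor p \rfloor} \bV^{[m]}(Y(n)_s)\, S(X(n))^m_{s,t} + R_{s,t},
\]
where $\bV^{[m]}$ denotes the iterated vector-field derivatives. The remainder satisfies
\[
 |R_{s,t}| \le C\bigl(\|\bV\|_{\Lip^\gamma}\|S_{\lfloor p \rfloor}(X(n))\|_{p\var,[s,t]}\bigr)^{\theta}, \qquad \theta = \gamma/p > 1 .
\]
To prove this, I would compare $Y(n)$ with the flow of the log-ODE driven by $\log_{\lfloor p \rfloor}$ of the signature increment. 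The error is then bounded by a sewing/Davie induction on the number of partition points, deleting one point at a time (Lyons' neo-classical argument). This is the main technical step and where $\gamma > p$ is used: it is exactly what makes $\theta > 1$, so that the sewing-type argument converges.

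Summing the local expansion over a greedy partition in which $\|\bV\|_{\Lip^\gamma}\|\bX\|_{p\var}$ is at most $1$ on each piece gives \eqref{eq:rde-variation-bound}. There are two regimes. In the small-norm regime the bound is linear in $\|\bV\|_{\Lip^\gamma}\|\bX\|_{p\var}$. Otherwise the number of pieces is at most $(\|\bV\|_{\Lip^\gamma}\|\bX\|_{p\var})^p$, which yields the $p$-th power.

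\textbf{Step 2: stability for ODEs, uniformly in the driver.} For two bounded-variation drivers, vector fields and initial points, I would prove \eqref{eq:rde-stability-bound} with the right-hand side computed on the lifted drivers $S_{\lfloor p \rfloor}(X_i)$. The idea is to run the Davie argument on the difference $Y^1 - Y^2$, or equivalently on the pair $(Y^1, Y^2)$ solving a joint equation.

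This step is where I expect the main obstacle to lie. Lipschitz stability in the rough metric needs one more degree of regularity than existence: a $\Lip^\gamma$ field is only $\Lip^{\gamma-1}$ when differentiated. The difference of vector fields therefore has to be controlled via the constraint $\gamma > p$, using the interpolation trick of \cite[Thm 10.26]{Friz2010}. The constant $C_2$ grows exponentially in the a priori bounds, through a discrete Gronwall step over the greedy partition. This is why it is uniform only while $\|\bX_i\|_{p\var}$ and $\|\bV_i\|_{\Lip^\gamma}$ stay bounded.

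\textbf{Step 3: passage to the limit.} By Step 2, the ODE solutions $Y(n)$ form a Cauchy sequence in $p$-variation, hence also uniformly. Their limit $Y$ is a solution in the sense of Definition \ref{def:rough-differential-equation-solution}, and \eqref{eq:rde-variation-bound} passes to the limit by lower semicontinuity of the $p$-variation.

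Uniqueness also follows from Step 2. If two approximating sequences are interlaced, the corresponding ODE solutions converge to the same limit. Hence any solution obtained along any approximating sequence coincides with $Y$.

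Finally, the stability estimate \eqref{eq:rde-stability-bound} for general geometric rough paths is obtained by applying Step 2 to approximating sequences $X_i(n)$ and letting $n \to \infty$, using continuity of the metric $\|\cdot;\cdot\|_{p\var}$.
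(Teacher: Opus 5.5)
Your plan is correct and follows the same route as the paper. The paper gives no argument of its own and simply cites \cite[Theorems 10.14, 10.26]{Friz2010}: a Davie-type estimate and a Lipschitz estimate for ODEs, both uniform in the driver, which is what you sketch in Steps 1--2, transferred to RDEs by the limiting argument of your Step 3.

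One exponent needs fixing. The remainder in your local expansion must carry an exponent strictly larger than $p$ on the norm. For example, $|R_{s,t}|\le C\bigl(\|\bV\|_{\Lip^\gamma}\|S_{\lfloor p \rfloor}(X(n))\|_{p\var,[s,t]}\bigr)^{\gamma}$ holds if you expand up to level $\lfloor\gamma\rfloor$ as in Davie's lemma; if you stop at level $\lfloor p\rfloor$, use exponent $\min\{\gamma,\lfloor p\rfloor+1\}$ on intervals where this quantity is at most $1$. In terms of the control $\omega=\|S_{\lfloor p\rfloor}(X(n))\|^p_{p\var}$ the former is $\omega(s,t)^{\gamma/p}$, and the requirement $\gamma/p>1$ is where $\gamma>p$ enters. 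With $\theta=\gamma/p$ placed on the norm, as you wrote it, you only get $\omega(s,t)^{\gamma/p^2}$, and the Davie/sewing induction would then need $\gamma>p^2$.
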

For a concrete proof of the above theorem, see \cite[Theorems 10.14, 10.26]{Friz2010}. The above results also hold true when $\bV$ is linear (but with different constants in \eqref{eq:rde-variation-bound} and \eqref{eq:rde-stability-bound}), see \cite[Theorem 10.53]{Friz2010} or \cite[Theorem 1]{linearRDE2010} for the cases of linear vector fields.

\subsection{Basic notions on matrix Lie groups and their Lie algebras}\label{sec:matrix-lie-groups}

The main literature for this subsection is \cite{Hall2015}. Given a $k \ge 1$, let $\GL(k, \C) \subset \C^{k \times k}$ be the general linear group of all $k \times k$ invertible complex matrices, whose group operation is the matrix multiplication. A matrix Lie group $G$ is a closed subgroup of $\GL(k, \C)$ (for some $k \ge 1$), where $\GL(k, \C)$ is equipped with the topology induced by the Hilbert-Schmidt norm on $\C^{k \times k}$.

Given a matrix Lie group $G \subset \GL(k, \C)$, its Lie algebra $\fg$ is the set of all matrices $A \in \C^{k \times k}$ such that $e^{tA} = \sum_{m = 0}^\infty t^m\frac{A^m}{m!}$ belongs to $G$ for all $t \in \R$. It can be shown that $\fg$ is an $\R$-linear subspace of $\C^{k \times k}$, and equipped with the Lie bracket $[A, B]_\Lie = AB - BA$ it becomes a Lie algebra, see e.g. \cite[Sect. 3.3]{Hall2015}.

We use the following matrix Lie groups and their Lie algebras. The Lie group of $k$-order unitary matrices is $U(k) := \{A \in \C^{k \times k} : AA^\dagger = I_k \}$ (where $I_k$ is the identity matrix in $\C^{k \times k}$). Its Lie algebra is $\fu(k) := \{A \in \C^{k \times k} : A + A^\dagger = 0 \}$ consisting of anti-Hermitian matrices. We also use the matrix Lie group of $k$-order orthogonal matrices $O(k) := \{A \in \R^{k \times k} : AA^\dagger = I_k \}$ with its Lie algebra $\so(k) := \{A \in \R^{k \times k} : A + A^\dagger = 0\}$ consisting of anti-symmetric matrices.

If $\fg$ is the Lie algebra of some matrix Lie group $G$, then for every $A \in G$ we can define a linear map
\begin{equation*}
\Ad_A : \fg \to \fg, \quad B \mapsto ABA^{ - 1}.
\end{equation*}
Clearly this $\Ad_A$ is invertible and therefore belongs to $\GL(\fg)$, the space of all automorphisms on $\fg$. The map $\Ad : G \to \GL(\fg), A \mapsto \Ad_A$ is called an Adjoint map. For more details on matrix Lie groups, Lie algebras and adjoint/Adjoint maps, we refer readers to \cite[Sect. 3]{Hall2015}.

\subsection{Expected Signatures and Characteristic Functions of random geometric rough paths}\label{sec:expected-signature-and-pcf}

Most stuff of this subsection is taken from \cite{ChevyrevLyons2016}. Let $(\bA, \| \cdot \|_{\bA})$ be a unital Banach algebra. Due to the universal property of tensor algebra, every linear operator $M \in \lin(V, \bA)$ admits a unique unital algebra homomorphism $\widetilde M : T(V) \to \bA$. For $x = (x^0, x^1, \ldots, x^n, \ldots) \in T((V))$,
\begin{equation*}
 \widetilde M(x) = \sum_{n = 0}^\infty M^{ \otimes n}(x^n)
\end{equation*}
defines its extension whenever the series converges in $\bA$, where $M^{ \otimes 0} : \R \to \bA$ maps $1$ to the unit element $e_{\bA} \in \bA$, and for $n \ge 1$ and $z_1 \otimes \ldots \otimes z_n \in V^{ \otimes n}$, $M^{ \otimes n}(z_1 \otimes \ldots \otimes z_n) = M(z_1)\ldots M(z_n)$. We define $R(\widetilde M(x))$ to be the radius of convergence for the series $\sum_{n = 0}^\infty \|M^{ \otimes n}(x^n)\|_{\bA}\lambda^n$ and call it the radius of convergence of $\widetilde M(x)$. Since for all $n \ge 1$ it obviously holds that $\|M^{ \otimes n}(x^n)\|_{\bA} \le \|M\|_{\op}^n\|x^n\|_{ \otimes n}$, we indeed have the following relation between $R(\widetilde M(x))$ and $R(x)$:
\begin{equation}
\label{eq:linear-map-convergence-radius}
 R(\widetilde M(x)) \geq \frac{R(x)}{\|M\|_{\op}}\quad\text{if }M \ne 0, \qquad R(\widetilde 0(x)) = \infty.
\end{equation}
Recall the space $E \subset T((V))$ defined as in \eqref{eq:entire-tensor-algebra}. From \cite[Sect. 2]{ChevyrevLyons2016} we know for every linear operator $M \in \lin(V, \bA)$, its extension $\widetilde M : E \to \bA$ restricted to $E$ is continuous. Also note that since $R(x) = \infty$ for all $x \in E$, the relation \eqref{eq:linear-map-convergence-radius} guarantees $R(\widetilde{M}(x)) = \infty$ for all $x \in E$.

Also, as noticed in \cite[Sect. 4]{ChevyrevLyons2016}, for any $k$ if we take $\bA = \C^{k \times k}$ and consider $M \in \lin(V, \fu(k))$, then for any $\R^d$-valued geometric $p$-rough path $\bX \in G\Omega_p(V)$, it holds that $\widetilde M(S(\bX)_t) \in U(k)$ for all $t \in [0, T]$. Similarly, if $M \in \lin(V, \so(k))$, then $\widetilde M(S(\bX)_t) \in O(k)$. Also following \cite{PCFGAN2023}, given an $\bX \in G\Omega_p(V)$  we define
\begin{equation}
\label{eq:unitary-development-signature}
 \Phi_{\bX_{[0, T]}} : \bigsqcup_{k = 1}^\infty \lin(\R^d, \fu(k)) \to \bigsqcup_{k = 1}^\infty \C^{k \times k}, \quad \Phi_{\bX_{[0, T]}}(M) := \widetilde M(S(\bX)_{0, T}),
\end{equation}
Here $\bigsqcup$ denotes the disjoint union over matrix sizes, and each map preserves the matrix size $k$. We call $\Phi_{\bX_{[0, T]}}(M)$ the unitary development of $\bX_{[0, T]}$ under $M$. Moreover, if $\bX_{[0, T]}(\omega)$ is a random geometric $p$-rough path (and we also allow $T(\omega)$ to be  random), then we call the following mapping
\begin{equation*}
 \phi_{\bX_{[0, T]}} : \bigsqcup_{k = 1}^\infty \lin(\R^d, \fu(k)) \to \bigsqcup_{k = 1}^\infty \C^{k \times k}, \quad \phi_{\bX_{[0, T]}}(M) := \E[\Phi_{\bX_{[0, T]}}(M)],
\end{equation*}
namely the expectation of $\omega \mapsto \Phi_{\bX_{[0, T]}(\omega)}(M)$, the \textit{Path Characteristic Function (PCF)} of $\bX_{[0, T]}$. Note that it is the characteristic function of the distribution $\mu_{S(\bX)_{0, T}}$ on $E$ defined in \cite{ChevyrevLyons2016}.

As its name indicates, the most remarkable property of PCF is that it can determine the law of the signature of random geometric $p$-rough paths, which has been proved in \cite[Sect. 4]{ChevyrevLyons2016}: for two random geometric $p$-rough paths $\bX_{[0, T]}$ and $\bY_{[0, T]}$, one has $\mu_{S(\bX)_{0, T}} = \mu_{S(\bY)_{0, T}}$ iff $\phi_\bX(M) = \phi_\bY(M)$ for all $M \in \bigsqcup_{k = 1}^\infty \lin(\R^d, \fu(k))$. In a recent work \cite{RestrictedPCF2024}, the authors showed that the above characteristicness remains valid if one only applies $M \in \bigsqcup_{k = 1}^\infty \lin(\R^d, \so(k))$. For simplicity, from now on we will focus on $\so(k)$-valued linear operator $M$, and summarize the above facts into the following theorem, whose proof can be found in \cite[Theorem 5.3]{RestrictedPCF2024} (see also \cite[Corollary 4.12]{ChevyrevLyons2016} for the cases of using complex matrices).

\begin{theorem}\label{thm:pcf-determines-signature-law}
For two random geometric $p$-rough paths $\bX_{[0, T]}$ and $\bY_{[0, T]}$, one has $\mu_{S(\bX)_{0, T}} = \mu_{S(\bY)_{0, T}}$ if and only if $\phi_{\bX_{[0, T]}}(M) = \phi_{\bY_{[0, T]}}(M)$ for all $M \in \bigsqcup_{k = 1}^\infty \lin(\R^d, \so(k))$.
\end{theorem}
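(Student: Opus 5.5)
The "only if" direction is immediate. If $\mu_{S(\bX)_{0,T}} = \mu_{S(\bY)_{0,T}}$, then for every $M \in \lin(\R^d, \so(k))$ the map $\widetilde M : E \to \C^{k\times k}$ is continuous, hence Borel measurable, and bounded on the signatures: it takes values in $O(k)$, so $\|\widetilde M(\cdot)\|_{\HS} \le \sqrt{k}$. Therefore
\begin{equation*}
\phi_{\bX_{[0,T]}}(M) = \int_E \widetilde M(x)\,\mu_{S(\bX)_{0,T}}(\dd x) = \int_E \widetilde M(x)\,\mu_{S(\bY)_{0,T}}(\dd x) = \phi_{\bY_{[0,T]}}(M).
\end{equation*}

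For the converse I would follow the strategy of \cite[Sect. 4]{ChevyrevLyons2016}. Set $\mathcal{F} = \{ x \mapsto \langle \widetilde M(x) e_i, e_j\rangle : k\ge1,\ M \in \lin(\R^d,\so(k)),\ 1\le i,j\le k\}$, a class of bounded continuous real functions on $E$ restricted to the group-like elements $G \subset E$, which is where the signatures live. I will check three properties of $\mathcal F$.

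First, its linear span is an algebra containing the constants. For $M_1 \in \lin(\R^d,\so(k_1))$ and $M_2 \in \lin(\R^d,\so(k_2))$, the operator $M_1\otimes I + I \otimes M_2$ takes values in $\so(k_1k_2)$. On group-like elements, i.e.\ $S(\bX)$ for a BV path $X$, its development equals $\widetilde M_1(x)\otimes\widetilde M_2(x)$; this follows from the product rule for the linear ODE \eqref{eq:unitary-development-rde}, and extends to $G$ by continuity and density. Matrix coefficients of a Kronecker product are products of matrix coefficients, so products of elements of $\mathcal F$ lie in $\mathcal F$. Constants arise from $M = 0$.

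Second, $\mathcal F$ separates points of $G$. Given $x \ne y$ in $G$, pick a level $n$ with $x^n \ne y^n$. I would construct a nilpotent-type representation and then convert it to an orthogonal one. This is the main obstacle, and I would take it from \cite[Theorem 5.3]{RestrictedPCF2024}, where it is shown that $\so(k)$-developments already separate signatures. The idea is that the map $M \mapsto \widetilde M(x)$, restricted to scaled maps $\lambda M$, is entire in $\lambda$ with Taylor coefficients $M^{\otimes n}(x^n)$. Real orthogonal representations of $\lin(\R^d,\so(k))$ are rich enough that $\{M^{\otimes n}\}$ separates $V^{\otimes n}$; this uses the fact that the free Lie algebra embeds into $\bigoplus_k \so(k)$-valued representations.

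Third, once $\mathcal F$ is a point-separating multiplicative family of bounded continuous functions, equality of the expectations of all $f\in\mathcal F$ forces equality of the laws. This is a Stone–Weierstrass / monotone class argument on the Polish-type (or at least Radon) space of group-like elements, as in \cite[Corollary 4.12]{ChevyrevLyons2016}. Tightness of the single laws, which holds because they are Radon, lets one apply Stone–Weierstrass on compact sets and conclude $\mu_{S(\bX)_{0,T}}=\mu_{S(\bY)_{0,T}}$.
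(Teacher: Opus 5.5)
Your proposal is correct and follows essentially the same route as the paper, which does not prove the statement itself but defers to \cite[Theorem 5.3]{RestrictedPCF2024} for the $\so(k)$ case and to \cite[Corollary 4.12]{ChevyrevLyons2016} for the Stone--Weierstrass/Radon-measure step, the same two ingredients you invoke. The point-separation step you outsource to \cite[Theorem 5.3]{RestrictedPCF2024} can be made self-contained: the realification $A+\mathrm{i}B\mapsto \begin{pmatrix} A & -B\\ B & A\end{pmatrix}$ is an injective unital real algebra homomorphism $\C^{k\times k}\to\R^{2k\times 2k}$ that maps $\fu(k)$ into $\so(2k)$ and commutes with development and with expectation, so the $\so(2k)$-valued path characteristic functions determine the $\fu(k)$-valued ones, and the statement then follows from the unitary result of \cite{ChevyrevLyons2016}.
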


\begin{remark}
For two random geometric $p$-rough paths $\bX_{[0, T]}(\omega)$ and $\bY_{[0, T]}(\omega)$ such that their first projection $X_{[0, T]}(\omega) = \bX^1_{[0, T]}(\omega)$ as well as  $Y_{[0, T]}(\omega) = \bY^1_{[0, T]}(\omega)$ can be expressed as $X_{t}(\omega) = (t, X^1_t(\omega), \ldots, X^d_t(\omega))$ and $Y_{t}(\omega) = (t, Y^1_t(\omega), \ldots, Y^d_t(\omega))$ for all $t \in [0, T]$, then they have the same law ($\mu_{\bX_{[0, T]}} = \mu_{\bY_{[0, T]}}$) if and only if $\phi_{\bX_{[0, T]}}(M) = \phi_{\bY_{[0, T]}}(M)$ for all $M \in \bigsqcup_{k = 1}^\infty \lin(\R^{d + 1}, \so(k))$, because the existence of the time component $t$ ensures the injectivity of the signature mapping, see \cite[Lemma 4.6]{BLGY2016}.
\end{remark}

Besides the (path) characteristic functions for random geometric rough paths defined as above, we also have a notion like ``moments generating functions'' for them, namely the \textit{expected signatures}: given a random geometric $p$-rough path $\bX_{[0, T]}(\omega) \in G\Omega_p(V)$, its expected signature is defined as
\begin{equation*}
\ESig(\bX_{[0, T]}) = (1, \E[S(\bX)_{0, T}^1], \ldots, \E[S(\bX)_{0, T}^n], \ldots) \in T((V)),
\end{equation*}
provided for all $n \ge 1$, $S(\bX)_{0, T}^n \in V^{ \otimes n}$ is Gelfand-Pettis integrable in the sense that there is a vector $x^n \in V^{ \otimes n}$ such that for all $f \in (V^{ \otimes n})^\prime$ it holds $\E[f(S(\bX)_{0, T}^n)] = f(x^n)$, and in this case one sets $\E[S(\bX)_{0, T}^n] := x^n$. Note that in the contrast to PCF which is well-defined for any random geometric $p$-rough paths, not every random geometric $p$-rough path has an expected signature.

If $\ESig(\bX_{[0, T]})$ exists, we write $r_2(\bX_{[0, T]}) = R(\ESig(\bX_{[0, T]}))$ for the radius of convergence of $\ESig(\bX_{[0, T]})$, i.e., the radius of convergence of the series
\begin{equation*}
\sum_{n = 0}^\infty \|\E[S(\bX)_{0, T}^n]\|_{ \otimes n} \lambda^n
\end{equation*}
and write $r_1(\bX_{[0, T]})$  for the radius of convergence of the series
\begin{equation*}
\sum_{n = 0}^\infty \E[\|S(\bX)_{0, T}^n\|_{ \otimes n}] \lambda^n,
\end{equation*}
provided $\E[\|S(\bX)_{0, T}^n\|_{ \otimes n}] < \infty$ for all $n \ge 1$. Clearly, from the definition we see that if $\E[\|S(\bX)_{0, T}^n\|_{ \otimes n}] < \infty$ holds for all $n \ge 1$ then $\ESig(\bX)$ exists, and $r_1(\bX_{[0, T]}) \le r_2(\bX_{[0, T]})$. There is an interesting result proved in \cite[Proposition 3.4]{ChevyrevLyons2016} which said that for random geometric $p$-rough paths, the converse also holds true. This is based on the fact that for geometric $p$-rough path $\bX_{[0, T]}$, its signature $S(\bX)_{0, T}$ takes values in the so-called character group in $E$ related to the Hopf algebra structure induced by the shuffle product, see \cite{ChevyrevLyons2016} for more details on these algebraic properties of signature.

\begin{theorem}[Proposition 3.4 in \cite{ChevyrevLyons2016}]\label{thm:expected-signature-radii}
Assume that $\bX_{[0, T]}(\omega) \in G\Omega_p(V)$ is a random geometric $p$-rough path. Then $\ESig(\bX_{[0, T]})$ exists iff $\E[\|S(\bX)_{0, T}^n\|_{ \otimes n}] < \infty$ for all $n \ge 1$, and one has $r_1(\bX_{[0, T]}) \le r_2(\bX_{[0, T]}) \le 2\sqrt{d}r_1(\bX_{[0, T]})$.
\end{theorem}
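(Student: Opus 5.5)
The plan is to prove the easy inequality and the trivial direction directly, and to get the reverse bound from the shuffle product identity satisfied by signatures of geometric rough paths. Write $x = S(\bX)_{0,T}$ and, for a word $I = (i_1, \ldots, i_n)$, write $x^I = \langle e_{i_1} \otimes \cdots \otimes e_{i_n}, x^n\rangle$ for the coordinates.

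\emph{Trivial direction.} If $\E[\|x^n\|_{\otimes n}] < \infty$, then $x^n$ is Bochner integrable, hence Gelfand--Pettis integrable. Moreover $\|\E[x^n]\|_{\otimes n} \le \E[\|x^n\|_{\otimes n}]$ termwise, which gives $r_1(\bX_{[0,T]}) \le r_2(\bX_{[0,T]})$.

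\emph{Shuffle identity.} For every bounded variation path, Fubini applied to the iterated integrals \eqref{eq:bv-signature-integrals} gives
\[
x^I x^J = \langle e_I \shuffle e_J, x\rangle .
\]
Both sides are continuous functions of $x$ in $E$. So by the definition of $G\Omega_p(V)$ and the continuity of the signature lift, the identity passes to every geometric $p$-rough path.

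\emph{Squares as linear functionals.} Apply the identity with $J = I$ and sum over all $d^n$ words of length $n$. This gives
\[
\sum_{|I| = n} (x^I)^2 = f_n(x^{2n}), \qquad f_n := \sum_{|I| = n} e_I \shuffle e_I ,
\]
so $f_n$ is a fixed linear functional on $(\R^d)^{\otimes 2n}$. I would bound its dual norm with respect to the projective norm by testing on a rank-one tensor $z_1 \otimes \cdots \otimes z_{2n}$. For each of the $\binom{2n}{n}$ shuffles $\sigma$, summing over $I$ pairs up the positions into inner products. The resulting contribution is $\prod_{\text{pairs}} \langle z_a, z_b\rangle$, which is at most $\prod_i |z_i|$. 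Hence
\[
|f_n(y)| \le \binom{2n}{n}\, \|y\|_{\otimes 2n} \le 4^n\, \|y\|_{\otimes 2n} .
\]
Keeping the count of shuffles under control is exactly where the constant $2$ comes from, and I expect this combinatorial estimate to be the main point.

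\emph{Existence of moments.} If $\ESig$ exists, then $f_n(x^{2n})$ is integrable by Gelfand--Pettis integrability, and $\E[f_n(x^{2n})] = f_n(\E[x^{2n}])$.

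\emph{Comparing norms.} Since each $e_I$ has projective norm $1$, the Cauchy--Schwarz inequality over the $d^n$ words gives
\[
\|x^n\|_{\otimes n} \le \sum_I |x^I| \le d^{n/2}\Big(\sum_I (x^I)^2\Big)^{1/2}.
\]
Taking expectations and applying Jensen's inequality yields
\[
\E[\|x^n\|_{\otimes n}] \le d^{n/2}\, \big(f_n(\E[x^{2n}])\big)^{1/2} \le (2\sqrt d)^n\, \|\E[x^{2n}]\|_{\otimes 2n}^{1/2} < \infty .
\]
This proves the equivalence of the two integrability conditions.

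\emph{Radius comparison.} Take any $\rho < r_2(\bX_{[0,T]})$. Then $\|\E[x^{2n}]\|_{\otimes 2n} \le C\rho^{-2n}$ for some constant $C$, so
\[
\E[\|x^n\|_{\otimes n}]\,\lambda^n \le \sqrt{C}\,\big(2\sqrt d\,\lambda/\rho\big)^n ,
\]
which is summable whenever $\lambda < \rho/(2\sqrt d)$. Letting $\rho \uparrow r_2(\bX_{[0,T]})$ gives $r_1(\bX_{[0,T]}) \ge r_2(\bX_{[0,T]})/(2\sqrt d)$, that is, $r_2(\bX_{[0,T]}) \le 2\sqrt d\, r_1(\bX_{[0,T]})$.
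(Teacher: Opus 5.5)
Your proof is correct and takes essentially the same route as the cited result. The paper does not prove this statement itself; it quotes \cite{ChevyrevLyons2016} and notes only that the result rests on the shuffle/character property of signatures. Your argument matches that route: you write $\sum_{|I|=n}(x^I)^2$ as a linear functional of $x^{2n}$ with projective dual norm at most $\binom{2n}{n}\le 4^n$, and you pay $d^{n/2}$ to pass from the Euclidean norm to the projective norm on $V^{\otimes n}$, which produces exactly the constant $2\sqrt d$.
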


Here we must emphasize again that given a random geometric $p$-rough path $\bX_{[0, T]}(\omega) \in G\Omega_p(V)$, although for every $\omega \in \Omega$, its signature $S(\bX(\omega))_{0, T}$ takes values in $E$ and therefore the series $\sum_{n = 0}^\infty \|S(\bX(\omega))_{0, T}^n\|_{ \otimes n}\lambda^n$ possesses infinite convergence of radius,  in general we do \textbf{not} have $\ESig(\bX_{[0, T]})$ also takes values in $E$ or $r_2(\bX_{[0, T]}) = \infty$. A counterexample is given in \cite{finiteROC2022}: the expected signature of stopped $d$-dimensional Brownian motion up to the first exit time of some $C^{2, \alpha}$-domain with $2 \le d \le 8$ only has a finite radius of convergence.

\subsection{Analyticity of the expectation of random functions}
Theorem \ref{thm:analytic-expectation} below plays an essential role in proving our main results in the present paper.

Let $(\Omega, \cF, \bbP)$ be a probability space and $f(\lambda, \omega) : \C \times \Omega \rightarrow \C$ be a random function with parameter $\lambda \in \C$. Define
\begin{equation*}
F(\lambda) := \E[f(\lambda, \cdot )] = \int_{\omega \in \Omega} f(\lambda, \omega) \bbP(\dd \omega)
\end{equation*}
to be the expectation of $f(\lambda, \omega)$ with respect to $\omega$, as long as $\omega \mapsto f(\lambda, \omega)$ is $\bbP$-integrable for such $\lambda$. We use $D_\C(\lambda_0, r) := \{z \in \C : |z - \lambda_0| < r\}$ to denote the disk which is centred at $\lambda_0 \in \C$ and has a radius of $r > 0$. And we use $f^{(n)} (\lambda, \omega)$ to denote the $n$-th order partial derivative of $f$ with respect to $\lambda$ for $n \ge 0$ with the convention $f^{(0)} = f$.

\begin{theorem} \label{thm:analytic-expectation}
Let $\lambda_0 \in \C$ and $r > 0$. Suppose for every $\omega \in \Omega$, the function $\lambda \mapsto f(\lambda, \omega)$ is analytic on $D_\C(\lambda_0, r)$, and

\begin{equation*}
 \sum_{n = 0}^\infty \frac{\E\left[|f^{(n)}(\lambda_0, \cdot )|\right]}{n!} r^n < \infty.
\end{equation*}
Then, the function $F$ is analytic on $D_\C(\lambda_0, r)$ and the following expansion holds:

\begin{equation*}
 \forall \lambda \in D_\C(\lambda_0, r), \quad F(\lambda) = \sum_{n = 0}^\infty \frac{\E\left[ f^{(n)}(\lambda_0, \cdot )\right]}{n!} (\lambda - \lambda_0)^n.
\end{equation*}
\end{theorem}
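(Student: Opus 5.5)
The plan is to prove pointwise convergence of the Taylor series of $F$ by dominating the expectation of the pathwise Taylor remainder and then using Fubini/dominated convergence. For each fixed $\omega$, $\lambda \mapsto f(\lambda, \omega)$ is analytic on $D_\C(\lambda_0, r)$. Hence its Taylor series at $\lambda_0$ converges on the whole disk:
\begin{equation*}
 f(\lambda, \omega) = \sum_{n = 0}^\infty \frac{f^{(n)}(\lambda_0, \omega)}{n!}(\lambda - \lambda_0)^n, \qquad \lambda \in D_\C(\lambda_0, r).
\end{equation*}
This uses the standard fact that the Taylor series of a holomorphic function converges on the largest disk contained in its domain.

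Fix $\lambda$ with $|\lambda - \lambda_0| < r$. The partial sums are bounded in modulus by the random variable
\begin{equation*}
 G(\omega) := \sum_{n = 0}^\infty \frac{|f^{(n)}(\lambda_0, \omega)|}{n!} r^n,
\end{equation*}
since $|\lambda - \lambda_0|^n \le r^n$. By monotone convergence,
\begin{equation*}
 \E[G] = \sum_{n = 0}^\infty \frac{\E[|f^{(n)}(\lambda_0, \cdot)|]}{n!} r^n < \infty.
\end{equation*}
So $G$ is integrable. In particular, each $f^{(n)}(\lambda_0, \cdot)$ is integrable, and $|f(\lambda, \cdot)| \le G$ shows that $F(\lambda)$ is well defined.

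Measurability of $\omega \mapsto f^{(n)}(\lambda_0, \omega)$ needs a short justification. I would get it from Cauchy's integral formula, or from iterated difference quotients of measurable functions. This implicitly assumes $f(\lambda, \cdot)$ is measurable for each $\lambda$, which I take as part of the setting.

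By dominated convergence with dominating function $G$, we may interchange expectation and the series:
\begin{equation*}
 F(\lambda) = \sum_{n = 0}^\infty \frac{\E[f^{(n)}(\lambda_0, \cdot)]}{n!}(\lambda - \lambda_0)^n.
\end{equation*}
The coefficients $a_n := \E[f^{(n)}(\lambda_0, \cdot)]/n!$ satisfy $\sum_n |a_n| r^n \le \E[G] < \infty$. Therefore this power series has radius of convergence at least $r$. A convergent power series defines an analytic function on its disk of convergence, so $F$ is analytic on $D_\C(\lambda_0, r)$ with the stated expansion.

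The only delicate point is the first step. It needs pathwise convergence of the Taylor series on the full disk, which holds because analyticity on $D_\C(\lambda_0, r)$ in the complex sense means holomorphy there. Everything else is an application of monotone and dominated convergence, so I do not expect a real obstacle.
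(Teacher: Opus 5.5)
Your proposal is correct and follows essentially the same route as the paper. Both expand $f(\cdot,\omega)$ pathwise in its Taylor series on $D_\C(\lambda_0,r)$ and use the summability hypothesis to pass the expectation through the series; the paper bounds the tail $|F(\lambda)-G_N(\lambda)|$ by $\sum_{n>N}\E[|f^{(n)}(\lambda_0,\cdot)|]r^n/n!$, which is your dominated-convergence step in another form. You also make explicit two points the paper leaves implicit: the measurability of $f^{(n)}(\lambda_0,\cdot)$, and the fact that $\sum_n |a_n| r^n<\infty$ forces a radius of convergence of at least $r$, which is what actually gives analyticity of $F$.
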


\begin{proof}
Without loss of generality, we assume $\lambda_0 = 0$. For any $|\lambda| < r$ and $N \geq 1$, define the function

\begin{equation*}
 G_N (\lambda) := \sum_{n = 0}^N \frac{\E\left[ f^{(n)}(0, \cdot )\right]}{n!} \lambda^n.
\end{equation*}
Then, for any $|\lambda| < r$,

\begin{align*}
 |F(\lambda) - G_N(\lambda)| & = \left| \E \left[\sum_{n = N + 1}^\infty \frac{f^{(n)}(0, \cdot )}{n!} \lambda^n \right]\right| \\
 & \leq \sum_{n = N + 1}^\infty \frac{\E\left[|f^{(n)}(0, \cdot )|\right]}{n!} r^n.
\end{align*}
The proof is finished by noticing that both sides converge to 0 when $N \to \infty$.
\end{proof}

\begin{proposition}[\textbf{Conditions (ROC)}]\label{prop:roc-determinacy-criterion}
Let $\bX_{[0, T]}(\omega) \in G\Omega_p(V)$ be a random continuous geometric $p$-rough path defined on some probability space $(\Omega, \cF, \bbP)$ and (deterministic or random) time interval $[0, T]$. Assume that $\bX_{[0, T]}(\omega)$ satisfies the following two conditions, which we also call them \textbf{Conditions (ROC)}:
 \begin{enumerate}
\item the expected signature $\ESig(\bX_{[0, T]}) = (1, \E[S(\bX)^1_{0, T}], \ldots, \E[S(\bX)^n_{0, T}], \ldots)$ has a positive radius of convergence $r_2(\bX_{[0, T]}) > 0$ (which is equivalent to $r_1(\bX_{[0, T]}) > 0$, see Theorem \ref{thm:expected-signature-radii}).
\item for every bounded subset $\bK \subset \R$ and every $k \ge 1$ and every $M \in \lin(\R^d, \so(k))$, there exists a positive number $r = r(\bK, M) > 0$ such that the function $\lambda ( \in \R) \mapsto \phi_{\bX_{[0, T]}}(\lambda M) = \E[\widetilde{\lambda M}(S(\bX)_{0, T})]$ is entrywise real analytic in $(\lambda_0 - r, \lambda_0 + r)$ for all $\lambda_0 \in \bK$.
 \end{enumerate}
Then the distribution $\mu_{S(\bX)_{0, T}}$ is determined by $\ESig(\bX_{[0, T]})$ within the class of random geometric rough paths satisfying the Conditions (ROC) as in Remark \ref{rem:roc-class-determinacy}.
\end{proposition}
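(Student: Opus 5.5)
Take two random geometric rough paths $\bX_{[0,T]}$ and $\bY_{[0,T']}$, both satisfying Conditions (ROC), with $\ESig(\bX_{[0,T]}) = \ESig(\bY_{[0,T']})$. By Theorem \ref{thm:pcf-determines-signature-law} it suffices to prove $\phi_{\bX}(M) = \phi_{\bY}(M)$ for every $k \ge 1$ and every $M \in \lin(\R^d, \so(k))$. Fix such $k$ and $M \neq 0$; the case $M = 0$ is trivial, since then both sides equal $I_k$. Set $f(\lambda) := \phi_{\bX}(\lambda M)$ and $g(\lambda) := \phi_{\bY}(\lambda M)$ for $\lambda \in \R$. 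The goal is to show $f \equiv g$ on $\R$ by combining agreement near $0$ with analytic continuation.

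\textbf{Agreement near $0$.} By condition (1) and Theorem \ref{thm:expected-signature-radii}, $r_1 := \min\{r_1(\bX), r_1(\bY)\} > 0$. For $|\lambda| < r_1/\|M\|_{\op}$ we have
\begin{equation*}
 \sum_{n} \E\big[\|(\lambda M)^{\otimes n}(S(\bX)^n_{0,T})\|\big] \le \sum_n |\lambda|^n\|M\|_{\op}^n \E\big[\|S(\bX)^n_{0,T}\|_{\otimes n}\big] < \infty.
\end{equation*}
Fubini therefore allows us to exchange expectation and summation, which gives
\begin{equation*}
 f(\lambda) = \sum_n \lambda^n M^{\otimes n}\big(\E[S(\bX)^n_{0,T}]\big).
\end{equation*}
The exchange $\E[M^{\otimes n}(S^n)] = M^{\otimes n}(\E[S^n])$ is valid because $M^{\otimes n}$ is linear and continuous, so each matrix entry is a continuous linear functional to which the Gelfand--Pettis definition applies. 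The same expansion holds for $g$, so the equality of expected signatures gives $f = g$ on $I_0 := (-\rho, \rho)$ with $\rho = r_1/\|M\|_{\op}$.

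\textbf{Continuation to all of $\R$.} Let $S := \sup\{s > 0 : f = g \text{ on } [0, s)\}$, which satisfies $S \ge \rho$, and suppose $S < \infty$. Apply condition (2) with the bounded set $\bK = [0, S+1]$ to both $\bX$ and $\bY$, and let $r > 0$ be the minimum of the two resulting radii. Choose $\lambda_0 \in (S - r/2, S) \cap (0, S)$. Both $f$ and $g$ are entrywise real analytic on $(\lambda_0 - r, \lambda_0 + r)$, which is an interval, and they coincide on the nonempty open subinterval $(\lambda_0 - r, S) \cap [0,S)$. By the identity theorem for real analytic functions they agree on the whole of $(\lambda_0 - r, \lambda_0 + r)$, an interval reaching past $S + r/2$. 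This contradicts the definition of $S$, so $S = \infty$. The symmetric argument on the negative half-line gives $f = g$ on all of $\R$.

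Since $M$ was arbitrary and $\lambda = 1$ is allowed, $\phi_\bX(M) = \phi_\bY(M)$ for all $M \in \bigsqcup_k \lin(\R^d, \so(k))$, and Theorem \ref{thm:pcf-determines-signature-law} gives $\mu_{S(\bX)_{0,T}} = \mu_{S(\bY)_{0,T'}}$. The main point requiring care is the continuation step. The analyticity radius $r$ depends on the bounded set $\bK$, so one must fix $\bK$ containing the supremum $S$ before extracting a uniform $r$; this is why the argument is phrased via the supremum rather than as a naive induction with a single global radius. The Fubini justification near $0$ is routine once $r_1 > 0$ is in hand.
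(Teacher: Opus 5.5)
Your proof is correct and follows essentially the same route as the paper. You first get agreement of $\lambda \mapsto \phi(\lambda M)$ near $0$ via Fubini and $r_1 > 0$, then extend it to all $\lambda \in \R$ by the identity theorem for real-analytic functions, and finally apply Theorem \ref{thm:pcf-determines-signature-law} at $\lambda = 1$. The one difference is that you carry out the continuation by hand with a supremum argument. The paper instead notes that condition (2) makes both functions real analytic at every point of the connected set $\R$ and applies the identity theorem directly, so the uniformity of $r$ over bounded $\bK$ is not actually needed for this step.
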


\begin{proof}
Let $\bY$ be another random continuous geometric rough path satisfying Conditions (ROC) and $\ESig(\bY) = \ESig(\bX_{[0, T]})$. Fix $k \ge 1$ and a non-zero $M \in \lin(\R^d, \so(k))$. By condition (1), both $r_1(\bX_{[0, T]})$ and $r_1(\bY)$ are positive. For
\begin{equation*}
 |\lambda| < \frac{\min\{r_1(\bX_{[0, T]}), r_1(\bY)\}}{\|M\|_{\op}},
\end{equation*}
absolute integrability of the signature series permits interchange of expectation and summation, giving
\begin{equation*}
 \phi_{\bX_{[0, T]}}(\lambda M)
 = \sum_{n = 0}^{\infty}\lambda^n M^{ \otimes n}\big(\E[S(\bX)^n_{0, T}]\big)
 = \phi_{\bY}(\lambda M).
\end{equation*}
Condition (2) implies that both sides are entrywise real analytic on $\R$. The identity theorem for real-analytic functions therefore gives equality for every $\lambda \in \R$, and in particular for $\lambda = 1$. For $M = 0$ both developments equal $I_k$. Since $k$ and $M$ were arbitrary, Theorem \ref{thm:pcf-determines-signature-law} yields equality of the laws of the signatures.
\end{proof}
Note that from the two conditions in the above proposition one can immediately deduce the conditions (P1) and (P2) in \cite[Definition 6.12]{ChevyrevLyons2016}, which were also used to show that the expected signature of random continuous geometric $p$-rough completely determine the distribution of its signature.

\section{High-order derivatives of unitary development}\label{sec:development-derivatives}
As we have mentioned in the introduction, for a given random continuous geometric rough path $\bX_{[0, T]}$ whose expected signature has a positive radius of convergence, if we can show that for any $k \ge 1$, any $M \in \lin(\R^d, \so(k))$ and any $n \ge 0$, there exist an upper bound $B(n)$ which does not depend on $\lambda \in \R$\footnote{In fact, we only need that $B(n)$ can be chosen uniformly over $\lambda \in \bK$ for any bounded interval $\bK \subset \R$.} and a constant $C = C(k, d, M)$ such that $\|\frac{\dd^n}{\dd \lambda^n}\phi_{\bX_{[0, T]}}(\lambda M)\| = \|\E[ \frac{\dd^n}{\dd \lambda^n} Y^{\lambda M}_T]\| \le \|\frac{\dd^n}{\dd \lambda^n} Y^{\lambda M}_T \|_{L^q} \le B(n)$ for some $q \ge 1$ and $\sum_{n = 0}^\infty \frac{B(n)}{n!}\xi^n$ converges for all $|\xi| < \frac{r_2(\bX_{[0, T]})}{C}$, see \eqref{eq:pcf-derivative-bound}, then we can easily check that Conditions (ROC) in Proposition \ref{prop:roc-determinacy-criterion} are satisfied\footnote{See the proof of Proposition \ref{prop:smooth-path-determinacy}.}, and consequently $\mu_{S(\bX)_{0, T}}$ is determined by $\ESig(\bX_{[0, T]})$. This section is devoted to finding such upper bounds $B(n)$ and constant $C$ for various random continuous geometric rough paths, which can be achieved by estimating the $L^q$-bounds (mainly for $q = 1$ or $q = 2$) of the $n$-th derivative of unitary development $Y^{\lambda M}_T$ in $\lambda$. Thanks to the linear dynamics of $Y^{\lambda M}_T$, see \eqref{eq:unitary-development-rde}, we can readily obtain the equations satisfied by $\frac{\dd^n}{\dd \lambda^n} Y^{\lambda M}_T$ and then apply different techniques to estimate their $L^q$-norms, see the subsections below.

\subsection{$X_{[0, T]}$ is smooth}\label{sec:smooth-and-bv-paths}
In this subsection we will mainly consider smooth paths in $C^\infty([0, T], \R^d)$ for some finite $T > 0$. Let us start with a deterministic smooth path $X_{[0, T]}$. For a given linear operator $M \in \lin(\R^d, \so(k))$ and a $\lambda \in \R$, suppose $Y^\lambda_{[0, T]} = Y^{\lambda M}_{[0, T]}$ is the solution to the following linear controlled differential equation:
\begin{align}
 \label{eq:smooth-development-ode}
 \dd Y^\lambda_t = Y^\lambda_t(\lambda M)(\dd X_t) = Y^\lambda_t(\lambda M) (\dot X_t) \dd t, \quad Y^\lambda_0 = I_k.
\end{align}
We will denote the $n$-th partial derivatives of $Y^\lambda_t$ with respect to $\lambda$ (for fixed $t \in [0, T]$) by
\begin{equation*}
Y^{\lambda, (n)}_t := \frac{\partial^n Y^\lambda_t}{\partial \lambda^n},
\end{equation*}
with the convention that $Y^{\lambda, (0)}_t := Y_t^\lambda$. We will collect some important properties of this path $Y^\lambda_{[0, T]}$ into the next theorem. In particular, we will show that $Y^{\lambda, (n)}_t$ exists for all $n \ge 1$ and derive an explicit formula for these partial derivatives.

\begin{theorem}\label{thm:smooth-development-derivatives}
For a given linear operator $M \in \lin(\R^d, \so(k))$ and a $\lambda \in \R$, let $Y^\lambda_{[0, T]}$ be defined as in \eqref{eq:smooth-development-ode}. Then we have
  \begin{enumerate}
\item For all $t \in [0, T]$, we have $Y^\lambda_t = \widetilde{\lambda M}(S(X)_{0, t}) = \sum_{n = 0}^\infty \lambda^n M^{ \otimes n} (S(X)^n_{0, t}) = \Phi_{X_{[0, t]}}(\lambda M)$.
\item For all $t \in [0, T]$, $Y^\lambda_t \in O(k)$ is an orthogonal matrix.
\item For all $t \in [0, T]$, the mapping $\lambda \to Y^\lambda_t$ is real analytic on $\R$ and can be extended to a holomorphic function on $\C$, and the extended function $Y^z_t = \sum_{n = 0}^\infty z^n M^{ \otimes n} (S(X)^n_{0, t}) = \widetilde{zM}(S(X)_{0, t})$ for $z \in \C$ satisfies the linear differential equation

\begin{equation*}
 \dd Y^z_t = Y^z_t (zM)(\dd X_t), \quad Y^z_0 = I_k.
\end{equation*}
Moreover, for any $n \ge 1$, $\lambda \in \R$, it holds that

\begin{align}
\label{eq:smooth-development-derivative-formula}
 Y^{\lambda, (n)}_t = n! \int_{\Delta^n_t} \Ad_{Y^\lambda_{s_1}}(M(\dot X_{s_1})) &\Ad_{Y^\lambda_{s_2}} (M(\dot X_{s_{2}})) \nonumber \\
 &\cdots \Ad_{Y^\lambda_{s_n}} (M(\dot X_{s_{n}}))\dd s_1 \dd s_2 \cdots \dd s_{n}Y^\lambda_t,
\end{align}
where all concatenations are matrices multiplications and $\Ad$ is the Adjoint map defined as in Subsection \ref{sec:matrix-lie-groups}.
  \end{enumerate}
\end{theorem}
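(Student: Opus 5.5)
Parts (1) and (2) come from the Picard iteration of the linear ODE \eqref{eq:smooth-development-ode}. Iterating $Y^\lambda_t = I_k + \int_0^t Y^\lambda_s (\lambda M)(\dot X_s)\,\dd s$ gives the series $\sum_{n\ge0}\lambda^n\int_{\Delta^n_t} M(\dot X_{s_1})\cdots M(\dot X_{s_n})\,\dd s_1\cdots \dd s_n$. By \eqref{eq:bv-signature-integrals} and the homomorphism property of $M^{\otimes n}$, this series equals $\sum_n \lambda^n M^{\otimes n}(S(X)^n_{0,t})$. It converges absolutely because $\|M^{\otimes n}(S(X)^n_{0,t})\| \le \|M\|_{\op}^n \|X\|_{1\var}^n/n!$. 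Uniqueness for linear ODEs then identifies it with $Y^\lambda_t$, which proves (1). For (2), set $A_s=\lambda M(\dot X_s)\in\so(k)$. Then
\[
\frac{\dd}{\dd t}\bigl(Y^\lambda_t (Y^\lambda_t)^\top\bigr) = Y^\lambda_t (A_t + A_t^\top)(Y^\lambda_t)^\top = 0,
\]
so $Y^\lambda_t\in O(k)$; reality is clear since $M$ takes values in real matrices.

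For (3), the same factorial bound holds with $z\in\C$ in place of $\lambda$. Hence $z\mapsto \sum_n z^n M^{\otimes n}(S(X)^n_{0,t})$ is an entire power series, i.e.\ holomorphic on $\C$, and its restriction to $\R$ is real analytic. Applying the Picard argument again with $zM$ shows that $Y^z$ solves $\dd Y^z = Y^z (zM)(\dd X)$. In particular $Y^{\lambda,(n)}_t$ exists for all $n$ and may be computed by termwise differentiation, or equivalently by differentiating the ODE in $\lambda$.

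To obtain \eqref{eq:smooth-development-derivative-formula}, define $Z^{(n)}_t := Y^{\lambda,(n)}_t (Y^\lambda_t)^{-1}$. Differentiating the ODE $n$ times in $\lambda$ with Leibniz gives
\[
\dot Y^{(n)}_t = Y^{(n)}_t \lambda M(\dot X_t) + n\, Y^{(n-1)}_t M(\dot X_t),\qquad Y^{(n)}_0=0 \ \ (n\ge1).
\]
This is a linear inhomogeneous equation, so variation of constants yields $Y^{(n)}_t = n\int_0^t Y^{(n-1)}_s M(\dot X_s)(Y^\lambda_s)^{-1}\,\dd s\; Y^\lambda_t$. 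To justify this, I would verify that the candidate has the stated derivative, using $\frac{\dd}{\dd t}(Y^\lambda_t)^{-1} = -\lambda M(\dot X_t)(Y^\lambda_t)^{-1}$. Rewriting $Y^{(n-1)}_s = Z^{(n-1)}_s Y^\lambda_s$ turns this into the recursion
\[
Z^{(n)}_t = n\int_0^t Z^{(n-1)}_s\, \Ad_{Y^\lambda_s}\bigl(M(\dot X_s)\bigr)\,\dd s, \qquad Z^{(0)}\equiv I_k.
\]
Induction on $n$ then gives $Z^{(n)}_t = n!\int_{\Delta^n_t}\Ad_{Y^\lambda_{s_1}}(M(\dot X_{s_1}))\cdots\Ad_{Y^\lambda_{s_n}}(M(\dot X_{s_n}))\,\dd s$. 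The factors $n\cdot(n-1)!$ combine to $n!$, and the new variable $s=s_n$ is the largest time, so the order of the product is as stated. Multiplying on the right by $Y^\lambda_t$ gives the formula.

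The main obstacle is bookkeeping in this last step. One must get the order of the noncommutative products right, placing the new factor on the right with the latest time last, and must use $\Ad_{Y}(B)=YBY^{-1}$ with $Y^{-1}=Y^\top$ consistently. One must also justify that the $\lambda$-derivative commutes with the time integral, which follows from the holomorphy established above together with the uniform bounds.
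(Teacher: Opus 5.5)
Your proposal is correct and follows essentially the same route as the paper. You identify $Y^\lambda_t$ with the signature series (the paper cites Lyons's Theorem 4.5, where you carry out the Picard iteration explicitly), check orthogonality by the product rule, and get holomorphy from factorial decay. You then differentiate the ODE in $\lambda$ to obtain the recursion for $Y^{\lambda,(n)}$, conjugate by $(Y^\lambda_t)^{-1}$, and induct. The only differences are cosmetic: you justify the $\lambda$-differentiation via holomorphy and uniform bounds rather than citing smooth dependence of ODE flows on parameters, and you use $(Y^\lambda_t)^{-1}$ with variation of constants instead of $(Y^\lambda_t)^\dagger$ together with $M + M^\dagger = 0$.
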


\begin{proof}
In view of \cite[Theorem 4.5]{Lyons2007}, the unique solution to the linear equation \eqref{eq:smooth-development-ode} is given by
\begin{equation*}
Y^\lambda_t = \sum_{n = 0}^\infty(\lambda M)^{ \otimes n} (S(X)^n_{0, t}) = \sum_{n = 0}^\infty \lambda^n M^{ \otimes n} (S(X)^n_{0, t})
\end{equation*}
where $S(X)^n_{0, t} \in V^{ \otimes n}$ is the $n$-th component of the signature of $X$ defined as in \eqref{eq:bv-signature-integrals}, and therefore is equal to $\widetilde {\lambda M} (S(X)_{0, t})$. Therefore, the solution path $Y^\lambda_{[0, T]}$ of equation \eqref{eq:smooth-development-ode} coincides with the unitary development of $X_{[0, T]}$ under $\lambda M$, i.e., $Y^\lambda_t = \Phi_{X_{[0, t]}}(\lambda M)$. It then follows immediately that $Y^\lambda_t \in O(k)$ as $\lambda M$ takes values in $\so(k)$, see Subsection \ref{sec:expected-signature-and-pcf} or \cite[Sect. 4]{ChevyrevLyons2016}. One can also use the product rule to check $Y^\lambda_t(Y^\lambda_t)^\dagger = I_k$ for all $t \in [0, T]$ directly, which also verifies our second claim.

Next, as we mentioned in Subsection \ref{sec:rough-path-definitions}, the signature $S(X)_{0, t}$ takes values in the space $E$ and therefore has infinite radius of convergence, it follows that the power series $\sum_{n = 0}^{\infty}\lambda^n \|M^{ \otimes n} (S(X)^n_{0, t})\|_{\HS}$ also has infinite radius of convergence as we noticed in Subsection \ref{sec:expected-signature-and-pcf}, which indeed implies that the function $\lambda \mapsto Y^\lambda_t = \sum_{n = 0}^{\infty}\lambda^n M^{ \otimes n} (S(X)^n_{0, t})$ is real analytic on $\R$ (in the sense that every entry $(Y^\lambda_t)^{ij}, i, j = 1, \ldots, k$ is a real analytic function on $\R$), and the absolute convergence of this power series also ensures that it can be extended to a holomorphic function on $\C$. Since $Y^z_t = \sum_{n = 0}^{\infty}z^n M^{ \otimes n} (S(X)^n_{0, t})$ for any  $z \in \C$, using \cite[Theorem 4.5]{Lyons2007} again we see that it is the unique solution to the linear equation $\dd Y^z_t = Y^z_t (zM)(\dd X_t), Y^z_0 = I_k$.

Now let us prove the formula \eqref{eq:smooth-development-derivative-formula} for $Y^{\lambda, (n)}_t$. By the classical smoothness result of ODE flows, see e.g. \cite[Theorem D.6]{LeeSM2013}, we know that the flow to the linear equation \eqref{eq:smooth-development-ode}, $\Theta_M(\lambda, t) := Y^\lambda_t$, is smooth in $(\lambda, t) \in \R \times [0, T]$. Therefore, we are able to take derivatives on both sides of \eqref{eq:smooth-development-ode} with respect to $\lambda$ and by induction obtain
\begin{align}
\label{eq:development-derivative-recursion}
 \dd Y^{\lambda, (n)}_t = \Big[Y^{\lambda, (n)}_t (\lambda M) (\dot X_t) + n Y^{\lambda, (n - 1)}_t M (\dot X_t) \Big] \dd t, \quad Y^{\lambda, (n)}_0 = 0
\end{align}
for all $n \ge 1$. Likewise, we define for $n \ge 0$ a path
\begin{equation*}
Z^{\lambda, (n)}_t := Y^{\lambda, (n)}_t (Y^\lambda_t)^{ - 1} = Y^{\lambda, (n)}_t (Y^\lambda_t)^\dagger.
\end{equation*}
By the product rule and using \eqref{eq:development-derivative-recursion} and \eqref{eq:smooth-development-ode}, we can easily check that
\begin{align*}
 \dd Z^{\lambda, (n)}_t & = \dd Y^{\lambda, (n)}_t (Y^\lambda_t)^\dagger + Y^{\lambda, (n)}_t \dd (Y^\lambda_t)^\dagger \\
 & = \Big[Y^{\lambda, (n)}_t (\lambda M) (\dot X_t) + n Y^{\lambda, (n - 1)}_t M (\dot X_t) \Big] (Y^\lambda_t)^\dagger\dd t \\
 &\quad + Y^{\lambda, (n)}_t (\lambda M^\dagger)(\dot X_t) (Y^\lambda_t)^\dagger \dd t \\
 & = n Y^{\lambda, (n - 1)}_t M (\dot X_t)(Y^\lambda_t)^\dagger\dd t,
\end{align*}
where we used the fact $M(\dot X_t) \in \so(k)$ so that $M(\dot X_t) + M^\dagger(\dot X_t) = 0$ to derive the last equality. Putting everything together, we finally obtain:
\begin{align*}
 Y^{\lambda, (n)}_t = Z^{\lambda, (n)}_t Y^\lambda_t = n \int_0^t Y^{\lambda, (n - 1)}_s M (\dot X_s) (Y^\lambda_s)^\dagger \dd s Y^\lambda_t.
\end{align*}
Then, by induction on $n$, it is not hard to see that
\begin{align*}
Y^{\lambda, (n)}_t & = n \int_0^t Y^{\lambda, (n - 1)}_s M (\dot X_s) (Y^\lambda_s)^\dagger \dd s Y^\lambda_t \\
& = n (n - 1) \int_0^t \int_0^{s_{n}} Y^{\lambda, (n - 2)}_{s_{n - 1}} M (\dot X_{s_{n - 1}}) (Y^\lambda_{s_{n - 1}})^\dagger Y^\lambda_{s_n} M (\dot X_{s_n}) (Y^\lambda_{s_n})^\dagger \dd s_{n - 1} \dd s_n Y^\lambda_t\\
& = \cdots\\
& = n! \int_{\Delta^n_t} \Big[Y^\lambda_{s_1} M(\dot X_{s_1})(Y^\lambda_{s_1})^\dagger \Big] \cdots \Big[Y^\lambda_{s_n} M(\dot X_{s_n})(Y^\lambda_{s_n})^\dagger \Big] \dd s_1 \cdots \dd s_n Y^\lambda_t\\
& = n! \int_{\Delta^n_t} \Ad_{Y^\lambda_{s_1}}(M(\dot X_{s_1}))
 \cdots \Ad_{Y^\lambda_{s_n}} (M(\dot X_{s_{n}}))\dd s_1 \dd s_2 \cdots \dd s_{n}Y^\lambda_t,
\end{align*}
which gives us the formula \eqref{eq:smooth-development-derivative-formula}.
\end{proof}
Since $Y^\lambda_t = \widetilde{\lambda M}(S(X)_{0, t})$, we see that $Y^\lambda_{T} = \Phi_{X_{[0, T]}}(\lambda M)$ is the unitary development of $X_{[0, T]}$ under $\lambda M$, see \eqref{eq:unitary-development-signature}. Furthermore, using the explicit formula \eqref{eq:smooth-development-derivative-formula}, we can easily get a reasonable bound for $\|Y^{\lambda, (n)}_t\|_{\HS}$ for all $n \ge 1$, which is independent of $\lambda$:

\begin{corollary}\label{cor:smooth-development-derivative-bound}
Let $X_{[0, T]}$ be a smooth path. For a given linear operator $M \in \lin(\R^d, \so(k))$ and a $\lambda \in \R$, let $Y^\lambda_{[0, T]}$ be the unitary development under $\lambda M$. Then for all $\lambda \in \R$, $n \ge 1$ and $t \in [0, T]$, it holds that

\begin{equation}
\label{eq:bv-development-derivative-bound}
 \|Y^{\lambda, (n)}_t\|_{\HS} \le \|M\|_{\op}^n \|X\|^n_{1\var, [0, t]}.
\end{equation}
\end{corollary}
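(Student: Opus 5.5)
The natural route is to start from the explicit formula \eqref{eq:smooth-development-derivative-formula} in Theorem \ref{thm:smooth-development-derivatives} and bound the integrand pointwise. Since $Y^\lambda_s\in O(k)$ for all $s$ by part (2) of that theorem, the Adjoint map $\Ad_{Y^\lambda_s}$ acts as conjugation by an orthogonal matrix. This leaves operator and Hilbert--Schmidt norms unchanged: $\|\Ad_{Y^\lambda_s}(A)\|=\|Y^\lambda_s A (Y^\lambda_s)^\dagger\|=\|A\|$. Multiplication on the right by the orthogonal matrix $Y^\lambda_t$ likewise preserves $\|\cdot\|_{\HS}$.

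The plan is therefore as follows. Pull the norm inside the iterated integral and drop the trailing factor $Y^\lambda_t$, which costs nothing in $\|\cdot\|_{\HS}$. Bound the $\HS$-norm of the product $\prod_{j=1}^n \Ad_{Y^\lambda_{s_j}}(M(\dot X_{s_j}))$ by a product of single-factor norms. Each factor satisfies $\|M(\dot X_{s_j})\|\le \|M\|_{\op}|\dot X_{s_j}|$. This gives
\begin{equation*}
\|Y^{\lambda,(n)}_t\|_{\HS}\le n!\,\|M\|_{\op}^n\int_{\Delta^n_t}|\dot X_{s_1}|\cdots|\dot X_{s_n}|\,\dd s_1\cdots\dd s_n .
\end{equation*}
By symmetry of the integrand, the simplex integral equals $\frac{1}{n!}\big(\int_0^t|\dot X_s|\dd s\big)^n=\frac{1}{n!}\|X\|_{1\var,[0,t]}^n$ for a smooth path. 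The factorial cancels and \eqref{eq:bv-development-derivative-bound} follows.

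The one delicate point is which norm to use in the submultiplicative step. If we use $\|\cdot\|_{\HS}$ on every factor, then $\|M\|_{\op}$ must be read as the operator norm from $(\R^d,|\cdot|)$ to $(\C^{k\times k},\|\cdot\|_{\HS})$, as fixed in the Notations. With that reading, $\|AB\|_{\HS}\le\|A\|_{\HS}\|B\|_{\HS}$ suffices and no dimensional constant such as $\sqrt{k}$ appears. The reason no such constant enters is that we bound the increment $Y^{(n)}_t$, which vanishes at $t=0$, rather than $Y_t$ itself. Alternatively, one can bound all but one factor in the matrix operator norm, which is also conjugation invariant, and use $\|AB\|_{\HS}\le\|A\|_{\HS}\|B\|$. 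Either way the estimate is routine once the explicit formula is available, so I expect no real obstacle beyond this bookkeeping and the symmetrization identity for the simplex integral.
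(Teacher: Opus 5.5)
Your proposal is correct and follows the paper's proof essentially step for step. The ingredients are the same:
\begin{itemize}
\item the Hilbert--Schmidt norm is invariant under conjugation by the orthogonal $Y^\lambda_s$ and under right multiplication by $Y^\lambda_t$;
\item the Hilbert--Schmidt norm is submultiplicative, with $\|M\|_{\op}$ read relative to $\|\cdot\|_{\HS}$ on $\so(k)$;
\item the simplex bound $\int_{\Delta^n_t}|\dot X_{s_1}|\cdots|\dot X_{s_n}|\,\dd s_1\cdots\dd s_n \le \|X\|^n_{1\var,[0,t]}/n!$.
\end{itemize}
The paper states this last step as an inequality where you have an equality; the difference is immaterial.
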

\begin{proof}
By Theorem \ref{thm:smooth-development-derivatives} we know that for any $\lambda \in \R$, the path $Y^\lambda_{[0, T]}$ lives in the orthogonal group $O(k)$. Then, since multiplication with orthogonal matrices preserves the Hilbert-Schmidt norm, we must have for any $s \in [0, T]$,
\begin{equation*}
\|\Ad_{Y^\lambda_{s}}(M(\dot X_{s})) \|_{\HS} = \|Y^\lambda_sM(\dot X_s)(Y^\lambda_s)^\dagger\|_{\HS} = \|M(\dot X_s)\|_{\HS},
\end{equation*}
and therefore
\begin{align*}
 \|Y^{\lambda, (n)}_t\|_{\HS} & = n!\bigg\|\int_{\Delta^n_t} \Ad_{Y^\lambda_{s_1}}(M(\dot X_{s_1}))
 \cdots \Ad_{Y^\lambda_{s_n}} (M(\dot X_{s_{n}}))\dd s_1 \dd s_2 \cdots \dd s_{n}Y^\lambda_t \bigg\|_{\HS} \\
 & \le n! \int_{\Delta^n_t} \|\Ad_{Y^\lambda_{s_1}}(M(\dot X_{s_1}))\|_{\HS}
 \cdots \|\Ad_{Y^\lambda_{s_n}} (M(\dot X_{s_{n}}))\|_{\HS}\dd s_1 \dd s_2 \cdots \dd s_{n} \\
 & = n!\int_{\Delta^n_t} \|M(\dot X_{s_1})\|_{\HS}
 \cdots \|M(\dot X_{s_{n}})\|_{\HS}\dd s_1 \dd s_2 \cdots \dd s_{n}\\
 & \le n!\|M\|_{\op}^n \int_{\Delta^n_t} |\dot X_{s_1}|
 \cdots |\dot X_{s_{n}}|\dd s_1 \dd s_2 \cdots \dd s_{n}\\
 & \le n!\|M\|_{\op}^n\frac{\|X\|^n_{1\var, [0, t]}}{n!}\\
 & = \|M\|_{\op}^n \|X\|^n_{1\var, [0, t]},
\end{align*}
and completes the proof.
\end{proof}

Now suppose that $X_{[0, T]}(\omega)$ is a random smooth path defined on a (deterministic or random) time interval $[0, T]$. Fix $k \ge 1$ and $M \in \lin(\R^d, \so(k))$. Recall that its PCF evaluated at $\lambda M$ is given by $\phi_{X_{[0, T]}}(\lambda M) = \E[\Phi_{X_{[0, T]}}(\lambda M)] = \E[Y^\lambda_T]$. Using the above bound \eqref{eq:bv-development-derivative-bound}, we now give a sufficient condition to guarantee the expected signature $\ESig(X_{[0, T]})$ determines the characteristic function $\phi_{X_{[0, T]}}$, and therefore, determines the law of the random signature $S(X)_{0, T}$.

\begin{proposition} \label{prop:smooth-path-determinacy}
Let $X_{[0, T]}(\omega)$ be a random smooth path. Suppose there exists an $R > 0$ such that

\begin{equation}
\label{eq:variation-exponential-moment}
 \E[e^{R\|X\|_{1\var, [0, T]}}] < \infty.
\end{equation}
Then, the expected signature $\ESig(X_{[0, T]})$ has a positive radius of convergence. Moreover,  $\ESig(X_{[0, T]})$ determines $\phi_{X_{[0, T]}}$ and therefore determines the distribution  $\mu_{S(X)_{0, T}}$ within the class of random geometric rough paths satisfying the Conditions (ROC) as in Remark \ref{rem:roc-class-determinacy}.
\end{proposition}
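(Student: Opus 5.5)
We verify the two Conditions (ROC) of Proposition \ref{prop:roc-determinacy-criterion} and then invoke that proposition.

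\emph{Condition (1).} For a bounded variation path we have the classical estimate $\|S(X)^n_{0,T}\|_{\otimes n} \le \|X\|^n_{1\var,[0,T]}/n!$. This follows from \eqref{eq:bv-signature-integrals}, exactly as in the last steps of the proof of Corollary \ref{cor:smooth-development-derivative-bound}. Hence for $0\le\lambda<R$,
\begin{equation*}
\sum_{n=0}^\infty \E\big[\|S(X)^n_{0,T}\|_{\otimes n}\big]\lambda^n \le \E\big[e^{\lambda\|X\|_{1\var,[0,T]}}\big] \le \E\big[e^{R\|X\|_{1\var,[0,T]}}\big]<\infty
\end{equation*}
by \eqref{eq:variation-exponential-moment}. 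Thus $r_1(X_{[0,T]})\ge R>0$. All levels are integrable, so $\ESig(X_{[0,T]})$ exists, and Theorem \ref{thm:expected-signature-radii} gives $r_2\ge r_1>0$.

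\emph{Condition (2).} Fix $k$ and a nonzero $M\in\lin(\R^d,\so(k))$, and apply Theorem \ref{thm:analytic-expectation} entrywise to $f(\lambda,\omega)=(Y^\lambda_T(\omega))^{ij}$. By Theorem \ref{thm:smooth-development-derivatives}(3), $f(\cdot,\omega)$ is entire for every $\omega$, and its $n$-th derivative is the $(i,j)$ entry of $Y^{\lambda,(n)}_T$. Corollary \ref{cor:smooth-development-derivative-bound} gives, uniformly in $\lambda_0\in\R$,
\begin{equation*}
\E\big[|f^{(n)}(\lambda_0,\cdot)|\big]\le \E\big[\|Y^{\lambda_0,(n)}_T\|_{\HS}\big]\le \|M\|_{\op}^n\,\E\big[\|X\|^n_{1\var,[0,T]}\big]\quad(n\ge1),
\end{equation*}
and the $n=0$ term is bounded by $\sqrt{k}$. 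Choose $r=R/(2\|M\|_{\op})$. Exchanging sum and expectation by Tonelli,
\begin{equation*}
\sum_{n}\frac{\E[|f^{(n)}(\lambda_0,\cdot)|]}{n!}r^n\le \sqrt{k}+\E\big[e^{r\|M\|_{\op}\|X\|_{1\var,[0,T]}}\big]<\infty.
\end{equation*}
Theorem \ref{thm:analytic-expectation} then shows that $\lambda\mapsto\phi_{X_{[0,T]}}(\lambda M)$ is entrywise analytic on $(\lambda_0-r,\lambda_0+r)$ for every $\lambda_0\in\R$. Here $r$ is independent of $\lambda_0$, which is stronger than required. For $M=0$ the function is constant, so there is nothing to prove. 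Proposition \ref{prop:roc-determinacy-criterion} now yields the determinacy claim.

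\emph{Identifying $\phi$.} To state explicitly that $\ESig$ determines $\phi_{X_{[0,T]}}$, note that on $|\lambda|<r_1/\|M\|_{\op}$ the function $\phi(\lambda M)$ equals $\sum_n\lambda^n M^{\otimes n}(\E[S(X)^n_{0,T}])$. Real analyticity on all of $\R$ then extends this identification everywhere: either propagate with steps of fixed length $r$, or use the identity theorem as in the proof of Proposition \ref{prop:roc-determinacy-criterion}.

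\emph{Main obstacle.} The only delicate point is measure-theoretic. One must check that $\omega\mapsto Y^{\lambda,(n)}_T(\omega)$ is measurable; this holds because it is a continuous function of the signature, or a limit of difference quotients. One must also check that the Corollary's deterministic bound is applied pathwise with a possibly random $T$. The analytic estimates themselves are immediate from the Corollary.
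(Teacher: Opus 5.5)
Your proposal is correct and follows the paper's proof. The factorial decay $\|S(X)^n_{0,T}\|_{\otimes n}\le \|X\|^n_{1\var,[0,T]}/n!$ gives $r_1\ge R$. The $\lambda$-uniform bound of Corollary \ref{cor:smooth-development-derivative-bound}, fed into Theorem \ref{thm:analytic-expectation}, then gives analyticity on intervals of fixed radius, and Proposition \ref{prop:roc-determinacy-criterion} finishes the argument.

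Your choice $r=R/(2\|M\|_{\op})$ is more conservative than the paper's $r=R/\|M\|_{\op}$, which also works. Applying Theorem \ref{thm:analytic-expectation} entrywise and treating the $n=0$ term and the case $M=0$ separately are useful tidy-ups rather than a different argument.
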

\begin{proof}
By condition \eqref{eq:variation-exponential-moment}, $\sum_{n = 0}^\infty \frac{\E(\|X\|_{1\var, [0, T]}^n)}{n!} R^n < \infty$. And by the factorial decay of signature \eqref{eq:rough-path-factorial-control} applied to the case $p = 1$ (e.g., we pick the control function $w(s, t)$ as $\|X\|_{1\var, [s, t]}$ and $\gamma_1 = 1$), $\|S(X(\omega))_{0, T}^n\|_{ \otimes n} \le \frac{\|X(\omega)\|^n_{1\var, [0, T]}}{n!}$ for all $n \ge 0$ and $\omega \in \Omega$. Combining these two facts yields $r_1(X_{[0, T]}) \ge R > 0$, where the definition of $r_1$ can be found in Subsection \ref{sec:expected-signature-and-pcf}. As a result, $R(\ESig(X_{[0, T]})) > 0$ by Theorem~\ref{thm:expected-signature-radii}.

We now fix $k \ge 1$ and pick a non-zero $M \in \lin(\R^d, \so(k))$. By Theorem \ref{thm:smooth-development-derivatives}, the random function $f(\lambda, \omega) := Y^\lambda_T(\omega)$ is analytic on $\R$ for every $\omega \in \Omega$ so that for any $\lambda_0 \in \R$, one has $f^{(n)}(\lambda_0, \omega) = Y^{\lambda_0, (n)}_T(\omega)$ for all $n \ge 0$. By \eqref{eq:bv-development-derivative-bound},

\begin{equation*}
 \forall n \geq 1, \quad \frac{\E\big[||f^{(n)}(\lambda_0, \cdot )||_{\HS}\big]}{n!} \le \frac{\E\big(\|M\|_{\op}^n\|X\|^n_{1\var, [0, T]}\big)}{n!}.
\end{equation*}
This estimate, together with condition \eqref{eq:variation-exponential-moment}, shows that

\begin{equation*}
 \sum_{n = 0}^\infty \frac{\E\big[||f^{(n)}(\lambda_0, \cdot )||_{\HS}\big]}{n!} r^n < \infty, \quad r = \frac{R}{||M||_{\op}}.
\end{equation*}
Therefore, by Theorem \ref{thm:analytic-expectation}, the function
\begin{equation*}
\lambda \mapsto \phi_{X_{[0, T]}}(\lambda M) = \E[f(\lambda, \cdot )] = \E[Y^\lambda_T] = \E[\Phi_{X_{[0, T]}}(\lambda M)]
\end{equation*}
is analytic in $(\lambda_0 - r, \lambda_0 + r)$ for all $\lambda_0 \in \R$. Since $r$ does not rely on the choice of $\lambda_0$, the Conditions (ROC) in Proposition \ref{prop:roc-determinacy-criterion} are fulfilled for the random smooth path $X_{[0, T]}(\omega)$. So our claim follows from Proposition \ref{prop:roc-determinacy-criterion}.
\end{proof}

\begin{remark}
From the inequality \eqref{eq:bv-development-derivative-bound} we observe that for random smooth path $X_{[0, T]}(\omega)$, the $L^q$-norm of the $n$-th derivative of its unitary development $Y^{\lambda, (n)}_T$ with respect to $\lambda$ satisfies an upper bound $\|Y^{\lambda, (n)}_T\|_{L^q} \le \|M\|_{\op}^n \E[\|X\|^{nq}_{1\var, [0, T]}]^{\frac{1}{q}}$ for all $n \ge 1$ and $q \ge 1$, which is independent of $\lambda \in \R$.
\end{remark}

Using smooth approximation of Lipschitz continuous paths followed by time reparametrisation, we can extend all above results to random path $X_{[0, T]}(\omega) \in \Omega_1(V)$ which are continuous paths with bounded variation. Indeed, first we consider a deterministic path $X_{[0, T]}$ which is Lipschitz continuous (and therefore has bounded variation). By \cite[Proposition 1.32]{Friz2010}, there exists a sequence of smooth paths $X(m)_{[0, T]}, m \ge 1$ such that $X(m)_{[0, T]} \to X_{[0, T]}$ with respect to the $\| \cdot \|_{1\var, [0, T]}$-norm. Let us denote by $Y(m)^\lambda_{[0, T]}$ the development path of $X(m)_{[0, T]}$ under $\lambda M$ (that is, replace $X$ by $X(m)$ in \eqref{eq:smooth-development-ode}) and by $Y^\lambda_{[0, T]}$ the counterpart driven by $X_{[0, T]}$.  By Theorem \ref{thm:smooth-development-derivatives}, for each $m \ge 1$, the function $\lambda \mapsto Y(m)^{\lambda}_t$ can be extended to a holomorphic function on $\C$, which will also be denoted by the same notation. Since the signature of $X_{[0, T]} \in \Omega_1(V)$ also satisfies the factorial decay \eqref{eq:rough-path-factorial-control}, using the same reasoning we also have $\lambda \in \R \mapsto Y^{\lambda}_t$ can be extended to a holomorphic function on $\C$. Note that by Theorem \ref{thm:smooth-development-derivatives} the extended functions $Y^z_t$ and $Y(m)^z_t$ for $z \in \C$ still satisfy the linear equations \eqref{eq:smooth-development-ode} driven by $X$ and $X(m)$, respectively. Then, using the local Lipschitz continuity of RDE solution, see Theorem \ref{thm:rde-well-posedness}, applied to the linear vector fields $\bV_z : \C^{k \times k} \to \lin(\R^d, \C^{k \times k})$ defined via $\bV_z(y)(x) := y(z M)(x)$ for $y \in \C^{k \times k}$ and $x \in \R^d$, we obtain that for each $t \in [0, T]$, $Y(m)^z_{t} \to Y^z_t$ as $m \to \infty$ locally uniformly in $z \in \C$, which by the Weierstrass convergence theorem in complex analysis ensures that all $n$-th derivatives $Y(m)^{\lambda, (n)}_t$ converge to $Y^{\lambda, (n)}_t$ as $m \to \infty$, for all $\lambda \in \C$. In particular, for $\lambda \in \R$, since each $Y(m)^{\lambda, (n)}_t$ satisfies the formula \eqref{eq:smooth-development-derivative-formula}, we must have
\begin{align*}
 Y^{\lambda, (n)}_t & = \lim_{m \to \infty} n! \int_{\Delta^n_t} \Ad_{Y(m)^\lambda_{s_1}}(M(\dot X(m)_{s_1})) \\
 &\quad \quad \quad \cdots \Ad_{Y(m)^\lambda_{s_n}} (M(\dot X(m)_{s_{n}}))\dd s_1 \cdots \dd s_{n}Y(m)^\lambda_t \\
 & = \lim_{m \to \infty} n! \int_{\Delta^n_t} \Ad_{Y(m)^\lambda_{s_1}}(M(\dd X(m)_{s_1})) \cdots \Ad_{Y(m)^\lambda_{s_n}} (M(\dd X(m)_{s_{n}}))Y^\lambda_t.
\end{align*}
Now, let us view $(Y^\lambda_t, Z^{\lambda, (1)}_t, \ldots, Z^{\lambda, (n)}_t) \in (\R^{k \times k})^{n + 1}$ as the unique solution to the following coupled ODE/RDE :
\begin{equation}
\label{eq:coupled-development-derivative-system}
\begin{gathered}
\dd \begin{bmatrix}
 y^0_t\\
 z^1_t\\
 z^2_t\\
 \vdots\\
 z^n_t
\end{bmatrix} = \begin{bmatrix}
 \lambda y^0_t M (\dd X_t) \\
 \Ad_{y^0_t} M(\dd X_t) \\
 2z^1_t\Ad_{y^0_t} M(\dd X_t)\\
 \vdots\\
 nz^{n - 1}_t \Ad_{y^0_t} M(\dd X_t)
\end{bmatrix},
\\[4pt]
y^0_0 = I_k, \quad z^j_0 = 0\quad(1 \le j \le n).
\end{gathered}
\end{equation}
where the vector field $\Ad_ \cdot M : \GL(k, \R) \to \lin(\R^d, \R^{k \times k})$ should be read as $\Ad_ \cdot M(y) (x) := \Ad_yM(x) = yM(x)y^{ - 1}$ for $x \in \R^d$ and $y \in \GL(k, \R)$. Since we already know that the solution $y^0_t$ to the equation \eqref{eq:coupled-development-derivative-system} takes values in the compact group $O(k) \subset \GL(k, \R)$, we may replace $\Ad_ \cdot M$ by a compactly supported smooth vector field on $\R^{k \times k}$ which agrees with it on a neighbourhood of $O(k)$, without changing the solution. This replacement is in the $\Lip^\gamma$-space for every $\gamma > 1$. Therefore, by using the continuity of ODE/RDE solutions in the driven signals, see Theorem \ref{thm:rde-well-posedness} (which remains valid for linear vector fields, see also \cite{linearRDE2010}), and using the hypothesis $X(m)_{[0, T]} \to X_{[0, T]}$ in the $1$-variation topology, we also get that the (uniform) limit of $Z(m)^{\lambda, (n)}_t$, which is the solution to \eqref{eq:coupled-development-derivative-system} driven by $X(m)_{[0, T]}$, must satisfies the \eqref{eq:coupled-development-derivative-system} driven by $X_{[0, T]}$. Hence, for each $n \ge 1$, we obtain for Lipschitz-continuous path $X_{[0, T]}$, $k \ge 1$, $M \in \lin(\R^d, \so(k))$ and $\lambda \in \R, t \in [0, T]$,
\begin{align}
\label{eq:bv-development-derivative-formula}
Y^{\lambda, (n)}_t & = \lim_{m \to \infty} Z(m)^{\lambda, (n)}_tY(m)^{\lambda}_t \nonumber\\
& = n! \int_{\Delta^n_t} \Ad_{Y^\lambda_{s_1}}(M(\dd X_{s_1})) \cdots \Ad_{Y^\lambda_{s_n}} (M(\dd X_{s_{n}}))Y^\lambda_t.
\end{align}
Furthermore, since by Corollary \ref{cor:smooth-development-derivative-bound} we have
\begin{equation*}
\|Y(m)^{\lambda, (n)}_t\|_{\HS} \le \|M\|_{\op}^n\|X(m)\|_{1\var, [0, t]}^n
\end{equation*}
holds for all $n, m \ge 1$, we get that
\begin{align*}
 \|Y^{\lambda, (n)}_t\|_{\HS} = \lim_{m \to \infty}\|Y(m)^{\lambda, (n)}_t\|_{\HS}
 & \le \|M\|_{\op}^n \lim_{m \to \infty}\|X(m)\|_{1\var, [0, t]}^n \\
 & = \|M\|_{\op}^n\|X\|_{1\var, [0, t]}^n.
\end{align*}
Using this bound, we can easily generalize Proposition \ref{prop:smooth-path-determinacy} to from smooth random paths to Lipschitz continuous random paths. Finally, since each continuous path with bounded variation $X_{[0, T]} \in \Omega_1(V)$ can be represented by a time-change as a Lipschitz continuous path on $[0, 1]$ with Lipschitz constant at most $\|X\|_{1\var, [0, T]}$, see \cite[Proposition 1.38]{Friz2010}, and since the signature mapping is invariant under time reparametrisation, Proposition \ref{prop:smooth-path-determinacy} actually holds for $X_{[0, T]} \in \Omega_1(V)$, which we record as the following corollary:

\begin{corollary}\label{cor:bv-path-determinacy}
For all $X_{[0, T]} \in \Omega_1(V)$ with bounded variation, all statements in Theorem \ref{thm:smooth-development-derivatives} remains valid for $X_{[0, T]}$ with the formula \eqref{eq:smooth-development-derivative-formula} replaced by \eqref{eq:bv-development-derivative-formula}. Moreover, Suppose $X_{[0, T]}(\omega) \in \Omega_1(V)$ is a random path with bounded variation, and assume that there exists an $R > 0$ such that $ \E[e^{R\|X\|_{1\var, [0, T]}}] < \infty$. Then, the expected signature $\ESig(X_{[0, T]})$ also has a positive radius of convergence and in this case $\ESig(X_{[0, T]})$ determines $\phi_{X_{[0, T]}}$ and therefore determines the distribution $\mu_{S(X)_{0, T}}$ within the class of random geometric rough paths satisfying the Conditions (ROC) as in Remark \ref{rem:roc-class-determinacy}.
\end{corollary}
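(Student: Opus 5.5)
The plan is to pass from smooth paths to bounded variation paths in two stages: first Lipschitz paths by smooth approximation, then general $\Omega_1(V)$ paths by time reparametrisation. After that, the probabilistic statement is proved exactly as in Proposition \ref{prop:smooth-path-determinacy}.

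\textbf{Lipschitz paths.} Let $X_{[0,T]}$ be Lipschitz. Take smooth $X(m)\to X$ in $1$-variation, as given by \cite[Proposition 1.32]{Friz2010}.
\begin{itemize}
\item Items (1)--(2) of Theorem \ref{thm:smooth-development-derivatives}. Since $S(X)_{0,t}\in E$, the series $\sum_n z^n M^{\otimes n}(S(X)^n_{0,t})$ converges absolutely for every $z\in\C$. It therefore defines an entire function, which solves the linear equation by \cite[Theorem 4.5]{Lyons2007}. It takes values in $O(k)$ for real $z$ by \cite[Sect. 4]{ChevyrevLyons2016}.
\item Item (3), analyticity and convergence. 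Apply Theorem \ref{thm:rde-well-posedness} in its linear-vector-field form to $\bV_z(y)(x)=y(zM)(x)$. This gives $Y(m)^z_t\to Y^z_t$ locally uniformly in $z$. By Weierstrass' theorem, $Y(m)^{\lambda,(n)}_t\to Y^{\lambda,(n)}_t$ for every $n$.
\item Item (3), the derivative formula. Use the coupled system \eqref{eq:coupled-development-derivative-system}, with $\Ad_\cdot M$ replaced by a compactly supported smooth field that agrees with it near $O(k)$. By continuity of solutions in $1$-variation, $Z(m)^{\lambda,(n)}_t\to Z^{\lambda,(n)}_t$. Here $Z^{\lambda,(n)}_t$ is the solution driven by $X$, which is exactly the iterated Riemann--Stieltjes integral in \eqref{eq:bv-development-derivative-formula}. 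Hence $Y^{\lambda,(n)}_t=Z^{\lambda,(n)}_tY^\lambda_t$, which is \eqref{eq:bv-development-derivative-formula}.
\item The bound. Passing to the limit in Corollary \ref{cor:smooth-development-derivative-bound} gives $\|Y^{\lambda,(n)}_t\|_{\HS}\le\|M\|_{\op}^n\|X\|^n_{1\var,[0,t]}$.
\end{itemize}

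\textbf{General bounded variation paths.} For $X\in\Omega_1(V)$, use \cite[Proposition 1.38]{Friz2010} to write $X=\tilde X\circ\psi$. Here $\psi$ is a nondecreasing continuous time change and $\tilde X$ is Lipschitz on $[0,1]$ with constant at most $\|X\|_{1\var,[0,T]}$. Signatures are invariant under reparametrisation, and so is the $1$-variation. Hence $Y^\lambda_t$, its $\lambda$-derivatives and the bound all transfer from $\tilde X$. The Stieltjes integrals in \eqref{eq:bv-development-derivative-formula} transform by change of variables, so the formula also holds for $X$. This establishes the deterministic part of the statement.

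\textbf{Random paths.} Let $X_{[0,T]}(\omega)$ be a random bounded variation path with $\E[e^{R\|X\|_{1\var}}]<\infty$.
\begin{itemize}
\item Radius of convergence. The factorial decay gives $\|S(X)^n_{0,T}\|_{\otimes n}\le\|X\|^n_{1\var}/n!$. Hence $r_1\ge R$, and $r_2>0$ by Theorem \ref{thm:expected-signature-radii}.
\item Analyticity of the PCF. Fix $M\ne0$ and set $f(\lambda,\omega)=Y^\lambda_T(\omega)$, which is entire in $\lambda$. The derivative bound gives
\[
\sum_n \frac{\E\|f^{(n)}(\lambda_0,\cdot)\|_{\HS}}{n!}\,r^n<\infty \quad\text{with } r=R/\|M\|_{\op},
\]
uniformly in $\lambda_0$. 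Theorem \ref{thm:analytic-expectation} then shows that $\lambda\mapsto\phi_{X}(\lambda M)$ is analytic on $(\lambda_0-r,\lambda_0+r)$ for every $\lambda_0$.
\item Conclusion. Conditions (ROC) hold, and Proposition \ref{prop:roc-determinacy-criterion} gives the determinacy.
\end{itemize}

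\textbf{Main obstacle.} The delicate point is the limit passage in the derivative formula. The vector field $y\mapsto yMy^{-1}$ is neither linear nor globally Lipschitz, so Theorem \ref{thm:rde-well-posedness} does not apply to it directly. I would handle this with the compact-support modification near $O(k)$, which is legitimate because all solutions stay in $O(k)$. The rest of the coupled system is linear in $(z^1,\dots,z^n)$ with bounded coefficients, so the linear-RDE version of the stability estimate covers it.
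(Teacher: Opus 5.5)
Your proposal is correct and follows essentially the same route as the paper. The steps line up one for one:
\begin{enumerate}
\item smooth approximation of Lipschitz paths in $1$-variation via \cite[Proposition 1.32]{Friz2010};
\item locally uniform convergence in $z$ from the linear-RDE stability estimate, then Weierstrass' theorem for the derivatives;
\item the coupled system \eqref{eq:coupled-development-derivative-system}, with $\Ad_\cdot M$ replaced by a compactly supported field agreeing with it near $O(k)$;
\item passing the bound of Corollary \ref{cor:smooth-development-derivative-bound} to the limit;
\item reparametrisation to a Lipschitz path via \cite[Proposition 1.38]{Friz2010};
\item the argument of Proposition \ref{prop:smooth-path-determinacy} for the random case.
\end{enumerate}
The only point where you are more explicit than the paper is the change-of-variables argument that carries the iterated Stieltjes formula through the time change; the paper leaves this implicit.
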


\begin{example} Let $X_{[1, N]} = X_1 * \ldots * X_N$\footnote{Here the symbol $*$ means the path concatenation.} be a random piecewise-linear path consisting of $N$ segments $X_i, i = 1, \ldots, N$, whose lengths are independently and identically distributed according to an exponential distribution. Then $X_{[1, N]}$  satisfies the condition \eqref{eq:variation-exponential-moment}.
\end{example}

\begin{remark}
In \cite[Theorem 6.13]{ChevyrevLyons2016} the authors provides an useful criterion under which the expected signatures can determine the characteristic functions and the law of the random signature. More precisely, given any geometric $p$-rough path $\bX_{[0, T]} \in G\Omega_p(V)$ and any bounded measurable subset $B \subset E$, let $B(\bX_{[0, T]}) := \inf\{m \ge 1 : S(\bX)_{0, T} = x_1 \otimes \ldots \otimes x_m, x_i \in B, i = 1, \ldots, m\}$ (with $\inf \emptyset := \infty$) be the minimum positive integer for which the signature $S(\bX)_{0, T}$ can be written as the (tensor) product of elements from $B$. Now suppose  $\bX_{[0, T]}(\omega)$ is a random geometric $p$-rough path, if there exist a bounded and measurable subset $B \subset E$ and a positive $\lambda > 0$ such that $\E[e^{\lambda B(\bX_{[0, T]})}] < \infty$, then $\ESig(\bX_{[0, T]})$ can determine $\mu_{S(\bX)_{0, T}}$ within the class of random geometric rough paths satisfying the Conditions (ROC) as in Remark \ref{rem:roc-class-determinacy}. Now, if $\bX_{[0, T]}(\omega) = X_{[0, T]}(\omega) \in \Omega_1(V)$ is a random bounded variation path, let us take $B := \{S(X)_{0, T} : X_{[0, T]} \in \Omega_1(V), \|X\|_{1\var, [0, T]} \le 1\}$ which is indeed a bounded subset in $E$. Next, define a sequence of stopping times $\tau_0 = 0, \tau_{j + 1} := \inf\{t \ge \tau_j : \|X\|_{1\var, [\tau_j, t]} = 1\} \wedge T$ for $j \ge 0$ and let $N(X_{[0, T]}) := \sup\{j \ge 0 : \tau_j < T\}$. By using the additivity of the $1$-variation norm (namely, $\|X\|_{1\var, [0, T]} = \sum_{j = 0}^{N(X_{[0, T]})} \|X\|_{1\var, [\tau_j, \tau_{j + 1}]}$ with $\tau_{N(X_{[0, T]}) + 1} := T$) we certainly obtain that $N(X_{[0, T]}) \le \|X\|_{1\var, [0, T]} \le N(X_{[0, T]}) + 1$. This observation together with the Chen's relation $S(X)_{0, T} = \prod_{j = 0}^{N(X)} S(X_{[\tau_j, \tau_{j + 1}]})$ implies that $B(X_{[0, T]}) \le N(X_{[0, T]}) + 1 \le \|X\|_{1\var, [0, T]} + 1$ and therefore the condition $\E[e^{\lambda B(X_{[0, T]})}] < \infty$ for some $\lambda > 0$ used in \cite[Theorem 6.13]{ChevyrevLyons2016} is actually guaranteed by assuming $\E[e^{R \|X\|_{1\var, [0, T]}}] < \infty$ for some $R > 0$, which is exactly our condition \eqref{eq:variation-exponential-moment} in Proposition \ref{prop:smooth-path-determinacy} and Corollary \ref{cor:bv-path-determinacy}.
\end{remark}

\subsection{$X_{[0, T]}$ is of finite $p$-variation with $p > 2$}\label{sec:greedy-sequence-bounds}

Now let us turn to the case of geometric $p$-rough paths $\bX_{[0, T]} \in G\Omega_p(V)$ for some $p > 2$. Recall that $\bX_t \in T^{\lfloor p \rfloor}(V)$ for all $t \in [0, T]$ and there exists a sequence of  paths $X(m)_{[0, T]} \in \Omega_1(V), m \ge 1$ such that $\lim_{m \to \infty} \|S_{\lfloor p \rfloor}(X(m)); \bX\|_{p\var} = 0$. Let $X_{[0, T]} = \bX^1_{[0, T]} : [0, T] \to V$ denote the first coordinate projection of $\bX_{[0, T]}$ onto $V$, which belongs to $C^{p\var}([0, T], \R^d)$. Conversely, if $X_{[0, T]}(\omega) \in C^{p\var}([0, T], \R^d)$ is a random path with finite $p$-variation in some special class, then one can construct a random geometric $p$-rough path $\bX_{[0, T]}(\omega) \in G\Omega_p(V)$ above $X_{[0, T]}(\omega)$. The most prominent examples include continuous semimartingales (see the next subsection), or centered continuous Gaussian processes with sufficiently regular covariance matrix, see e.g. \cite[Example 6.7]{ChevyrevLyons2016} or \cite[Chapter 15]{Friz2010}, or Markovian rough paths generated by Dirichlet forms, see e.g. \cite[Example 6.8]{ChevyrevLyons2016} or \cite[Chapter 16]{Friz2010}.

As in the smooth case, for a given $k \ge 1$, $M \in \lin(\R^d, \so(k))$ and $\lambda \in \R$, let us consider the unitary development path of $\bX_{[0, T]}$ along $\lambda M$, again denoted by $Y^\lambda_{[0, T]}$, which is now the unique solution to the following linear rough differential equation (RDE):
\begin{equation}
\label{eq:scaled-rough-development-rde}
 \dd Y^\lambda_t = Y^\lambda_t (\lambda M)(\dd \bX_t), \quad Y^\lambda_0 = I_k.
\end{equation}
Note that the above RDE should be interpreted as the equation driven by rough path $\bX_{[0, T]}$ along the vector field $V_\lambda(y)x := y(\lambda M)(x)$ for $y \in \R^{k \times k}$ and $x \in \R^d$. By the methodology for solving RDE, see Subsection \ref{sec:rough-differential-equations}, for any sequence of bounded variation paths $X(m)_{[0, T]} \in \Omega_1(V), m \ge 1$ with
\begin{equation*}
\lim_{m \to \infty} \|S_{\lfloor p \rfloor}(X(m)); \bX\|_{p\var} = 0,
\end{equation*}
let $Y^\lambda(m)_{[0, T]}$ be the development path along $\lambda M$ for $X(m)_{[0, T]}$, we have $Y^\lambda(m)_{[0, T]} \to Y^\lambda_{[0, T]}$ uniformly on $[0, T]$ as $m \to \infty$. Hence, by using the results for bounded variation paths, and following the same arguments for the extension from smooth paths to bounded variation paths as in the last subsection, we can easily show that

\begin{theorem}\label{thm:rough-development-derivatives}
Let $p > 2$ and $\bX \in G\Omega_p(\R^d)$. Fix $k \ge 1$ and $M \in \lin(\R^d, \so(k))$, and for each $\lambda \in \R$ let $Y^\lambda_{[0, T]}$ be defined as in \eqref{eq:scaled-rough-development-rde}. Then we have
  \begin{enumerate}
\item For all $t \in [0, T]$, we have $Y^\lambda_t = \widetilde{\lambda M}(S(\bX)_{0, t}) = \sum_{n = 0}^\infty \lambda^n M^{ \otimes n} (S(\bX)^n_{0, t}) = \Phi_{\bX_{[0, t]}}(\lambda M)$.
\item For all $t \in [0, T]$, $Y^\lambda_t \in O(k)$ is an orthogonal matrix.
\item For all $t \in [0, T]$, the mapping $\lambda \to Y^\lambda_t$ is real analytic on $\R$ and can be extended to a holomorphic function on $\C$, and and the extended function $Y^z_t = \sum_{n = 0}^\infty z^n M^{ \otimes n} (S(\bX)^n_{0, t}) = \widetilde{zM}(S(\bX)_{0, t})$ for $z \in \C$ satisfies the linear RDE

\begin{equation*}
 \dd Y^z_t = Y^z_t (zM)(\dd \bX_t), \quad Y^z_0 = I_k.
\end{equation*}
Moreover, for any $n \ge 1$, $\lambda \in \R$, it holds that

\begin{align*}
 Y^{\lambda, (n)}_t = n! \int_{\Delta^n_t} \Ad_{Y^\lambda_{s_1}}(M(\dd \bX_{s_1})) \cdots \Ad_{Y^\lambda_{s_n}} (M(\dd \bX_{s_{n}}))Y^\lambda_t,
\end{align*}
where the integral $n! \int_{\Delta^n_t} \Ad_{Y^\lambda_{s_1}}(M(\dd \bX_{s_1})) \cdots \Ad_{Y^\lambda_{s_n}} (M(\dd \bX_{s_{n}}))$ denotes the $z^n$-component of the unique solution to the coupled RDE \eqref{eq:coupled-development-derivative-system} driven by $\bX_{[0, T]}$.
  \end{enumerate}
\end{theorem}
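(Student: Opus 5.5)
The plan is to transfer everything from the bounded variation case (Corollary \ref{cor:bv-path-determinacy}) to $\bX$ by approximation. Fix a sequence $X(m)_{[0,T]} \in \Omega_1(V)$ with $\|S_{\lfloor p \rfloor}(X(m)); \bX\|_{p\var} \to 0$, which exists because $\bX \in G\Omega_p(V)$. For items (1) and (2), recall from Subsection \ref{sec:rough-path-definitions} that $S(X(m))_{0,t} \to S(\bX)_{0,t}$ in $E$. Since $\widetilde{zM}$ is continuous on $E$ (Subsection \ref{sec:expected-signature-and-pcf}), we get $\widetilde{zM}(S(X(m))_{0,t}) \to \widetilde{zM}(S(\bX)_{0,t})$ for every $z\in\C$. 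By Theorem \ref{thm:rde-well-posedness} applied to the linear vector field $\bV_z(y)x = y(zM)(x)$, in its linear-vector-field version, $Y^z(m)_t \to Y^z_t$, where $Y^z$ is the RDE solution. By the bounded variation case, $Y^z(m)_t = \widetilde{zM}(S(X(m))_{0,t})$, so the two limits coincide. This gives (1) and the RDE characterisation in (3). Item (2) follows because $O(k)$ is closed and each $Y^\lambda(m)_t \in O(k)$; alternatively, invoke \cite[Sect.~4]{ChevyrevLyons2016} directly.

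For analyticity in (3), I will use that $S(\bX)_{0,t}\in E$. The series $\sum_n |z|^n\|M\|_{\op}^n\|S(\bX)^n_{0,t}\|_{\otimes n}$ then converges for all $z$, so $z\mapsto Y^z_t$ is entire and real analytic on $\R$. Next, I will upgrade the pointwise convergence $Y^z(m)_t\to Y^z_t$ to local uniform convergence in $z$. The convergence in $E$ gives, for $|z|\le \rho$,
\[
\|Y^z(m)_t-Y^z_t\|_{\HS}\le \sum_n (\rho\|M\|_{\op})^n \|S(X(m))^n_{0,t}-S(\bX)^n_{0,t}\|_{\otimes n}\to 0,
\]
uniformly on $\{|z|\le\rho\}$. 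Weierstrass' theorem then yields $Y(m)^{\lambda,(n)}_t \to Y^{\lambda,(n)}_t$ for all $n$ and all real $\lambda$.

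For the derivative formula, I will write $Y(m)^{\lambda,(n)}_t = z(m)^n_t\, Y(m)^\lambda_t$, where $(y^0(m), z^1(m),\dots,z^n(m))$ solves the coupled system \eqref{eq:coupled-development-derivative-system} driven by $X(m)$. This identification is exactly formula \eqref{eq:bv-development-derivative-formula} from the bounded variation case. As in Subsection \ref{sec:smooth-and-bv-paths}, I will replace $y\mapsto \Ad_y M$ by a compactly supported smooth field agreeing with it near $O(k)$. The coupled system is then driven by vector fields that are smooth in $y^0$ and affine in $(z^1,\dots,z^n)$, with triangular structure. I will then apply Theorem \ref{thm:rde-well-posedness}, in the version for linear or affine fields \cite[Theorem 10.53]{Friz2010}, \cite{linearRDE2010}, to conclude that $(y^0(m),z(m))$ converges uniformly to the unique solution of \eqref{eq:coupled-development-derivative-system} driven by $\bX$. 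This requires $p$-variation closeness of the drivers, which holds by the choice of $X(m)$. Passing to the limit gives the claimed formula, with the $n$-fold iterated integral interpreted as the $z^n$-component.

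I expect the main obstacle to be the well-posedness and stability of the coupled RDE for $p>2$. The field is neither globally $\Lip^\gamma$, because it is unbounded and linear in the $z$-variables, nor purely linear, because of the $\Ad$ term in $y^0$. My first approach is to solve the system inductively. The pair $(y^0,z^1)$ is driven by a compactly supported $\Lip^\gamma$ field, so Theorem \ref{thm:rde-well-posedness} applies directly. Each subsequent $z^j$ solves an equation that is linear in $z^{j-1}$, driven by the joint rough path $(\bX, \int \Ad_{y^0}M(\dd\bX))$, and stability is obtained step by step via the linear-RDE estimates. If that approach is too delicate, the fallback is to note that the stated formula only defines the iterated integral as a limit. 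Uniqueness and existence of that limit then follow from the convergence of $Y(m)^{\lambda,(n)}_t$ already established by Weierstrass, together with the invertibility of $Y(m)^\lambda_t \to Y^\lambda_t$.
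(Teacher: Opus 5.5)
Your proposal is correct and follows essentially the paper's route. You approximate $\bX$ by bounded variation paths and transfer items (1)--(3) from Corollary \ref{cor:bv-path-determinacy}, using continuity of the signature map into $E$ and of linear RDE solutions. You then upgrade to locally uniform convergence in $z$, so Weierstrass' theorem passes derivatives to the limit, and identify $Y^{\lambda,(n)}_t$ through the coupled system \eqref{eq:coupled-development-derivative-system}. The well-posedness issue you flag for that coupled system (its field is neither $\Lip^\gamma$ nor linear) is treated only briefly in the paper. Your fallback settles it within Definition \ref{def:rough-differential-equation-solution}, provided you note that $Y^z(m)_t\to Y^z_t$ uniformly in $t\in[0,T]$ as well as in $z$ on compacts: every approximating sequence then gives the same uniform limit $z^n_t = Y^{\lambda,(n)}_t(Y^\lambda_t)^\dagger$.
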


\begin{remark}
For $2 < p < 3$, using Gubinelli's controlled path theory (\cite{Gubinelli2004}) we can also interpret $\int_{\Delta^n_t} \Ad_{Y^\lambda_{s_1}}(M(\dd \bX_{s_1})) \cdots \Ad_{Y^\lambda_{s_n}} (M(\dd \bX_{s_{n}}))$ as an iterated integral of a matrix-valued controlled path. Indeed, by standard results in controlled path theory, see e.g. \cite[Theorem 3.10]{Friz2018}, the RDE solution $(Y^\lambda_{[0, T]}, \hat \bX^\lambda_{[0, T]})$ to the coupled RDE (see \eqref{eq:coupled-development-derivative-system})
\begin{equation*}
\dd \begin{bmatrix}
 y^0_t\\
 z^1_t
\end{bmatrix} = \begin{bmatrix}
 \lambda y^0_t M(\dd \bX_t)\\
 \Ad_{y^0_t} M(\dd \bX_t)
\end{bmatrix},
\end{equation*}
is a controlled path relative to $\bX$, where
\begin{equation}
\label{eq:rough-moving-frame}
 \hat \bX^\lambda_t := \int_0^t \Ad_{Y^\lambda_{s}}(M(\dd \bX_{s})), \quad t \in [0, T].
\end{equation}
Further, by \cite[Remark 2.6]{Allan2023b}, the $n$-th iterated integral $\int_{\Delta^n_t} \cdot \dd \hat \bX^\lambda_{s_1} \ldots \cdot \dd \hat \bX^\lambda_{s_n} \in \R^{k \times k}$ of controlled path $\hat \bX^\lambda_{[0, T]}$ against itself is well-defined and remains an $\bX$-controlled path. Here we use ``$ \cdot $'' to emphasize the concatenation is the matrix multiplication instead of the tensor product used in defining the iterated integrals for signature. Clearly this iterated integral  $\int_{\Delta^n_t} \cdot \dd \hat \bX^\lambda_{s_1} \ldots \cdot \dd \hat \bX^\lambda_{s_n}$  satisfies the recursive relation:
\begin{equation}
\label{eq:rough-matrix-signature-recursion}
 D_0(\hat \bX^\lambda_t) = I_k, \quad D_{n + 1}(\hat \bX^\lambda_t) = \int_0^t D_{n}(\hat \bX^\lambda_s) \cdot \dd \hat \bX^\lambda_s.
\end{equation}
Using the stability results on rough integration, see e.g. \cite[Lemma 3.4]{Friz2018}, we can easily see that
\begin{align*}
 D_n(\hat \bX^\lambda_t) & = \int_{\Delta^n_t} \cdot \dd \hat \bX^\lambda_{s_1} \ldots \cdot \dd \hat \bX^\lambda_{s_n}\\
 & = \lim_{m \to \infty} \int_{\Delta^n_t} \Ad_{Y(m)^\lambda_{s_1}}(M(\dd X(m)_{s_1})) \cdots \Ad_{Y(m)^\lambda_{s_n}} (M(\dd X(m)_{s_{n}}))\\
 & = \int_{\Delta^n_t} \Ad_{Y^\lambda_{s_1}}(M(\dd \bX_{s_1})) \cdots \Ad_{Y^\lambda_{s_n}} (M(\dd \bX_{s_{n}})).
\end{align*}
For general $p > 2$, the same matrix iterated integrals are defined by the coupled RDE \eqref{eq:coupled-development-derivative-system}, with $D_n(\hat\bX^\lambda_t) = z^n_t / n!$. A controlled-path interpretation then uses higher-order controlled expansions.
\end{remark}
Therefore, we may call the sequence of matrices $(I_k, \hat \bX^\lambda_t, \ldots, D_n(\hat \bX^\lambda_t), \ldots)$ the matrix signature of the path $\hat \bX^\lambda_{[0, T]}$. Clearly, we need to analyse the decay rate of this matrix signature in order to study the radius of convergence for the power series $\sum_{n = 0}^\infty \frac{1}{n!}\|Y^{\lambda, (n)}_t\|_{\HS} |\xi|^n$ in the increment $\xi$ about the fixed centre $\lambda$. In fact, by the standard estimation \eqref{eq:rde-variation-bound} for the $p$-variation of solutions to RDEs together with the Lyons extension theorem (\cite[Theorem 3.1.2]{Lyons2002}), one can obtain that for every $n \ge 1$,
\begin{equation*}
 \frac{1}{n!}\|Y^{\lambda, (n)}_t\|_{\HS} \le C^n k^n\frac{(1 + \|\bX\|_{p\var, [0, T]})^{np}}{(\frac{n}{p})!},
\end{equation*}
where $C = C(\lambda, \|M\|_{\op}, k, p) \ge 1$ is locally bounded in $\lambda$. For $n = 0$, $\|Y^\lambda_t\|_{\HS} = \sqrt{k}$. However, this bound is too coarse for concrete application: for instance, even if $\bX(\omega)_{[0, T]}$ is the Stratonovich lift of a Brownian motion, by the Fernique theorem (\cite[Corollary 13.14]{Friz2010}) one only has $\E[e^{\eta \|\bX\|_{p\var, [0, T]}^2}] < \infty$ for some $\eta > 0$, which cannot guarantee the power series $\sum_{n = 0}^\infty \E[\frac{(1 + \|\bX\|_{p\var, [0, T]})^{np}}{(\frac{n}{p})!}]\lambda^n$ has a positive radius of convergence. To solve this issue, we have to use the greedy sequence introduced in \cite{CassLittererLyons2013}, which was also  heavily used in \cite[Sect. 6]{ChevyrevLyons2016} for investigating the analyticity of characteristic functions of random geometric rough paths, to estimate the $L^q$-norms of $Y^{\lambda, (n)}_t$ in an indirect way instead of studying the decay rate of the matrix signature $D_n(\hat \bX^\lambda_t)$. Recall that given a geometric $p$-rough path $\bX_{[0, T]}$, we define the sequence of (stopping) times:
\begin{equation*}
\tau_0 = 0, \quad \tau_{j + 1} = \inf\{t > \tau_j : \|\bX\|_{p\var, [\tau_j, t]} \ge 1\} \wedge T, \quad j \ge 0,
\end{equation*}
and then let $\bN = \bN(\bX_{[0, T]}) = \sup\{j \ge 0 : \tau_j < T\}$. Note that for all $1 \leq j \le \bN$, we have $\|\bX\|_{p\var, [\tau_{j - 1}, \tau_j]} = 1$ and $\|\bX\|_{p\var, [\tau_{\bN}, T]} \le 1$.

\begin{lemma}[Uniform bound on the development over greedy intervals]
\label{lem:uniform-greedy-development}
Let $p > 2$, $k \ge 1$, $\bX \in G\Omega_p(\mathbb{R}^d)$ and let $M \in \lin(\mathbb{R}^d, \mathfrak{so}(k))$. Let $(\tau_i)_{i \geq 0}$ be the greedy sequence defined as above and $\bN = \bN(\bX_{[0, T]}) := \sup\{j \geq 0 : \tau_j < T\}. $ For $z \in \mathbb{C}$ and $0 \leq i \leq \bN$, let $\widetilde Y^{z, i}$ denote the solution on $[\tau_i, \tau_{i + 1}]$ of\footnote{Note that if $z \in \C\setminus \R$, then $\widetilde Y_t^{z, i}$ may not belong to $O(k)$, because the Lie algebra $\so(k)$ is only an $\R$-vector space. }
\begin{equation*}
 \dd \widetilde Y_t^{z, i}
 =
 \widetilde Y_t^{z, i}\,(zM)(\dd \bX_t),
 \qquad
 \widetilde Y_{\tau_i}^{z, i} = I_k.
\end{equation*}
Fix $\lambda \in \mathbb{R}$. Then, for every sufficiently small $r > 0$, there exists a constant
\begin{equation*}
 C_{\lambda, r, k, M}
 =
 \|M\|_{\mathrm{op}}\,
 C\bigl(
 p, k, (|\lambda| + r)\|M\|_{\mathrm{op}}
 \bigr)
\end{equation*}
which is independent of the interval index $i$ and locally bounded in $\lambda$, such that
\begin{equation*}
 \sup_{t \in [\tau_i, \tau_{i + 1}]}
 \|
 \widetilde Y_t^{z, i}
 -
 \widetilde Y_t^{\lambda, i}
 \|_{\mathrm{op}}
 \leq
 C_{\lambda, r, k, M}|z - \lambda|
\end{equation*}
for every $z \in D_\C(\lambda, r)$ and every $0 \leq i \leq \bN$. Consequently,
\begin{equation}
\label{eq:greedy-interval-complex-development-bound}
 \sup_{\substack{z \in D_\C(\lambda, r)\\
 0 \leq i \leq \bN\\
 t \in [\tau_i, \tau_{i + 1}]}}
 \|\widetilde Y_t^{z, i}\|_{\mathrm{op}}
 \leq
 K_{\lambda, r, k, M},
\end{equation}
where $K_{\lambda, r, k, M} := 1 + C_{\lambda, r, k, M}r. $ In particular,
\begin{equation*}
 K_{\lambda, r, k, M}\rightarrow1
 \qquad\text{as }r\downarrow0,
\end{equation*}
locally uniformly in $\lambda$.
\end{lemma}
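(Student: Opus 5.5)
Fix $i$ and work on $[\tau_i,\tau_{i+1}]$. The plan is to apply the stability estimate \eqref{eq:rde-stability-bound} of Theorem \ref{thm:rde-well-posedness}, for linear vector fields, to the two RDEs driven by the \emph{same} rough path $\bX_{[\tau_i,\tau_{i+1}]}$ but along different vector fields, $\bV_z(y)x = y(zM)(x)$ and $\bV_\lambda(y)x = y(\lambda M)(x)$, with the same initial value $I_k$. The key point is that the driving signal always has $\|\bX\|_{p\var,[\tau_i,\tau_{i+1}]}\le 1$ by construction of the greedy sequence. The constants therefore depend only on $p$, $k$ and bounds for the vector fields, and not on $i$. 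Since $\|\bV_z-\bV_\lambda\|$ equals $|z-\lambda|\,\|M\|_{\op}$ (up to a factor depending only on $k$ from norm equivalences) and the rough-path difference term vanishes, we obtain $\sup_t\|\widetilde Y^{z,i}_t-\widetilde Y^{\lambda,i}_t\|\le C_1C_2\|M\|_{\op}|z-\lambda|$. Here $C_2$ is uniform because $\|\bV_z\|\le(|\lambda|+r)\|M\|_{\op}$ for $z\in D_\C(\lambda,r)$ and $\|\bX\|_{p\var}\le 1$.

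The linear vector fields are not in $\Lip^\gamma$, so I would handle this in one of two ways. The first is to invoke the linear-RDE versions cited after Theorem \ref{thm:rde-well-posedness} (\cite[Theorem 10.53]{Friz2010}, \cite{linearRDE2010}). These give a Lipschitz estimate in the vector field whose constant is of the form $\exp(C(p)(\|\bV\|\,\|\bX\|_{p\var})^p)$ times polynomial factors, which is uniform here. The second is a more hands-on route. Take $D:=\widetilde Y^{z,i}-\widetilde Y^{\lambda,i}$. It solves the inhomogeneous linear equation
\begin{equation*}
\dd D_t = D_t(zM)(\dd\bX_t) + (z-\lambda)\widetilde Y^{\lambda,i}_t M(\dd\bX_t),\qquad D_{\tau_i}=0,
\end{equation*}
which can be written as part of a block-triangular linear RDE for $(\widetilde Y^{\lambda,i},D)$. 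A priori bounds for linear RDEs (the analogue of \eqref{eq:rde-variation-bound}) then yield $\sup_t\|D_t\|\le |z-\lambda|\,\|M\|_{\op}\,C(p,k,(|\lambda|+r)\|M\|_{\op})$. Equivalently, one can write $\widetilde Y^{z,i}_t=\widetilde{zM}(S(\bX)_{\tau_i,t})$ and bound the series directly. The estimate $\|S(\bX)^n_{\tau_i,t}\|\le \gamma_p^{-1}/(n/p)!$, from \eqref{eq:rough-path-factorial-control} with control $w\le 1$, gives
\begin{equation*}
\|\widetilde Y^{z,i}_t-\widetilde Y^{\lambda,i}_t\|\le\sum_{n\ge1}|z^n-\lambda^n|\,\|M\|_{\op}^n/(\gamma_p(n/p)!)\le|z-\lambda|\sum_{n\ge1} n(|\lambda|+r)^{n-1}\|M\|_{\op}^n/(\gamma_p(n/p)!).
\end{equation*}
The last sum is finite and of the required form $\|M\|_{\op}C(p,k,(|\lambda|+r)\|M\|_{\op})$. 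I regard this series argument as the cleanest, and would present it as the main proof. One caveat is that \eqref{eq:rough-path-factorial-control} with $w(s,t)=\|\bX\|^p_{p\var,[s,t]}$ requires identifying a control dominating the $p$-variation on each greedy interval. The standard choice is $w=C(p)\|\bX\|^p_{p\var}$ (super-additive up to constants), which is still $\le C(p)$ on greedy intervals. The factor $k$ arises only through the comparison between the operator norm and the HS norm.

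For the consequence \eqref{eq:greedy-interval-complex-development-bound}, Theorem \ref{thm:rough-development-derivatives}(2) gives $\widetilde Y^{\lambda,i}_t\in O(k)$ for real $\lambda$, so $\|\widetilde Y^{\lambda,i}_t\|_{\op}=1$. The triangle inequality then gives $\|\widetilde Y^{z,i}_t\|_{\op}\le 1+C_{\lambda,r,k,M}r$. Because $C(p,k,\cdot)$ is increasing and finite, $C_{\lambda,r,k,M}$ stays bounded as $r\downarrow0$ and as $\lambda$ ranges over compacts. Hence $K\to1$ locally uniformly. The main obstacle is ensuring that the constant is genuinely independent of $i$ and of the random path. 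This rests entirely on $\|\bX\|_{p\var}\le1$ on every greedy interval, including the last one, together with the factorial decay bound being uniform under that normalization.
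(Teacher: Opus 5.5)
Your argument is correct, but the route you present as the main proof differs from the paper's. The paper's proof is essentially your first paragraph. It applies the stability estimate \eqref{eq:rde-stability-bound} to $\bV_z$ and $\bV_\lambda$ with the same driver and the same initial value. It notes $\|\bV_z-\bV_\lambda\|\le C|z-\lambda|\,\|M\|_{\op}$. It then argues that the Lipschitz constant depends on the driver only through $\|\bX\|_{p\var,[\tau_i,\tau_{i+1}]}\le 1$. It does not address the fact that linear vector fields are not in $\Lip^\gamma$, beyond the earlier pointer to linear-RDE results.

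Your series argument avoids RDE stability altogether. It uses only three ingredients:
\begin{itemize}
\item the identification $\widetilde Y^{z,i}_t=\widetilde{zM}(S(\bX)_{\tau_i,t})$, which the paper itself uses in the proof of Proposition \ref{prop:greedy-development-derivative-bound};
\item the factorial decay \eqref{eq:rough-path-factorial-control} with control $w=C(p)\|\bX\|^p_{p\var}$, which is at most $C(p)$ on every greedy interval (so your $\gamma_p^{-1}$ picks up a harmless extra factor $C(p)^{n/p}$);
\item the elementary bound $|z^n-\lambda^n|\le n(|\lambda|+r)^{n-1}|z-\lambda|$.
\end{itemize}

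This buys you a fully explicit constant of the required form $\|M\|_{\op}\,C(p,(|\lambda|+r)\|M\|_{\op})$. By submultiplicativity of the Hilbert--Schmidt norm, it is even independent of $k$. It also removes any reliance on a stability theorem for unbounded vector fields. As a by-product, you get a uniform bound on $\sup_{|z|\le R}\|\widetilde Y^{z,i}_t\|_{\op}$ for arbitrary $R$, which is exactly the estimate the paper later needs in Remark \ref{rem:greedy-moments-entire-pcf}.

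The paper's route is the general flow-stability mechanism and does not require the development to be a power series in the signature. For the linear equation at hand, however, your version is cleaner and more self-contained.
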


\begin{proof}
By the definition of the greedy sequence, we have
\begin{equation*}
 \|\bX\|_{p\text{-var},
 [\tau_i, \tau_{i + 1}]}
 \leq 1
\end{equation*}
for every $0 \leq i \leq \bN$. Hence all the restrictions $\bX|_{[\tau_i, \tau_{i + 1}]}$ belong to the same bounded subset of the $p$-variation rough path space.

For $z \in \mathbb{C}$, consider the linear vector field
\begin{equation*}
 \bV_z(y)(x) := y\,(zM)(x),
 \quad
 y \in \mathbb{C}^{k \times k},
 \quad x \in \mathbb{R}^d.
\end{equation*}
If $|z - \lambda| \leq r$, then
\begin{equation*}
 |z| \leq |\lambda| + r,
\end{equation*}
and therefore the norm of $\bV_z$ is bounded uniformly over the disk $\{z : |z - \lambda| \leq r\}$ by a constant depending only on $(|\lambda| + r)\|M\|_{\mathrm{op}}$. Moreover,
\begin{equation*}
 \bV_z(y)(x) - \bV_\lambda(y)(x)
 =
 y\,(z - \lambda)M(x),
\end{equation*}
so that
\begin{equation*}
 \|\bV_z - \bV_\lambda\|_{\op}
 \leq
 C\,|z - \lambda|\,\|M\|_{\mathrm{op}},
\end{equation*}
where the constant $C$ depends only on $k$.

Then, applying the local stability estimate for linear rough differential equations on the interval $[\tau_i, \tau_{i + 1}]$ (see Theorem \ref{thm:rde-well-posedness}) gives
\begin{equation*}
 \sup_{t \in [\tau_i, \tau_{i + 1}]}
 \|
 \widetilde Y_t^{z, i}
 -
 \widetilde Y_t^{\lambda, i}
 \|_{\mathrm{op}}
 \leq
 C_{\lambda, r, k, M}|z - \lambda|,
\end{equation*}
where the constant $C_{\lambda, r, k, M}$ is locally bounded in $\lambda$. The crucial point here is that the Lipschitz constant $C_{\lambda, r, k, M}$ depends on the driving rough path only through an upper bound for its $p$-variation, so the condition $\|\bX\|_{p\text{-var}; [\tau_i, \tau_{i + 1}]} \leq 1 $ for all $0 \le i \le \bN$, the same constant $C_{\lambda, r, k, M}$ can be chosen for every greedy interval $[\tau_i, \tau_{i + 1}]$.

Now, for a fixed $\lambda \in \mathbb{R}$, recall that $\widetilde Y_t^{\lambda, i} \in O(k)$ for every $0 \le i \le \bN$, we have $\|\widetilde Y_t^{\lambda, i}\|_{\mathrm{op}} = 1$. Therefore, for $z \in D_\C(\lambda, r)$, it holds that
\begin{align*}
 \|\widetilde Y_t^{z, i}\|_{\mathrm{op}}
 \leq
 \|\widetilde Y_t^{\lambda, i}\|_{\mathrm{op}}
 +
 \|
 \widetilde Y_t^{z, i}
 -
 \widetilde Y_t^{\lambda, i}
 \|_{\mathrm{op}}
 \leq
 1 + C_{\lambda, r, k, M}|z - \lambda|
 \leq
 1 + C_{\lambda, r, k, M}r.
\end{align*}
Thus, setting
\begin{equation*}
 K_{\lambda, r, k, M}
 :=
 1 + C_{\lambda, r, k, M}r,
\end{equation*}
we obtain the bound \eqref{eq:greedy-interval-complex-development-bound}. Clearly, $K_{\lambda, r, k, M} \to 1$ as $r \to 0$, which is locally uniformly in $\lambda$.
\end{proof}

\begin{proposition}\label{prop:greedy-development-derivative-bound}
Let $\omega \mapsto \bX_{[0, T]}(\omega)$ be a random geometric $p$-rough path with $p > 2$ (where $T(\omega)$ can also be a random time). Fix $k \ge 1$ and $M \in \lin(\R^d, \so(k))$. For any $\eta > 0$, any bounded subset $\bK \subset \R$, there exists an $r = r(\eta, \bK, p, k, M) > 0$ such that for all $n \ge 0$,
\begin{equation}
\label{eq:greedy-development-derivative-bound}
\begin{aligned}
\sup_{\lambda \in \bK} \|Y^{\lambda, (n)}_T\|_{\HS}
& \le \frac{n!}{r^n} \sqrt{k} e^\eta e^{\eta \bN(\bX_{[0, T]})}, \\
\sup_{\lambda \in \bK} \|Y^{\lambda, (n)}_T\|_{L^1}
& \le \frac{n!}{r^n} \sqrt{k} e^\eta \E[e^{\eta \bN(\bX_{[0, T]})}].
\end{aligned}
\end{equation}
\end{proposition}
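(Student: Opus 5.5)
The plan is to use Cauchy's estimates for the holomorphic extension $z \mapsto Y^z_T$ given by Theorem \ref{thm:rough-development-derivatives}, combined with Chen's relation along the greedy sequence. By Chen's relation and the multiplicativity of $\widetilde{zM}$, for every $z \in \C$,
\begin{equation*}
 Y^z_T = \widetilde{zM}\big(S(\bX)_{0, T}\big) = \prod_{i = 0}^{\bN} \widetilde{zM}\big(S(\bX_{[\tau_i, \tau_{i + 1}]})\big) = \prod_{i = 0}^{\bN} \widetilde Y^{z, i}_{\tau_{i + 1}},
\end{equation*}
with $\tau_{\bN + 1} := T$. Hence, for $z \in D_\C(\lambda, r)$, Lemma \ref{lem:uniform-greedy-development} together with submultiplicativity of the operator norm gives
\begin{equation*}
 \|Y^z_T\|_{\op} \le K_{\lambda, r, k, M}^{\bN + 1},
 \qquad\text{so}\qquad
 \|Y^z_T\|_{\HS} \le \sqrt{k}\, K_{\lambda, r, k, M}^{\bN + 1}.
\end{equation*}

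The second step is to choose $r$. Fix $\eta > 0$ and a bounded set $\bK$, say $\bK \subset [-L, L]$. Lemma \ref{lem:uniform-greedy-development} says that $K_{\lambda, r, k, M} = 1 + C_{\lambda, r, k, M} r$, where $C_{\lambda, r, k, M}$ depends on $\lambda$ only through $(|\lambda| + r)\|M\|_{\op}$. We may therefore take $C$ nondecreasing in that argument, which makes it bounded uniformly over $\lambda \in \bK$ and $r \le 1$ by $C^* := \|M\|_{\op}C(p, k, (L + 1)\|M\|_{\op})$. We then choose $r \in (0, 1]$ so small that $1 + C^* r \le e^\eta$. With this choice,
\begin{equation*}
 \sup_{z \in D_\C(\lambda, r)}\|Y^z_T\|_{\HS} \le \sqrt{k}\, e^{\eta(\bN + 1)}
 \qquad\text{for all } \lambda \in \bK,
\end{equation*}
and this bound holds pathwise, for each $\omega$.

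The third step applies Cauchy's integral formula entrywise on circles of radius $r' < r$ centred at $\lambda$. This gives $\|Y^{\lambda, (n)}_T\|_{\HS} \le \frac{n!}{r'^n}\sup_{|z - \lambda| = r'}\|Y^z_T\|_{\HS}$. Applying it to the matrix-valued integral and using the triangle inequality for the HS norm, this passes from entries to the full norm. Letting $r' \uparrow r$ yields the first inequality of \eqref{eq:greedy-development-derivative-bound}, uniformly over $\lambda \in \bK$; alternatively, we can simply replace $r$ by $r/2$ at the outset. Taking expectations then gives the $L^1$ bound. This requires measurability of $\bN$, which follows because $\bN$ is defined through $p$-variation norms of a continuous rough path, and $\|\cdot\|_{p\var}$ is lower semicontinuous.

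The main obstacle is the uniformity of the constant in Lemma \ref{lem:uniform-greedy-development}. The bound $K^{\bN+1}$ is only useful if $K$ does not depend on the interval $i$ or on $\omega$, and can be made arbitrarily close to $1$ uniformly in $\lambda \in \bK$. This is exactly what makes the factor $e^{\eta \bN}$ appear in place of the coarse Fernique-type bound. We would therefore check carefully that the stability constant in Theorem \ref{thm:rde-well-posedness} (linear version) depends only on the upper bound $1$ for the $p$-variation on each greedy interval and on $|z|\|M\|_{\op}$. It is also worth noting that the telescoping step requires the complex solutions $\widetilde Y^{z, i}$ to coincide with $\widetilde{zM}$ applied to the signatures, which is part (3) of Theorem \ref{thm:rough-development-derivatives}.
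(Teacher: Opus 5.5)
Your proposal is correct and follows essentially the same route as the paper. Both arguments use Chen's relation along the greedy sequence together with multiplicativity of $\widetilde{zM}$, the per-interval bound from Lemma \ref{lem:uniform-greedy-development} giving $\|Y^z_T\|_{\HS}\le\sqrt{k}K_{\lambda,r,k,M}^{\bN+1}$, a choice of $r$ with $\sup_{\lambda\in\bK}K_{\lambda,r,k,M}\le e^\eta$, and then Cauchy's estimate followed by taking expectations. Your passage from circles of radius $r'<r$ to $r$ and your remark on the measurability of $\bN$ make precise details that the paper leaves implicit.
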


\begin{proof}
For $z \in \C$, let $Y^z_T := \widetilde{(zM)}(S(\bX)_{0, T})$, which is a holomorphic function in $z$ by Theorem \ref{thm:rough-development-derivatives}. By Chen's relation, we have
\begin{equation*}
S(\bX)_{0, T} = S(\bX)_{0, \tau_1} \otimes \cdots \otimes S(\bX)_{\tau_{\bN(\bX_{[0, T]})}, T},
\end{equation*}
and since $\widetilde M : E \to \C^{k \times k}$ is an algebraic homomorphism, we have
\begin{equation*}
Y^z_T = \widetilde{(zM)}(S(\bX)_{0, T}) = \prod_{i = 0}^{\bN(\bX_{[0, T]})} \widetilde{(zM)}(S(\bX)_{\tau_i, \tau_{i + 1}}).
\end{equation*}
Further, for all $0 \le i \le \bN(\bX_{[0, T]})$, by Theorem \ref{thm:rough-development-derivatives} we have $\widetilde{(zM)}(S(\bX)_{\tau_i, \tau_{i + 1}}) = \tilde Y^{z, i}_{\tau_{i + 1}}$, where the paths $\tilde Y^{z, i}_{[\tau_i, \tau_{i + 1}]}$ are defined as in Lemma \ref{lem:uniform-greedy-development}. Hence, we have
\begin{equation*}
 Y_T^z
 =
 \widetilde Y^{z, 0}_{\tau_1}
 \widetilde Y^{z, 1}_{\tau_2}
 \cdots
 \widetilde Y^{z, \bN(\bX_{[0, T]})}_{T},
\end{equation*}
and by the sub-multiplicativity of the operator norm as well as the bound \eqref{eq:greedy-interval-complex-development-bound},
\begin{equation*}
 \|Y_T^z\|_{\mathrm{op}}
 \leq
 \prod_{i = 0}^{\bN(\bX_{[0, T]})}
 \|\widetilde Y^{z, i}_{\tau_{i + 1}}\|_{\mathrm{op}}
 \leq
 K_{\lambda, r, k, M}^{\bN(\bX_{[0, T]}) + 1},
\end{equation*}
Using the fact that $\|A\|_{\mathrm{HS}} \leq \sqrt{k}\,\|A\|_{\mathrm{op}}$, by \eqref{eq:greedy-interval-complex-development-bound} again it follows that
\begin{equation}
\label{eq:global-complex-development-bound}
 \sup_{z \in D_\C(\lambda, r)}
 \|Y^z_T\|_{\mathrm{HS}}
 \leq
 \sqrt{k} K_{\lambda, r, k, M}^{\bN(\bX_{[0, T]}) + 1}.
\end{equation}
Now, for any $\lambda \in \bK$, since by Lemma \ref{lem:uniform-greedy-development}, $K_{\lambda, r, k, M} \to 1$ as $r \to 0$ locally uniformly in $\lambda$, for the given $\eta > 0$ we can pick $r = r(\eta, \bK, p, k, M) > 0$ small enough such that $\sup_{\lambda \in \bK}K_{\lambda, r, k, M} < e^\eta$. Then, using \eqref{eq:global-complex-development-bound} and the Cauchy integral formula for derivatives of holomorphic functions, namely,
\begin{align*}
 Y^{\lambda, (n)}_T = \frac{n!}{2\pi \mathrm i} \oint_{|z - \lambda| = \rho} \frac{Y^z_T}{(z - \lambda)^{n + 1}} \dd z, \quad 0 < \rho < r,
\end{align*}
we can deduce that for all $\lambda \in \bK$,
\begin{align*}
 \|Y^{\lambda, (n)}_T\|_{\HS} \le \sup_{z \in D_\C(\lambda, r)}
 \|Y_T^z\|_{\mathrm{HS}} \frac{n!}{r^n} \le \frac{n!}{r^n} \sqrt{k}\,
 K_{\lambda, r, k, M}^{\bN(\bX_{[0, T]}) + 1} \le \frac{n!}{r^n} \sqrt{k} e^{\eta (\bN(\bX_{[0, T]}) + 1)}
\end{align*}
and then taking expectations on both sides we complete the proof.
\end{proof}

Thanks to the above result, we can reproduce \cite[Corollary 6.18]{ChevyrevLyons2016}.

\begin{corollary}\label{cor:greedy-exponential-tail-determinacy}
Let $\bX_{[0, T]}$ be a random continuous geometric $p$-rough path with $p > 2$. Suppose $\E[e^{\eta \bN(\bX_{[0, T]})}] < \infty$ for some $\eta > 0$, then the distribution $\mu_{S(\bX)_{0, T}}$ is determined by the expected signature $\ESig(\bX_{[0, T]})$ within the class of random geometric rough paths satisfying the Conditions (ROC) as in Remark \ref{rem:roc-class-determinacy}.
\end{corollary}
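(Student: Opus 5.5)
The plan is to verify the two Conditions (ROC) of Proposition \ref{prop:roc-determinacy-criterion} for $\bX_{[0,T]}$ and then invoke that proposition.

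\textbf{Condition (2).} Fix $k \ge 1$, a nonzero $M \in \lin(\R^d, \so(k))$, and a bounded set $\bK \subset \R$. Apply Proposition \ref{prop:greedy-development-derivative-bound} with the given $\eta$. This yields $r = r(\eta, \bK, p, k, M) > 0$ such that, for every $\lambda_0 \in \bK$ and every $n \ge 0$,
\[
\E\big[\|Y^{\lambda_0,(n)}_T\|_{\HS}\big] \le \frac{n!}{r^n}\sqrt{k}\,e^{\eta}\,\E[e^{\eta\bN}].
\]
Take $f(\lambda,\omega) := Y^\lambda_T(\omega)$, one matrix entry at a time. By Theorem \ref{thm:rough-development-derivatives}, $f$ is entire in $\lambda$ for each $\omega$, and its derivatives at $\lambda_0$ are $Y^{\lambda_0,(n)}_T$. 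For any $\rho < r$,
\[
\sum_n \frac{\E|f^{(n)}(\lambda_0)|}{n!}\,\rho^n \le \sqrt{k}\,e^{\eta}\,\E[e^{\eta\bN}] \sum_n (\rho/r)^n < \infty .
\]
Theorem \ref{thm:analytic-expectation} then shows that $\lambda \mapsto \phi_{\bX_{[0,T]}}(\lambda M)$ is real analytic on $(\lambda_0-\rho, \lambda_0+\rho)$. Since $\rho$ depends only on $\bK$ and $M$, condition (2) holds. For $M = 0$ the function is constant, so there is nothing to prove.

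\textbf{Condition (1).} This is the main obstacle, because a positive radius of convergence for $\ESig$ is not stated directly for rough paths. I would prove $r_1 > 0$ via the greedy decomposition. By Chen's relation,
\[
S(\bX)_{0,T} = \bigotimes_{i=0}^{\bN} S(\bX)_{\tau_i,\tau_{i+1}},
\]
and each factor has $\|\bX\|_{p\var,[\tau_i,\tau_{i+1}]} \le 1$. By the factorial decay \eqref{eq:rough-path-factorial-control}, with control $w \le C_p$ on each greedy interval, there is a constant $A = A(p)$ with
\[
\sum_{m} \|S(\bX)^m_{\tau_i,\tau_{i+1}}\|_{\otimes m}\,\mu^m \le \exp(A(\mu+\mu^p)) \quad \text{for every } i .
\]
The projective norm is submultiplicative under tensor products, so
\[
\sum_n \|S(\bX)^n_{0,T}\|_{\otimes n}\,\mu^n \le \exp\big(A(\mu+\mu^p)(\bN+1)\big).
\]
Now choose $\mu > 0$ small enough that $A(\mu+\mu^p) \le \eta$. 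Taking expectations gives
\[
\sum_n \E\|S(\bX)^n_{0,T}\|_{\otimes n}\,\mu^n \le e^{\eta}\,\E[e^{\eta\bN}] < \infty .
\]
Hence $r_1(\bX_{[0,T]}) \ge \mu > 0$, and therefore $r_2 > 0$ by Theorem \ref{thm:expected-signature-radii}. If the constant $A$ is awkward to pin down, a fallback is an alternative route: the Cauchy bound at $\lambda_0 = 0$ combined with $M^{\otimes n}(S^n) = Y^{0,(n)}_T/n!$ controls $\sum_n \E\|M^{\otimes n}(S^n)\|\,\rho^n$. That route still needs uniformity over $M$, so I prefer the direct tensor estimate.

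Both conditions hold, so the conclusion follows from Proposition \ref{prop:roc-determinacy-criterion}.
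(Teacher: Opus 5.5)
Your argument is correct. For condition (2) it coincides with the paper's proof: apply Proposition \ref{prop:greedy-development-derivative-bound} with the given $\eta$, sum against $s^n$ for $s<r$, and invoke Theorem \ref{thm:analytic-expectation}.

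You differ from the paper on condition (1).

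\textbf{The paper's route.} It disposes of condition (1) in one line by citing \cite[Theorem 6.13]{ChevyrevLyons2016}.

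\textbf{Your route.} You prove $r_1(\bX_{[0,T]})>0$ directly, from three ingredients:
\begin{enumerate}
\item Chen's relation over the greedy partition.
\item The factorial decay \eqref{eq:rough-path-factorial-control} on each greedy interval. An admissible control there is $w(s,t)=c_p\|\bX\|^p_{p\var,[s,t]}$ with $c_p=\max_{n\le\lfloor p\rfloor}((n/p)!\gamma_p)^{p/n}$, and it is at most $c_p$ on each greedy interval.
\item The cross-norm inequality $\|a\otimes b\|_{\otimes(m+n)}\le\|a\|_{\otimes m}\|b\|_{\otimes n}$.
\end{enumerate}
Together these give $\sum_n\|S(\bX)^n_{0,T}\|_{\otimes n}\mu^n\le(1+\varepsilon_p(\mu))^{\bN+1}$ with $\varepsilon_p(\mu)\to0$ as $\mu\downarrow0$. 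Integrability then follows once $\varepsilon_p(\mu)\le\eta$.

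This is really the content hidden inside the citation. To apply \cite[Theorem 6.13]{ChevyrevLyons2016} here you must in any case check two things: that $\{S(\bX)_{0,T}:\|\bX\|_{p\var}\le1\}$ is bounded in $E$, and that $S(\bX)_{0,T}$ is a product of at most $\bN+1$ of its elements. That is the same factorial estimate, and it is also the one the paper uses in Remark \ref{rem:greedy-moments-entire-pcf}.

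Your route gives a self-contained proof with an explicit bound $r_1\ge\mu(\eta,p)$. The paper's route gives brevity.

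Two smaller points. You only need small $\mu$, so the global form $\exp(A(\mu+\mu^p))$ of your bound is unnecessary. You were also right to set aside the Cauchy-estimate fallback at $\lambda_0=0$: for fixed $k$ the maps $M^{\otimes n}$ have large kernels, so that route would need uniformity in $k$ and $M$ that Proposition \ref{prop:greedy-development-derivative-bound} does not provide.
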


\begin{proof}
Fix $k \ge 1$ and $M \in \lin(\R^d, \so(k))$. Let $\bK \subset \R$ be a bounded interval and pick any $\lambda_0 \in \bK$. By \eqref{eq:greedy-development-derivative-bound} and the condition that $\E[e^{\eta \bN(\bX_{[0, T]})}] < \infty$, we have for all $0 < s < r$, where $r = r(\eta, \bK, p, k, M)$ is taken as in Proposition \ref{prop:greedy-development-derivative-bound}:
\begin{equation*}
 \sum_{n = 0}^\infty \E\bigg[\frac{1}{n!}\|Y^{\lambda_0, (n)}_T\|_{\HS}\bigg]s^n \le \sqrt{k} e^\eta\E[e^{\eta \bN(\bX_{[0, T]})}] \sum_{n = 0}^\infty \bigg(\frac{s}{r}\bigg)^n < \infty.
\end{equation*}
This of course implies that the function $\lambda \mapsto \phi_{\bX}(\lambda M)$ is analytic in $(\lambda_0 - r, \lambda_0 + r)$ for every $\lambda_0 \in \bK$ by using Theorem \ref{thm:analytic-expectation}. Moreover, by \cite[Theorem 6.13]{ChevyrevLyons2016} we notice that the condition $\E[e^{\eta \bN(\bX_{[0, T]})}] < \infty$ also ensures that $\ESig(\bX_{[0, T]})$ has a positive radius of convergence. So $\bX_{[0, T]}$ satisfies the Conditions (ROC) and then by using Proposition \ref{prop:roc-determinacy-criterion} it follows that $\phi_{\bX_{[0, T]}}( \cdot )$ is uniquely determined by $\ESig(\bX_{[0, T]})$ and therefore $\mu_{S(\bX)_{0, T}}$ is uniquely determined by $\ESig(\bX_{[0, T]})$.
\end{proof}


\begin{remark}
Our proof largely follows the strategy developed in \cite{ChevyrevLyons2016}, but with two modifications that are useful for our purpose. First, instead of working directly with $\phi_{\bX}(\lambda M)$, we introduce the matrix-valued process $Y^{\lambda}_{[0, T]}$ as an intermediate object. This allows us to isolate the analytic dependence on $\lambda$ at the level of the matrix development. Second, after obtaining the local complex-analytic bound as in \cite[Theorem 6.13]{ChevyrevLyons2016}, we apply Cauchy's estimates to derive explicit bounds for the Hilbert--Schmidt norms of all higher-order derivatives:
\begin{equation*}
\frac{1}{n!}
\left\|
Y_T^{\lambda, (n)}
\right\|_{\mathrm{HS}}
 \lesssim
r^{ - n} e^{\eta(\bN(\bX_{[0, T]}) + 1)}.
\end{equation*}
Thus, while the underlying analyticity mechanism is essentially the same as in the proof of \cite[Theorem 6.13]{ChevyrevLyons2016}, we obtain an explicit and quantitative $L^1$-bound for the higher order derivatives $Y_T^{\lambda, (n)}$, and hence a bound for the higher order derivatives of $\phi_{\bX}(\lambda M)$ in $\lambda$.
\end{remark}

\begin{remark}\label{rem:greedy-moments-entire-pcf}
If $\E[e^{\eta\bN(\bX_{[0, T]})}] < \infty$ for every $\eta > 0$, then, for every fixed $k \ge 1$ and $M \in \lin(\R^d, \so(k))$, the function $z \mapsto \E[Y_T^z]$ is entire. Indeed, for any $R > 0$, the factorial signature estimate \eqref{eq:rough-path-factorial-control} on each greedy interval, whose rough-path $p$-variation is at most $1$, gives a deterministic constant $B_R \ge 1$, depending only on $p, k, R$ and $M$, such that
\begin{equation*}
 \sup_{|z| \le R}\sup_{0 \le i \le \bN}
 \|\widetilde Y_{\tau_{i + 1}}^{z, i}\|_{\mathrm{op}} \le B_R.
\end{equation*}
Chen's identity and submultiplicativity therefore imply
\begin{equation*}
 \E\!\left[\sup_{|z| \le R}\|Y_T^z\|_{\HS}\right]
 \le \sqrt{k}\,\E[B_R^{\bN + 1}] < \infty.
\end{equation*}
Since $z \mapsto Y_T^z$ is entire pathwise, this locally integrable bound permits expectation to pass through contour integrals. Morera's theorem shows that $z \mapsto \E[Y_T^z]$ is entire. Consequently, its Taylor series about every real $\lambda_0$ has infinite radius of convergence. This argument uses bounds on arbitrary complex disks, not only the small disks supplied by Proposition \ref{prop:greedy-development-derivative-bound}.
\end{remark}

\begin{example}[Gaussian processes]\label{ex:gaussian-rough-paths}
According to \cite[Theorem 6.3]{CassLittererLyons2013}, if $\omega \mapsto X_{[0, T]}(\omega)$ is a centred $\R^d$-valued continuous Gaussian process (with independent components) defined on a deterministic time interval $[0, T]$ with the associated Cameron-Martin space $\cH$ such that the following conditions hold:
\begin{enumerate}
\item for almost all $\omega \in \Omega$, $X_{[0, T]}(\omega)$ admits a natural lift to a geometric $p$-rough path $\bX_{[0, T]}(\omega) \in G\Omega_p(V)$ for some $1 \le p < 4$.
\item there is some $q \in [1, 2)$ such that $1 / p + 1 / q > 1$ and the inclusion $\cH \hookrightarrow C^{q\var}([0, T], \R^d)$ is continuous,
\end{enumerate}
If the lift is initially given with $p \le 2$, choose $p' > 2$ sufficiently close to $2$ that $p' < 4$ and $1 / p' + 1 / q > 1$, and regard the same natural lift as a geometric $p'$-rough path. This is possible because $q < 2$. We use the corresponding greedy count below. Then the random variable $\bN(\bX_{[0, T]}(\omega))^{\frac{1}{q}}$ has a Gaussian tail such that $\E[e^{\eta \bN(\bX_{[0, T]})}] < \infty$ for all $\eta > 0$. Therefore, by Corollary \ref{cor:greedy-exponential-tail-determinacy},  for all these Gaussian processes satisfying the above two conditions, their distributions $\mu_{S(\bX)_{0, T}}$ are determined by the expected signature $\ESig(\bX_{[0, T]})$, and in this case for any $\lambda_0 \in \R$, the Taylor power series of $\lambda \mapsto \phi_\bX(\lambda M)$ at $\lambda_0$ has infinite radius of convergence by Remark \ref{rem:greedy-moments-entire-pcf}, for every $k \ge 1$ and $M \in \lin(\R^d, \so(k))$. For instance, fractional Brownian motion with Hurst parameter $H \in (\frac{1}{4}, \frac12)$ satisfies the above two conditions for any $p \in (\frac{1}{H}, 4)$ and $q \in ((H + \frac{1}{2})^{ - 1}, p / (p - 1))$, see \cite[Corollary 5.5]{CassLittererLyons2013}.
\end{example}

\begin{remark}
Alternatively, we can also get another feasible upper bound for the $L^1$-norm of $\frac{Y^{\lambda, (n)}_T}{n!}$ driven by certain Gaussian processes by using the estimates for the moments of random signatures obtained in \cite{FrizRiedel2011}. More precisely, suppose that $\bX_{[0, T]}(\omega) \in G\Omega_p(V)$ is the natural lift of a centred continuous Gaussian process $X_{[0, T]}(\omega)$, and as before, let $Y^z_T(\omega)$ be the unitary development of $\bX_{[0, T]}(\omega)$ with respect to $zM$ for some $M \in \lin(\R^d, \so(k))$ and $z \in \C$. By Theorem \ref{thm:rough-development-derivatives} we know that $Y^z_T(\omega) = \sum_{m = 0}^\infty z^m \widetilde M (S(\bX)^m_{0, T}(\omega))$, and consequently
\begin{align*}
 \|Y^z_T(\omega)\|_{\HS} & \le \sum_{m = 0}^\infty |z|^m \|\widetilde M (S(\bX)^m_{0, T}(\omega)) \|_{\HS} \\
 & \le \sqrt{k}\sum_{m = 0}^\infty \|M\|_{\op}^m|z|^m \|S(\bX)^m_{0, T}(\omega)\|_{ \otimes m}.
\end{align*}
By \cite[Theorem 1]{FrizRiedel2011}, if this Gaussian process $X_{[0, T]}( \cdot )$ satisfies the conditions (1) and (2) mentioned in Example \ref{ex:gaussian-rough-paths} and in addition also $\lfloor p \rfloor q < p$, then there is a constant $C = C(p, q, \cH)$ such that for all $m \ge 0$ (with the degree-zero expression interpreted as $1$),
\begin{equation}
\label{eq:gaussian-signature-decay}
 \E[\|S(\bX)^m_{0, T}\|_{ \otimes m}] \le \frac{C^m m^{\frac{mq}{2p}}}{(\frac{m}{p})!}.
\end{equation}
Hence, for such Gaussian process $X_{[0, T]}( \cdot )$, any given $\lambda_0 \in \R$ and any positive real number $r > 0$, let $R > |\lambda_0| + r$, since for any $z \in D_{\C}(\lambda_0, r)$ we have
\begin{equation*}
\sup_{z \in D_{\C}(\lambda_0, r)} \|Y^z_T(\omega)\|_{\HS} \le \sqrt{k}\sum_{m = 0}^\infty \|M\|_{\op}^m R^m \|S(\bX)^m_{0, T}(\omega)\|_{ \otimes m}
\end{equation*}
and therefore by the Cauchy integral formula $Y^{\lambda_0, (n)}_T = \frac{n!}{2\pi \mathrm i} \oint_{|z - \lambda_0| = \rho} \frac{Y^z_T}{(z - \lambda_0)^{n + 1}} \dd z$ where $0 < \rho < r$, we obtain that
\begin{align*}
 \frac{1}{n!} \E[\|Y^{\lambda_0, (n)}_T\|_{\HS}] & \le \E\bigg[\sup_{z \in D_{\C}(\lambda_0, r)} \|Y^z_T\|_{\HS} / r^n \bigg] \\
 & \le \frac{\sqrt{k}}{r^n}\sum_{m = 0}^\infty \|M\|_{\op}^m R^m \E[\|S(\bX)^m_{0, T}\|_{ \otimes m}] \\
 & \le \frac{\sqrt{k}}{r^n}\sum_{m = 0}^\infty \|M\|_{\op}^m R^m \frac{C^m m^{\frac{mq}{2p}}}{(\frac{m}{p})!}.
\end{align*}
As noticed in \cite[Remark 1-(1)]{FrizRiedel2011}, by the Stirling's formula it holds that $\frac{m^{\frac{mq}{2p}}}{\Gamma(m / p + 1)} \sim \sqrt{\frac{p}{2\pi m}}\,(pe)^{m / p} m^{ - \frac{m}{p}(1 - \frac{q}{2})}$ when $m \to \infty$, and consequently we have
\begin{equation*}
\sum_{m = 0}^\infty \|M\|_{\op}^m R^m \frac{C^m m^{\frac{mq}{2p}}}{(\frac{m}{p})!} < \infty,
\end{equation*}
which together with the above inequality for $\frac{1}{n!} \E[\|Y^{\lambda_0, (n)}_T\|_{\HS}]$ implies that for any $\lambda_0 \in \R$, the Taylor expansion of $\lambda \mapsto \phi_\bX(\lambda M)$ at $\lambda_0$ has infinite radius of convergence, because the radius $r$ appeared in the Cauchy integral formula can be chosen arbitrarily large. Actually in \cite{FrizRiedel2011} the decay rate $\eqref{eq:gaussian-signature-decay}$ was established for the $L^s$-norm for any $s \ge 1$, so we can obtain a similar bound for the $L^s$-norm of $\frac{Y^{\lambda_0, (n)}_T}{n!}$ for all $s \ge 1$. However, since the proof of \eqref{eq:gaussian-signature-decay} is based on the Gaussian tail of $\bN(\bX_{[0, T]})$, we did not get more new insight than the content in Proposition \ref{prop:greedy-development-derivative-bound}.
\end{remark}

\begin{example}[Markovian rough paths]\label{ex:markovian-rough-paths}
Another important class of random continuous geometric $p$-rough paths are the Markovian rough paths $\omega \mapsto \bX^{a, x}_{[0, T]}(\omega)$ (here $T$ is deterministic) generated by Dirichlet form $\mathcal{E}^a$ on $L^2(\fg)$, where $\fg = \fg^{N}(\R^d)$ with $N = \lfloor p \rfloor \ge 2$ is the free nilpotent Lie algebra over $\R^d$ of level $N$, $x \in \fg$ is the initial value with $\bX^{a, x}_0 = x$, and $a \in \Xi^{N, d}(\Lambda)$ with $\Lambda \ge 1$, see \cite[Chapter 16]{Friz2010} for the concrete definitions of the above notions. As pointed out in \cite[Example 6.8]{ChevyrevLyons2016}, see also \cite[Theorem 5.3]{Cass2017}, the random variable $\bN(\bX^{a, x}_{[0, T]})^{1 - \frac{1}{p}}$ has a Gaussian tail for any $p > 2$, which by Corollary \ref{cor:greedy-exponential-tail-determinacy} implies that the distribution $\mu_{S(\bX^{a, x})_{0, T}}$ of its signature is determined by the expected signature $\ESig(\bX^{a, x}_{[0, T]})$, and in this case for any $\lambda_0 \in \R$, the Taylor power series of $\lambda \mapsto \phi_{\bX^{a, x}_{[0, T]}}(\lambda M)$ at $\lambda_0$ has infinite radius of convergence, for every $k \ge 1$ and $M \in \lin(\R^d, \so(k))$, by Remark \ref{rem:greedy-moments-entire-pcf}.

Further, in \cite[Example 6.20]{ChevyrevLyons2016} the authors showed that for any $r > 0$ and $x \in \fg$, let $B_r(x) := \{y \in \fg : d_{\text{CC}}(x, y) \le r\}$ (where $d_{\text{CC}}$ denotes the Carnot-Caratheodory metric on $\fg$, see \cite[Chapter 7]{Friz2010}) and define $T(\omega) := \inf\{t \ge 0 : \bX^{a, x}_t(\omega) \notin B_r(x)\}$ be the first exit time of $\bX^{a, x}$ from the ball $B_r(x)$, then the random variable
\begin{equation*}
\bN(\bX^{a, x}_{[0, T]}(\omega)) := \sup\{j \ge 0 : \tau_j(\omega) < T(\omega)\}
\end{equation*}
has only an exponential tail. By Corollary \ref{cor:greedy-exponential-tail-determinacy} we see that the distribution $\mu_{S(\bX^{a, x})_{0, T}}$ is determined by the expected signature $\ESig(\bX^{a, x}_{[0, T]})$, but in  this case for any $\lambda_0 \in \R$, the Taylor power series of $\lambda \mapsto \phi_{\bX^{a, x}_{[0, T]}}(\lambda M)$ at $\lambda_0$ may only have a finite radius of convergence.
\end{example}

On the other hand, when $\bX_{[0, T]}(\omega)$ is the Stratonovich lift of a continuous semimartingale, we have the chance to apply the Burkholder-Davis-Gundy inequality to get a much nicer decay rate for the $L^q$-norms of $Y^{\lambda, (n)}_T$ without using the greedy sequences, see the next subsection.

\subsection{$X_{[0, T]}$ is a continuous  semimartingale}

In this section, we will focus on the case that $X_{[0, T]}(\omega)$ is a continuous semimartingale. Classical rough path theory tells us that $X_{[0, T]}(\omega)$ has finite $p$-variation for all $p \in (2, 3)$ and its Stratonovich lift
\begin{equation*}
\bX_{[0, T]}(\omega) = \bigg(X_{[0, T]}(\omega), \int X \otimes ( \circ \dd) X (\omega) \bigg)
\end{equation*}
belongs to $G\Omega_p(V)$ for almost all $\omega \in \Omega$, see \cite[Chapter 14]{Friz2010}. Hence, we can and will identify $X_{[0, T]}(\omega)$ with its Stratonovich lift $\bX_{[0, T]}(\omega)$. It is also noteworthy that for almost all $\omega \in \Omega$, the solution to the (pathwise) RDE $\dd Y^\lambda_t(\omega) = Y^\lambda_t(\omega) (\lambda M)(\dd \bX(\omega)_t)$ coincides with the solution to the following Stratonovich stochastic differential equation (SDE) evaluated at $\omega$:
\begin{equation}
\label{eq:semimartingale-development-sde}
 dY^\lambda_t = Y^\lambda_t (\lambda M)( \circ \dd X_t), \quad Y^\lambda_0 = I_k.
\end{equation}
and for any progressively measurable and $X$-integrable process $H(\omega)_{[0, T]}$ such that for almost all $\omega$ it is also an $\bX(\omega)$-controlled path, then the (pathwise) rough integral $\int_0^t H_s(\omega) \dd \bX(\omega)_s$ coincides with the Stratonovich integral $\int_0^t H_s \circ \dd X_s$ evaluated at $\omega$, see e.g. \cite[Lemma 4.35]{Chevyrev2019}. Therefore, in this section, we will only use the Stratonovich type notation and identify $\dd \bX_t$ with $ \circ \dd X_t$. Note that since $M \in \lin(\R^d, \so(k))$ is linear, the SDE \eqref{eq:semimartingale-development-sde} can also be written as
\begin{equation*}
dY^\lambda_t = \sum_{i = 1}^d Y^\lambda_t (\lambda M)(e_i)( \circ \dd X^i_t), \quad Y^\lambda_0 = I_k,
\end{equation*}
for $e_1, \ldots, e_d$ being the canonical basis of $\R^d$. Moreover, in this case we have
\begin{align*}
 Y^{\lambda, (n)}_t = n! \int_{\Delta^n_t} \Ad_{Y^\lambda_{s_1}}(M( \circ \dd X_{s_1})) \cdots \Ad_{Y^\lambda_{s_n}} (M( \circ \dd X_{s_{n}}))Y^\lambda_t,
\end{align*}
where $\int_{\Delta^n_t} \Ad_{Y^\lambda_{s_1}}(M( \circ \dd X_{s_1})) \cdots \Ad_{Y^\lambda_{s_n}} (M( \circ \dd X_{s_{n}}))$ is the iterated Stratonovich integral concatenated by matrix products.


As in \eqref{eq:rough-moving-frame}, we define $\hat X^\lambda_t = \int_0^t \Ad_{Y^\lambda_{s}}(M( \circ \dd X_{s})) = \int_0^t Y^\lambda_s M( \circ \dd X_s) (Y^\lambda_s)^\dagger$, which is obtained by first computing the integral $\bar Y^\lambda_t := \int_0^t Y^\lambda_s M( \circ \dd X_s)$, which is itself a continuous semimartingale, and then computing the Stratonovich integral of $(Y^\lambda_{[0, T]})^\dagger$ against $\bar Y^\lambda_{[0, T]}$. More explicitly, we first obtain
\begin{align*}
 \bar Y^\lambda_t = \int_0^t Y^\lambda_s M( \circ \dd X_s) & = \int_0^t Y^\lambda_s M(\dd X_s) + \frac{1}{2}[ Y^\lambda, M(X) ]_t\\
 & = \int_0^t Y^\lambda_s M(\dd X_s) + \frac{\lambda}{2}\int_0^t Y^\lambda_s \dd[M(X), M(X)]_s,
\end{align*}
where $\int_0^t Y^\lambda_s M(\dd X_s)$ denotes the It\^o integral. Next we note that since $\dd(Y^\lambda_t)^\dagger = \lambda M^\dagger( \circ \dd X_t) (Y^\lambda_t)^\dagger$,  it holds that
\begin{align*}
\bigg[\int_0^ \cdot Y^\lambda_s M( \circ \dd X_s), (Y^\lambda_ \cdot )^\dagger\bigg]_t = \lambda \int_0^t Y^\lambda_s \dd[M(X), M(X)^\dagger]_s (Y^\lambda_s)^\dagger.
\end{align*}
Finally, we obtain that
\begin{align}
\label{eq:moving-frame-ito-representation}
 \hat X^\lambda_t & = \int_0^t ( \circ \dd \bar Y^\lambda_s) (Y^\lambda_s)^\dagger \nonumber\\
 & = \int_0^t Y^\lambda_s M(\dd X_s) (Y^\lambda_s)^\dagger + \frac{\lambda}{2}\int_0^t Y^\lambda_s \dd[M(X), M(X)]_s(Y^\lambda_s)^\dagger \nonumber\\
&\quad + \frac{1}{2}\bigg[\int_0^ \cdot Y^\lambda_s M(\dd X_s), (Y^\lambda_ \cdot )^\dagger\bigg]_t \nonumber\\
& = \int_0^t Y^\lambda_s M(\dd X_s) (Y^\lambda_s)^\dagger + \frac{\lambda}{2}\int_0^t Y^\lambda_s \dd[M(X), M(X)]_s(Y^\lambda_s)^\dagger \nonumber\\
&\quad + \frac{\lambda}{2}\int_0^t Y^\lambda_s \dd[M(X), M(X)^\dagger]_s (Y^\lambda_s)^\dagger \nonumber\\
& = \int_0^t Y^\lambda_s M(\dd X_s) (Y^\lambda_s)^\dagger,
\end{align}
because $M(X) + M(X)^\dagger = 0$ as $M(X) \in \so(k)$. Note that in the above computations we used lots of computation rules for matrix-valued semimartingales, for example here the quadratic variation $[M(X), M(X)]_ \cdot $ is also a matrix and obtained by matrix multiplication. For concrete details on stochastic calculus for matrix-valued semimartingales we refer readers to Appendix \ref{sec:matrix-stochastic-calculus}.

Now, similarly as in \eqref{eq:rough-matrix-signature-recursion}, we define recursively that

\begin{align}
\label{eq:stochastic-matrix-signature-recursion}
 &D_0(\hat X^\lambda_t) = I_k, \quad D_1(\hat X^\lambda_t) = \hat X^\lambda_t, \quad \nonumber \\
 &D_{n + 1}(\hat X^\lambda_t) = \int_0^t D_{n}(\hat X^\lambda_s) \circ \dd \hat X^\lambda_s = \int_{\Delta^{n + 1}_t} \circ \dd\hat X^\lambda_{s_1} \ldots \circ \dd \hat X^\lambda_{s_{n + 1}},
\end{align}
and call them the matrix signature of $\hat X^\lambda_{[0, T]}$. Note that the concatenation is still the matrix multiplication and for notational simplicity we omit the product ``$ \cdot $'' in the above expression. Also keep in mind that for all $n \ge 0$ and all $t \in [0, T]$, we have $\frac{1}{n!}Y^{\lambda, (n)}_t = D_n(\hat X^\lambda_t)Y^\lambda_t$ and hence $\frac{1}{n!}\|Y^{\lambda, (n)}_t\|_{\HS} = \|D_n(\hat X^\lambda_t)\|_{\HS}$.

The next lemma is crucial for us to get a proper upper bound for the decay rate of the expected matrix signature of $\hat X^\lambda_{[0, T]}$.

\begin{lemma}
Assume that $X_{[0, T]}(\omega)$ is a an $\R^d$-valued continuous local martingale such that $\E[\llbracket X, X\rrbracket_T] < \infty$. Then for all $k \ge 1$, $t \in [0, T]$, $\lambda \in \R$ and $M \in \lin(\R^d, \so(k))$, we have $\llbracket \hat X^\lambda, \hat X^\lambda \rrbracket_t = \llbracket M(X), M(X)\rrbracket_t$ and

\begin{align}
\label{eq:moving-frame-second-moment}
 \E[\|\hat X^\lambda_t\|_{\HS}^2] = \E\bigg\|\int_0^t Y^\lambda_s M(\dd X_s) (Y^\lambda_s)^\dagger\bigg\|_{\HS}^2
 = \E[\llbracket M(X), M(X)\rrbracket_t],
\end{align}

and consequently
\begin{equation}
\label{eq:martingale-first-derivative-second-moment}
 \E[\|Y^{\lambda, (1)}_t\|_{\HS}^2] \le \|M\|_{\op}^2 \E[\llbracket X, X \rrbracket_t].
\end{equation}
\end{lemma}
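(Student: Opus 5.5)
The plan is to start from the It\^o representation \eqref{eq:moving-frame-ito-representation}, $\hat X^\lambda_t = \int_0^t Y^\lambda_s M(\dd X_s)(Y^\lambda_s)^\dagger$. Since $X$ is a continuous local martingale, $\hat X^\lambda$ is a matrix-valued continuous local martingale. Writing $M(\dd X_s)=\sum_{i=1}^d M(e_i)\,\dd X^i_s$, its $(a,b)$-entry is $\sum_{i}\int_0^t (Y^\lambda_s M(e_i)(Y^\lambda_s)^\dagger)^{ab}\,\dd X^i_s$.

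The first step is to compute the quadratic variation. I read $\llbracket\cdot,\cdot\rrbracket$ as the scalar bracket $\sum_{a,b}[Z^{ab},Z^{ab}]$, i.e.\ the Hilbert--Schmidt quadratic variation. This gives
\[
\llbracket \hat X^\lambda,\hat X^\lambda\rrbracket_t=\sum_{i,j}\int_0^t \mathrm{Tr}\big(Y^\lambda_s M(e_i)(Y^\lambda_s)^\dagger (Y^\lambda_s M(e_j)(Y^\lambda_s)^\dagger)^\dagger\big)\,\dd[X^i,X^j]_s .
\]
Because $Y^\lambda_s\in O(k)$ by Theorem \ref{thm:rough-development-derivatives}, the trace collapses by cyclicity to $\mathrm{Tr}(M(e_i)M(e_j)^\dagger)$. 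Hence the integrand is $\langle M(e_i),M(e_j)\rangle_{\HS}$, and summing gives exactly $\llbracket M(X),M(X)\rrbracket_t$. The only delicate point is to state precisely which bracket convention is used; the appendix on matrix stochastic calculus would be cited for this.

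Next comes the isometry. Since $\E\llbracket M(X),M(X)\rrbracket_T\le \|M\|_{\op}^2\E\llbracket X,X\rrbracket_T<\infty$, each entry of $\hat X^\lambda$ is an $L^2$-bounded true martingale. The It\^o isometry (or BDG/Doob applied after localizing by stopping times, then monotone convergence) gives $\E\|\hat X^\lambda_t\|_{\HS}^2=\E\llbracket \hat X^\lambda,\hat X^\lambda\rrbracket_t$, which proves \eqref{eq:moving-frame-second-moment}. The bound $\llbracket M(X),M(X)\rrbracket_t\le\|M\|_{\op}^2\llbracket X,X\rrbracket_t$ follows from $\|M(x)\|_{\HS}\le\|M\|_{\op}|x|$ applied to the increments, i.e.\ to the quadratic form $\sum_{i,j}\langle M(e_i),M(e_j)\rangle_{\HS}\,\dd[X^i,X^j]$, which is dominated by $\|M\|_{\op}^2\sum_i \dd[X^i,X^i]$ since $\dd[X^i,X^j]$ is a positive semidefinite matrix measure.

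Finally, $Y^{\lambda,(1)}_t=D_1(\hat X^\lambda_t)Y^\lambda_t=\hat X^\lambda_t Y^\lambda_t$. Right multiplication by an orthogonal matrix preserves the HS norm, so $\|Y^{\lambda,(1)}_t\|_{\HS}=\|\hat X^\lambda_t\|_{\HS}$, and \eqref{eq:martingale-first-derivative-second-moment} follows.

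The main obstacle is justifying that the Stratonovich moving frame equals the It\^o integral. This is already done in \eqref{eq:moving-frame-ito-representation}, so it can be taken as given; what remains is the localization needed for the isometry when $X$ is only a local martingale.
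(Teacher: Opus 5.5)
Your proposal is correct and follows essentially the same route as the paper. Both use the It\^o representation \eqref{eq:moving-frame-ito-representation} and collapse the entrywise bracket via orthogonality of $Y^\lambda_s$; you do this by trace cyclicity in the coordinates $X^i$, while the paper uses the row-sum identities $\sum_i Y^{ip}_sY^{ip'}_s=\delta_{pp'}$. Both then apply the It\^o isometry and the bound $\llbracket M(X),M(X)\rrbracket_t\le\|M\|_{\op}^2\llbracket X,X\rrbracket_t$, which the paper obtains from partition sums in Lemma \ref{lem:linear-map-quadratic-variation-bound} and you from positive semidefiniteness of $\dd[X^i,X^j]$. Your remark that $\E\llbracket \hat X^\lambda,\hat X^\lambda\rrbracket_T<\infty$ makes each entry an $L^2$-bounded true martingale is a point the paper leaves implicit.
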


\begin{proof}
In view of \eqref{eq:moving-frame-ito-representation}, we have $\hat X^\lambda_t = \int_0^t Y^\lambda_s M(\dd X_s) (Y^\lambda_s)^\dagger$. As a consequence, we have (for simplicity, below we write $Y^{ij}$ for $(Y^{\lambda})^{ij}$)
\begin{align*}
 \|\hat X^\lambda_t\|_{\HS}^2 & = \bigg\|\int_0^t Y^\lambda_s M(\dd X_s) (Y^\lambda_s)^\dagger\bigg\|_{\HS}^2\\
 & = \sum_{i, j = 1}^k \bigg|\bigg(\int_0^t Y^\lambda_s M(\dd X_s) (Y^\lambda_s)^\dagger\bigg)_{ij}\bigg|^2\\
 & = \sum_{i, j = 1}^k\bigg| \sum_{p, q = 1}^k \int_0^t Y^{ip}_sM_{pq}(\dd X_s)Y^{jq}_s\bigg|^2\\
 & = \sum_{i, j = 1}^k\sum_{p, q = 1}^k\sum_{p^\prime, q^\prime = 1}^k \int_0^t Y^{ip}_sM_{pq}(\dd X_s)Y^{jq}_s \int_0^t Y^{ip^\prime}_sM_{p^\prime q^\prime}(\dd X_s)Y^{jq^\prime}_s.
\end{align*}
For all $i, j, p, q, p^\prime, q^\prime = 1, \ldots, k$, it holds that
\begin{align*}
 \bigg[\int_0^ \cdot Y^{ip}_sM_{pq}(\dd X_s)Y^{jq}_s, \int_0^ \cdot &Y^{ip^\prime}_sM_{p^\prime q^\prime}(\dd X_s)Y^{jq^\prime}_s\bigg]_t = \\
 &\int_0^t Y^{ip}_s Y^{jq}_s Y^{ip^\prime}_s Y^{jq^\prime}_s \dd[M_{pq}(X), M_{p^\prime q^\prime}(X)]_s,
\end{align*}
which implies that
\begin{align*}
 \E[\|\hat X^\lambda_t\|_{\HS}^2]
 & = \sum_{i, j, p, q, p^\prime, q^\prime = 1}^k \E\bigg[\int_0^t Y^{ip}_s Y^{jq}_s Y^{ip^\prime}_s Y^{jq^\prime}_s d[M_{pq}(X), M_{p^\prime q^\prime}(X)]_s\bigg]\\
& = \sum_{p, q = 1}^k\sum_{p^\prime, q^\prime = 1}^k\E\bigg[ \sum_{i, j = 1}^k \int_0^t Y^{ip}_s Y^{ip^\prime}_s Y^{jq}_s Y^{jq^\prime}_s d[M_{pq}(X), M_{p^\prime q^\prime}(X)]_s\bigg].
\end{align*}
Since $Y^\lambda_ \cdot $ is an orthogonal matrix, we have
\begin{equation*}
\sum_{i = 1}^k Y^{ip}_sY^{ip^\prime}_s = \delta_{pp^\prime}, \quad \sum_{j = 1}^k Y^{jq}_sY^{jq^\prime}_s = \delta_{qq^\prime},
\end{equation*}
which in turn implies that
\begin{align*}
 \E[\|\hat X^\lambda_t\|_{\HS}^2] = \sum_{p, q = 1}^k \E[[M_{pq}(X), M_{pq}(X)]_t] = \E[\llbracket M(X), M(X)\rrbracket_t],
\end{align*}
which is \eqref{eq:moving-frame-second-moment}. The same entrywise bracket calculation, without taking expectations, gives $\llbracket \hat X^\lambda, \hat X^\lambda \rrbracket_t = \llbracket M(X), M(X)\rrbracket_t$. This together with Lemma \ref{lem:linear-map-quadratic-variation-bound} gives us that \eqref{eq:martingale-first-derivative-second-moment}.
\end{proof}
Of course, from \eqref{eq:martingale-first-derivative-second-moment} we can deduce that for $t \in [0, T]$,
\begin{equation}
\label{eq:martingale-first-derivative-l2-bound}
 \|Y^{\lambda, (1)}_t\|_{L^2} = \sqrt{\E[\|Y^{\lambda, (1)}_t\|^2_{\HS}]} \le \|M\|_{\op} \sqrt{\E[ \llbracket X, X \rrbracket_t ]}.
\end{equation}

\subsubsection{Semimartingales on a deterministic time horizon}\label{sec:deterministic-horizon-semimartingales}

In this subsection we will investigate the $L^q$-bounds (in particular, for $q = 1, 2$) for all $n$-th derivatives of the unitary development $Y^\lambda_t(\omega)$ with respect to $\lambda$ for some continuous semimartingales $X_{[0, T]}(\omega)$ defined on a deterministic time horizon $[0, T]$.

Let us start with the simplest case that $X$ is a Brownian motion. But the same idea will be carried out for more complicated semimartingales.

\begin{theorem}\label{thm:brownian-development-derivative-bound}
Assume that $X = (X_t)_{t \in [0, T]}$ is an $\R^d$-valued Brownian motion. Then for all $k \ge 1$, $n \geq 1$, $t \in [0, T]$, $M \in \lin(\R^d, \so(k))$ and $\lambda \in \R$, it holds that
\begin{equation}
\label{eq:brownian-development-derivative-bound}
 \frac{1}{n!}\| Y^{\lambda, (n)}_t \|_{L^2} = \frac{1}{n!}\E[\|Y^{\lambda, (n)}_t\|^2_{\HS}]^{\frac{1}{2}} \leq \frac{2^n (C(||M||_{\op} \vee 1)^2)^n t^{\frac{n}{2}}}{\sqrt{n!}},
\end{equation}
where $C = C(k, d)$ is a constant only depending on the dimensions $k$ and $d$.
\end{theorem}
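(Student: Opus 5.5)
We work with the matrix signature of the moving frame. Since $\frac{1}{n!}\|Y^{\lambda,(n)}_t\|_{\HS} = \|D_n(\hat X^\lambda_t)\|_{\HS}$, it suffices to prove
\begin{equation*}
\E\big[\|D_n(\hat X^\lambda_t)\|_{\HS}^2\big]^{1/2} \le \frac{2^n (C(\|M\|_{\op}\vee 1)^2)^n t^{n/2}}{\sqrt{n!}} .
\end{equation*}
We argue by induction on $n$, using the recursion \eqref{eq:stochastic-matrix-signature-recursion}. The case $n=1$ is essentially \eqref{eq:martingale-first-derivative-l2-bound} with $\E[\llbracket X,X\rrbracket_t]=dt$.

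\textbf{Step 1: It\^o form of the recursion.} By \eqref{eq:moving-frame-ito-representation}, $\hat X^\lambda_t=\int_0^t Y^\lambda_s M(\dd X_s)(Y^\lambda_s)^\dagger$ is an It\^o local martingale. Its bracket is $\llbracket\hat X^\lambda,\hat X^\lambda\rrbracket_t=\llbracket M(X),M(X)\rrbracket_t$, and for Brownian $X$ this is bounded by $C_0\|M\|_{\op}^2 t$. We convert the Stratonovich recursion into It\^o form:
\begin{equation*}
D_{n+1}(\hat X^\lambda_t)=\int_0^t D_n(\hat X^\lambda_s)\,\dd\hat X^\lambda_s+\tfrac12\int_0^t \dd\big[D_n(\hat X^\lambda),\hat X^\lambda\big]_s .
\end{equation*}
Since $\dd D_n = D_{n-1}\circ \dd\hat X^\lambda$, the martingale part of $D_n$ is $D_{n-1}\dd\hat X^\lambda$. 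The correction term is therefore $\frac12\int_0^t D_{n-1}(\hat X^\lambda_s)\,\dd[\hat X^\lambda,\hat X^\lambda]_s$ (matrix bracket). For Brownian motion it equals $\frac12\int_0^t D_{n-1}(\hat X^\lambda_s)\,Y^\lambda_s\Big(\sum_{i}M(e_i)^2\Big)(Y^\lambda_s)^\dagger\,\dd s$, and its integrand has HS-norm at most $C_1\|M\|_{\op}^2\|D_{n-1}(\hat X^\lambda_s)\|_{\HS}$.

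\textbf{Step 2: Second-moment recursion.} Write $a_n(t):=\E\|D_n(\hat X^\lambda_t)\|_{\HS}^2$. For the martingale term we apply the It\^o isometry entrywise, exactly as in the proof of \eqref{eq:moving-frame-second-moment}, using orthogonality of $Y^\lambda_s$ to remove the conjugations. This gives $\E\|\int_0^t D_n\dd\hat X^\lambda\|_{\HS}^2\le C_2\|M\|_{\op}^2\int_0^t a_n(s)\dd s$. For the drift term, Cauchy--Schwarz gives $\le C_1^2\|M\|_{\op}^4\, t\int_0^t a_{n-1}(s)\dd s$. Using $(x+y)^2\le 2x^2+2y^2$ we arrive at
\begin{equation*}
a_{n+1}(t)\le 2C(\|M\|_{\op}\vee1)^4\Big(\int_0^t a_n(s)\,\dd s + t\int_0^t a_{n-1}(s)\,\dd s\Big).
\end{equation*}

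\textbf{Step 3: Solving the recursion.} Set $K:=C(\|M\|_{\op}\vee 1)^4$ with $C$ enlarged if necessary. We verify by induction the ansatz $a_n(t)\le 4^n K^n t^n/n!$, with $a_0=k$ absorbed into $C$. The first integral contributes $2K\cdot 4^nK^n t^{n+1}/(n+1)!$. The second contributes $2K\,t\cdot 4^{n-1}K^{n-1}t^{n}/n! = 2\cdot4^{n-1}K^n t^{n+1}(n+1)/(n+1)!$. This second term is the \textbf{main obstacle}: its factor $(n+1)$ is not absorbed by $4^{n+1}K^{n+1}/(n+1)!$ for large $n$ if one is careless. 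It is where the drift correction (absent in the smooth case) competes with the factorial decay. To fix it, we use that $t\le T$ only through the homogeneity $t^{n/2}$. We therefore keep the time scaling exact instead, because Brownian scaling gives $a_n(t)=t^n a_n(1)$ (the law of $D_n(\hat X^{\lambda}_t)$ under $X\mapsto \sqrt{t}X_{\cdot/t}$, together with $\lambda\mapsto\lambda\sqrt t$, is uniform in $\lambda$). With this, both integrals become $\int_0^t s^n\dd s=t^{n+1}/(n+1)$ and $t\int_0^t s^{n-1}\dd s=t^{n+1}/n$. The recursion for $b_n:=a_n(1)$ reads $b_{n+1}\le 2K(b_n/(n+1)+b_{n-1}/n)$. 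Inductively $b_n\le 4^nK^n/n!$ works provided $2K\big(\frac{4^nK^n}{(n+1)!}+\frac{4^{n-1}K^{n-1}(n+1)}{(n+1)!}\big)\le \frac{4^{n+1}K^{n+1}}{(n+1)!}$. This needs $n+1\le 2\cdot 4K^{2}$, so it still fails for large $n$. If this persists, we will instead estimate the drift term via a BDG/Gr\"onwall bound on the joint vector $(D_{n-1},D_n)$ or absorb the drift into a Girsanov-free rewriting. A cleaner option is to note that the correction $\frac12\sum_i Y M(e_i)^2 Y^\dagger$ can be paired with $D_{n-1}$ so that the drift in $D_{n+1}$ is $\int D_{n-1}\,\dd A$ with $A$ of bounded variation and $|A|_{1\var,[0,t]}\le Kt$. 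Expanding $D_n$ into mixed iterated integrals of $\dd\hat X$ (variance $\sim t^{j}/j!$) and $\dd A$ (size $t^{m}/m!$), with $j+2m=n$, the number of words is $\le 2^n$. Each term has $L^2$-norm $\lesssim K^{n/2}t^{n/2}/\sqrt{j!}\,m!\le K^{n/2}t^{n/2}\cdot 2^{n/2}/\sqrt{n!}$. Summing gives $2^n(CK^{1/2})^n t^{n/2}/\sqrt{n!}$, which is the claimed bound, and this word-expansion route is the one we would ultimately rely on.
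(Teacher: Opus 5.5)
There is a genuine gap, and it sits exactly at what you call the ``main obstacle''. Your Step 1 is the paper's decomposition, so the problem is not there. The obstacle is created in Step 2, where you bound the correction term by Cauchy--Schwarz in time, $\E\|\int_0^t F_s\,\dd s\|_{\HS}^2\le t\int_0^t\E\|F_s\|_{\HS}^2\,\dd s$. When $\|F_s\|_{L^2}\approx s^{(n-1)/2}/\sqrt{(n-1)!}$ this loses a factor of order $n$: $\big(\int_0^t s^{(n-1)/2}\,\dd s\big)^2=4t^{n+1}/(n+1)^2$, whereas $t\int_0^t s^{n-1}\,\dd s=t^{n+1}/n$. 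So the drift does not genuinely compete with the factorial decay, and Brownian scaling cannot help, as you found.

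The paper runs the induction on $\|D_n(\hat X^\lambda_t)\|_{L^2}$ itself, not on its square. It controls the correction with Minkowski's integral inequality and Lemma~\ref{lem:brownian-moving-frame-bracket-bound}: for $n\ge 2$,
\begin{equation*}
\frac12\bigg\|\int_0^t D_{n-1}(\hat X^\lambda_s)\,\dd[\hat X^\lambda,\hat X^\lambda]_s\bigg\|_{L^2}\le\frac{C\|M\|_{\op}^2}{2}\int_0^t\|D_{n-1}(\hat X^\lambda_s)\|_{L^2}\,\dd s\le\frac{2^{n-1}(C\|M\|_{\op}^2)^n\,t^{\frac{n+1}{2}}}{(n+1)\sqrt{(n-1)!}},
\end{equation*}
and $(n+1)\sqrt{(n-1)!}\ge\sqrt{(n+1)!}$ closes this term. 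The martingale term has the It\^o-isometry bound $2^n(C\|M\|_{\op}^2)^{n+1}t^{\frac{n+1}{2}}/\sqrt{(n+1)!}$. The triangle inequality then gives the factor $2^n+2^{n-1}\le 2^{n+1}$, and the induction is complete. Even within your squared recursion, replacing $t\int_0^t a_{n-1}(s)\,\dd s$ by $\big(\int_0^t a_{n-1}(s)^{1/2}\,\dd s\big)^2$ makes your ansatz $a_n\le 4^nK^nt^n/n!$ close.

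The word-expansion route you say you would ultimately rely on does not remove the gap. The per-word estimate $\lesssim K^{n/2}t^{n/2}/(\sqrt{j!}\,m!)$, for iterated integrals in which $\dd\hat X^\lambda$ and $\dd A$ are interleaved, is asserted without proof, and it is precisely where the difficulty lives. To prove it by induction over the letters, you must use It\^o isometry for the $\dd\hat X^\lambda$-letters and Minkowski, as above, for each $\dd A$-letter. Done that way it in fact gives the stronger bound $\sqrt{k}\,c^{j/2}(2K)^m t^{n/2}/\sqrt{n!}$. If you instead use the time Cauchy--Schwarz of Step 2, each $\dd A$-letter loses a factor comparable to the current degree, and the word bound fails. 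Once the Minkowski step is in place the expansion is superfluous, since the direct induction already closes.

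A minor point: your counting inequality $1/(\sqrt{j!}\,m!)\le 2^{n/2}/\sqrt{n!}$ is false in general. For $n=6$, $j=m=2$ one has $n!/(j!\,(m!)^2)=90>2^6$. The correct bound uses the trinomial coefficient $n!/(j!\,m!\,m!)\le 3^n$, which only changes the constant.
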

\begin{proof}
Without loss of generality, we may assume that $\|M\|_{\op} \ge 1$. It suffices to show that $\|D_n(\hat{X}^{\lambda}_{t})\|_{L^2}$ satisfies the bound \eqref{eq:brownian-development-derivative-bound}. For this aim, we will apply an induction on $n \ge 1$. Clearly, if $n = 1$, then by \eqref{eq:martingale-first-derivative-l2-bound} we see that
\begin{equation*}
\|Y^{\lambda, (1)}_t\|_{L^2} \le \sqrt{d}\|M\|_{\op} t^{\frac{1}{2}},
\end{equation*}
and therefore the bound \eqref{eq:brownian-development-derivative-bound} is satisfied by choosing $C = C(k, d) \ge d$ as in Lemma \ref{lem:brownian-moving-frame-bracket-bound}, enlarged if necessary. Now assume that the bound \eqref{eq:brownian-development-derivative-bound} holds for $D_m(\hat X^\lambda_t)$ for all $1 \le m \le n$ with some $n \ge 1$, and we consider the case $n + 1$. Using the Itô-Stratonovich correction \eqref{eq:matrix-ito-stratonovich-correction}, we have
\begin{equation*}
D_{n + 1}(\hat{X}^{\lambda}_{t})
 =
\int_0^t
D_{n}(\hat{X}^{\lambda}_{s})\dd \hat{X}_s^\lambda
 +
\frac12
\int_0^t
\dd\bigl[ D_{n}(\hat{X}^{\lambda}_{ \cdot }), \hat{X}^\lambda\bigr]_s,
\end{equation*}
which implies that
\begin{equation*}
\|D_{n + 1}(\hat{X}^{\lambda}_{t})\|_{L^2} \le \bigg\|\int_0^t
D_{n}(\hat{X}^{\lambda}_{s})\dd\hat{X}_s^\lambda\bigg\|_{L^2}
 +
\frac12
\bigg\|\int_0^t
d\bigl[ D_{n}(\hat{X}^{\lambda}_{ \cdot }), \hat{X}^\lambda\bigr]_s\bigg\|_{L^2}.
\end{equation*}
\begin{enumerate}
\item First let us given an upper bound for $\|\int_0^t D_{n}(\hat{X}^{\lambda}_{s})\dd\hat{X}_s^\lambda \|_{L^2}$. Using the It\^o isometry formula \eqref{eq:matrix-ito-isometry}, we obtain that
\begin{align*}
 \bigg\|\int_0^t
D_{n}(\hat{X}^{\lambda}_{s})\dd\hat{X}_s^\lambda \bigg\|_{L^2}^2 & = \E\bigg[ \bigg\|\int_0^t
D_{n}(\hat{X}^{\lambda}_{s})\dd\hat{X}_s^\lambda \bigg\|_{\HS}^2\bigg] \\
& = \E\bigg[ \int_0^t
\text{Tr}\bigg(D_{n}(\hat{X}^{\lambda}_{s})\dd[\hat{X}^\lambda, (\hat{X}^\lambda)^\dagger]_s D_{n}(\hat{X}^{\lambda}_{s})^\dagger\bigg)\bigg].
\end{align*}
By Lemma \ref{lem:brownian-moving-frame-bracket-bound}, there exists a constant $C = C(k, d)$ such that
\begin{equation*}
\int_0^t
\text{Tr}\bigg(D_{n}(\hat{X}^{\lambda}_{s})\dd[\hat{X}^\lambda, (\hat{X}^\lambda)^\dagger]_s D_{n}(\hat{X}^{\lambda}_{s})^\dagger\bigg) \le C^2\|M\|_{\op}^2\int_0^t \|D_{n}(\hat{X}^{\lambda}_{s})\|_{\HS}^2 \dd s,
\end{equation*}
and hence
\begin{equation*}
\bigg\|\int_0^t
D_{n}(\hat{X}^{\lambda}_{s})\dd \hat{X}_s^\lambda \bigg\|_{L^2}^2 \le C^2\|M\|_{\op}^2 \int_0^t \E[\|D_{n}(\hat{X}^{\lambda}_{s})\|_{\HS}^2] \dd s.
\end{equation*}
Inserting the induction hypothesis, i.e.,
\begin{equation*}
\E[\|D_{n}(\hat{X}^{\lambda}_{s})\|_{\HS}^2] \le \bigg(\frac{2^{n}(C\|M\|^2_{\op})^{n}s^{\frac{n}{2}}}{\sqrt{n!}}\bigg)^2,
\end{equation*}
we can obtain that
\begin{align*}
 \int_0^t \E[\|D_{n}(\hat{X}^{\lambda}_{s})\|_{\HS}^2] \dd s & \le (C\|M\|^2_{\op})^{2n}\int_0^t \frac{4^ns^n}{n!}\dd s\\
 & \le (C\|M\|^2_{\op})^{2n}4^n\frac{t^{n + 1}}{(n + 1)!},
\end{align*}
which in turn implies that
\begin{align*}
 \bigg\|\int_0^t
D_{n}(\hat{X}^{\lambda}_{s})\dd \hat{X}_s^\lambda \bigg\|_{L^2}^2 & \le C^2\|M\|_{\op}^2 \int_0^t \E[\|D_{n}(\hat{X}^{\lambda}_{s})\|_{\HS}^2] \dd s\\
& \le (C\|M\|^2_{\op})^{2(n + 1)}4^n\frac{t^{n + 1}}{(n + 1)!}
\end{align*}
and hence
\begin{equation}
\label{eq:brownian-martingale-term-bound}
 \bigg\|\int_0^t
D_{n}(\hat{X}^{\lambda}_{s})\dd\hat{X}_s^\lambda \bigg\|_{L^2} \le (C\|M\|^2_{\op})^{n + 1}2^n\frac{t^{\frac{n + 1}{2}}}{\sqrt{(n + 1)!}}.
\end{equation}
\item Now let us turn to the second term $\frac{1}{2}\| \int_0^t \dd[D_n(\hat X^\lambda), \hat X^\lambda]_s \|_{L^2}$. Recall that $D_n(\hat X^\lambda_t) = \int_0^t D_{n - 1}(\hat X^\lambda_s) \circ \dd\hat X^\lambda_s$, so we can use \eqref{eq:matrix-integral-bracket-rule} to get
\begin{equation*}
[D_n(\hat X^\lambda), \hat X^\lambda]_t = \bigg[ \int_0^ \cdot D_{n - 1}(\hat X^\lambda_s) \circ \dd\hat X^\lambda_s, \hat X^\lambda\bigg]_t = \int_0^t D_{n - 1}(\hat X^\lambda_s) \dd[\hat X^\lambda, \hat X^\lambda]_s.
\end{equation*}
By the second inequality from Lemma \ref{lem:brownian-moving-frame-bracket-bound} below, we obtain that
\begin{equation*}
\bigg\|\int_0^t D_{n - 1}(\hat X^\lambda_s) \dd[\hat X^\lambda, \hat X^\lambda]_s \bigg\|_{L^2} \le C\|M\|_{\op}^2 \int_0^t \|D_{n - 1}(\hat X^\lambda_s)\|_{L^2} \dd s.
\end{equation*}
For $n = 1$, the estimate \eqref{eq:brownian-correction-term-bound} follows directly from
\begin{equation*}
\dd[\hat X^\lambda, \hat X^\lambda]_s
 = Y^\lambda_s\bigg(\sum_{i = 1}^d M(e_i)^2\bigg)(Y^\lambda_s)^\dagger\,\dd s,
\qquad
\bigg\|\sum_{i = 1}^d M(e_i)^2\bigg\|_{\HS} \le d\|M\|_{\op}^2,
\end{equation*}
because
\begin{equation*}
\frac12\|[\hat X^\lambda, \hat X^\lambda]_t\|_{L^2}
 \le \frac d2\|M\|_{\op}^2t
 \le \frac{C\|M\|_{\op}^2t}{\sqrt2}.
\end{equation*}
For $n \ge 2$, using the induction hypothesis
\begin{equation*}
\|D_{n - 1}(\hat X^\lambda_s)\|_{L^2} \le \frac{2^{n - 1} (C||M||_{\op}^2)^{n - 1} s^{\frac{n - 1}{2}}}{\sqrt{(n - 1)!}},
\end{equation*}
we can further derive that
\begin{align*}
 \bigg\|\int_0^t D_{n - 1}(\hat X^\lambda_s) \dd[\hat X^\lambda, \hat X^\lambda]_s \bigg\|_{L^2} & \le C\|M\|_{\op}^2 \int_0^t \|D_{n - 1}(\hat X^\lambda_s)\|_{L^2} \dd s \\
 & \le 2^{n - 1}(C||M||_{\op}^2)^{n}\int_0^t \frac{s^{\frac{n - 1}{2}}}{\sqrt{(n - 1)!}} \dd s\\
 & = 2^n(C||M||_{\op}^2)^{n}\frac{t^{\frac{n + 1}{2}}}{\sqrt{(n - 1)!}(n + 1)}.
\end{align*}
Since $(n + 1)\sqrt{(n - 1)!} = \sqrt{ (n + 1)^2\frac{(n + 1)!}{(n + 1)n} } = \sqrt{(n + 1)! \frac{n + 1}{n}} \ge \sqrt{(n + 1)!}$, we finally obtain that
\begin{align}
\label{eq:brownian-correction-term-bound}
 \frac{1}{2} \bigg\|\int_0^t D_{n - 1}(\hat X^\lambda_s) \dd[\hat X^\lambda, \hat X^\lambda]_s \bigg\|_{L^2} \le 2^{n - 1}(C||M||_{\op}^2)^{n}\frac{t^{\frac{n + 1}{2}}}{\sqrt{(n + 1)!}}.
\end{align}
\end{enumerate}
Putting \eqref{eq:brownian-martingale-term-bound} and \eqref{eq:brownian-correction-term-bound} together, we finally obtain that
\begin{align*}
 \|D_{n + 1}(\hat X^\lambda_t)\|_{L^2} & \le (C\|M\|^2_{\op})^{n + 1}2^n\frac{t^{\frac{n + 1}{2}}}{\sqrt{(n + 1)!}} + 2^{n - 1}(C||M||_{\op}^2)^{n}\frac{t^{\frac{n + 1}{2}}}{\sqrt{(n + 1)!}}\\
 & \le 2^{n + 1}(C\|M\|^2_{\op})^{n + 1}\frac{t^{\frac{n + 1}{2}}}{\sqrt{(n + 1)!}},
\end{align*}
which completes the induction proof.
\end{proof}


\begin{remark}
For an $\R^d$-valued Brownian motion $X$, then one can apply the same induction argument as in the proof of Theorem \ref{thm:brownian-development-derivative-bound} show that for each $n \ge 0$ and $t \ge 0$,
\begin{equation*}
\|S(X)^n_{0, t}\|_{L^2} \le \frac{2^n C_d^nt^{\frac{n}{2}}}{\sqrt{n!}},
\end{equation*}
for some constant $C_d$ depending only on $d$, which means that the expected signature $\ESig(X_{[0, T]})$ of Brownian motion on deterministic time horizon $[0, T]$ has infinite radius of convergence. Therefore, the bound \eqref{eq:brownian-development-derivative-bound} tells us that the $L^2$-norm of the $n$-th derivative of the development $Y^\lambda_t$ for $X$ along the operator $\lambda M \in \lin(\R^d, \so(k))$  with respect to $\lambda$ (divided by $n!$), or equivalently the matrix signature $D_n(\hat X^\lambda)$, have the same decay rate as the $n$-th projection of the signature $S(X)$ of the underlying Brownian motion $X$, up to a constant only relying on the matrix order $k$, the dimension of Brownian motion $d$, and the operator norm $\|M\|_{\op}$, but independent of the scale parameter $\lambda$. Moreover, from the factorial decay rate \eqref{eq:brownian-development-derivative-bound} and Theorem \ref{thm:analytic-expectation} we can immediately see that for Brownian motion $X$, the Taylor expansion of $\lambda \mapsto \phi_X(\lambda M)$ at any $\lambda_0 \in \R$ has an infinite radius of convergence and therefore the distribution $\mu_{S(X)}$ is determined by $\ESig(X)$ by Proposition \ref{prop:roc-determinacy-criterion}.
\end{remark}

It is easy to see that the idea of the proof of Theorem \ref{thm:brownian-development-derivative-bound} remains valid for any continuous local martingale with deterministic quadratic variation and deterministic initial value, or equivalently speaking, continuous Gaussian local martingales with deterministic initial value (see e.g. \cite{YorRevuz1999}). The proof is very similar to the proof for Brownian motion, so we omit the concrete details here.

\begin{corollary}\label{cor:gaussian-martingale-derivative-bound}
Let $X = (X_t)_{t \in [0, T]}$ be an $\R^d$-valued continuous Gaussian local martingale with deterministic initial value and $[X^i, X^j]_t = q^{ij}_t$ for $i, j = 1, \ldots, d$ and $t \in [0, T]$, where each $q^{ij}_{[0, T]}$ is a deterministic function with bounded variation.  Then for all $k \ge 1$, $M \in \lin(\R^d, \so(k))$, $n \geq 1$, $t \in [0, T]$ and $\lambda \in \R$, let $Y^\lambda_{[0, T]}$ be the random development path induced by $\lambda M$ and $X$, it holds that
\begin{equation*}
 \frac{1}{n!}\| Y^{\lambda, (n)}_t \|_{L^2} = \frac{1}{n!}\E[\|Y^{\lambda, (n)}_t\|^2_{\HS}]^{\frac{1}{2}} \leq
 \frac{2^n (C(||M||_{\op} \vee 1)^2)^n \llbracket X, X \rrbracket_t^{\frac{n}{2}}}{\sqrt{n!}},
\end{equation*}
where $C = C(k, d) \ge 1$ is a constant only depending on the dimensions $k$ and $d$ and $\llbracket X, X \rrbracket_t = \sum_{i = 1}^d [X^i, X^i]_t = \sum_{i = 1}^d q^{ii}_t$.
\end{corollary}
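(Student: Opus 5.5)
I would mirror the proof of Theorem \ref{thm:brownian-development-derivative-bound} step by step, with the Lebesgue clock $\dd s$ replaced by the deterministic, nondecreasing clock $a_t := \llbracket X, X\rrbracket_t = \sum_{i=1}^d q^{ii}_t$. As in that proof, I may assume $\|M\|_{\op} \ge 1$ and work with $D_n(\hat X^\lambda_t)$, since $\frac{1}{n!}\|Y^{\lambda,(n)}_t\|_{\HS} = \|D_n(\hat X^\lambda_t)\|_{\HS}$. Since the initial value is deterministic and only increments matter, I take $X_0 = 0$. Then $X$ is a continuous local martingale with deterministic brackets, so it is a Gaussian martingale with $\E[\llbracket X,X\rrbracket_T] < \infty$, and \eqref{eq:moving-frame-ito-representation} together with \eqref{eq:martingale-first-derivative-l2-bound} apply. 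In particular the base case $n=1$ reads $\|D_1(\hat X^\lambda_t)\|_{L^2} \le \|M\|_{\op} a_t^{1/2}$, which is within the claimed bound once $C \ge 1$.

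The first key step is the analogue of Lemma \ref{lem:brownian-moving-frame-bracket-bound} for this clock. From $\hat X^\lambda = \int Y^\lambda M(\dd X)(Y^\lambda)^\dagger$ we have entrywise
\begin{equation*}
\dd[\hat X^\lambda,(\hat X^\lambda)^\dagger]_s = \sum_{i,j=1}^d Y^\lambda_s M(e_i)M(e_j)^\dagger (Y^\lambda_s)^\dagger\,\dd q^{ij}_s .
\end{equation*}
Since $(q^{ij})$ is a positive semidefinite increment measure, Kunita--Watanabe gives $|\dd q^{ij}| \le \frac12(\dd q^{ii}+\dd q^{jj})$. Orthogonality of $Y^\lambda$ then yields the two estimates
\begin{equation*}
\int_0^t \mathrm{Tr}\big(D_n\,\dd[\hat X^\lambda,(\hat X^\lambda)^\dagger]\,D_n^\dagger\big) \le C^2\|M\|_{\op}^2\int_0^t\|D_n(\hat X^\lambda_s)\|_{\HS}^2\,\dd a_s
\end{equation*}
and
\begin{equation*}
\Big\|\int_0^t D_{n-1}\,\dd[\hat X^\lambda,\hat X^\lambda]\Big\|_{L^2} \le C\|M\|_{\op}^2\int_0^t\|D_{n-1}(\hat X^\lambda_s)\|_{L^2}\,\dd a_s ,
\end{equation*}
with $C = C(k,d)$. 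Because $a$ is deterministic, the expectation can be moved inside the $\dd a_s$-integral in the second estimate via Minkowski's integral inequality.

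In the induction step I use the It\^o--Stratonovich decomposition \eqref{eq:matrix-ito-stratonovich-correction}. The martingale term is handled by the It\^o isometry \eqref{eq:matrix-ito-isometry} and the first estimate, and the correction term by \eqref{eq:matrix-integral-bracket-rule} and the second estimate, exactly as before. The only replacement is the pair of calculus identities
\begin{equation*}
\int_0^t \frac{a_s^n}{n!}\,\dd a_s \le \frac{a_t^{n+1}}{(n+1)!}, \qquad \int_0^t a_s^{\frac{n-1}{2}}\,\dd a_s = \frac{2}{n+1}\,a_t^{\frac{n+1}{2}} .
\end{equation*}
Both hold for a continuous nondecreasing $a$ by the change of variables $u = a_s$; here continuity of $a$ follows from the continuity of the brackets of a continuous local martingale. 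After these substitutions the constant bookkeeping is identical to the Brownian case and closes the induction with $t$ replaced by $a_t$.

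The main obstacle is the first step, namely controlling the off-diagonal brackets $q^{ij}$, $i \ne j$, by $\dd a$ with a constant depending only on $k$ and $d$. I would do this via the Kunita--Watanabe or Cauchy--Schwarz argument above. This is also where nonnegativity and monotonicity of $a$ are needed so that it can serve as a clock. A secondary technical point is that true integrability (not merely local martingality) is needed for the isometry; it holds because the brackets are deterministic and bounded on $[0,T]$.
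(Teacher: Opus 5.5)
Your proposal is correct and follows the route the paper indicates. The paper omits this proof and only says that the argument of Theorem \ref{thm:brownian-development-derivative-bound} carries over to continuous local martingales with deterministic brackets. Your three ingredients supply exactly the omitted details: replacing $\dd s$ by the deterministic clock $\dd\llbracket X,X\rrbracket_s$, bounding the off-diagonal brackets $\dd q^{ij}$ by Kunita--Watanabe in the analogue of Lemma \ref{lem:brownian-moving-frame-bracket-bound}, and using the change of variables $u=a_s$ for continuous nondecreasing $a$.
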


We can also extend Theorem \ref{thm:brownian-development-derivative-bound} to $L^2$ bounds (and hence to $L^q$ bounds for $1 \le q \le 2$) for certain class of It\^o processes $X_t = x + \int_0^t b_s \dd s + \int_0^t H_s \dd W_s, t \in [0, T]$, where $x \in \R^d$, $W$ is an $\R^e$-valued Brownian motion and the process $\omega \mapsto (b_{[0, T]}(\omega), H_{[0, T]}(\omega))$ is a progressively measurable  with sufficient integrability.

\begin{corollary}\label{cor:ito-semimartingale-derivative-bound}
Assume that process $X_t = x + \int_0^t b_s \dd s + \int_0^t H_s \dd W_s, t \in [0, T]$ is defined on some filtered probability space $(\Omega, \bbF, \bbP)$, where $x \in \R^d$, $e \ge 1$, $W$ is an $\R^e$-valued Brownian motion and $(b_t, H_t)_{t \in [0, T]}$ is an $\R^d \times \lin(\R^e, \R^d)$-valued progressively measurable process such that $\eta_T := \int_0^ T(\|b_s\|_{L^\infty} + \|H_s\|^2_{L^\infty}) \dd s < \infty$. Then for all $k \ge 1$, $M \in \lin(\R^d, \so(k))$, $n \geq 1$, $t \in [0, T]$ and $\lambda \in \R$,  it holds that
\begin{equation}
\label{eq:ito-semimartingale-derivative-bound}
 \frac{1}{n!}\| Y^{\lambda, (n)}_t \|_{L^{2}} \leq
 \frac{4^n (C(||M||_{\op} \vee 1)^2)^n \eta_t^{\frac{n}{2}}(1 + \eta_t)^{\frac{n}{2}}}{\sqrt{n!}},
\end{equation}
where $C = C(k, d) \ge 1$ is a constant only depending on $k, d$ and $\eta_t = \int_0^ t(\|b_s\|_{L^\infty} + \|H_s\|^2_{L^\infty}) \dd s$.
\end{corollary}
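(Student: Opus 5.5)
The plan is to follow the induction of Theorem \ref{thm:brownian-development-derivative-bound}, controlling $\|D_n(\hat X^\lambda_t)\|_{L^2}$, since $\frac{1}{n!}\|Y^{\lambda,(n)}_t\|_{L^2} = \|D_n(\hat X^\lambda_t)\|_{L^2}$. We may assume $\|M\|_{\op}\ge 1$. The new ingredient is a drift. Write $X = x + A + N$ with $A_t = \int_0^t b_s\,\dd s$ and $N_t = \int_0^t H_s\,\dd W_s$. The computation leading to \eqref{eq:moving-frame-ito-representation} only used that the Stratonovich correction of $M(X)$ comes from the bracket, and $[M(X),M(X)^\dagger]$ only sees $N$. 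Hence
\begin{equation*}
\hat X^\lambda_t = \int_0^t Y^\lambda_s M(\dd N_s)(Y^\lambda_s)^\dagger + \int_0^t Y^\lambda_s M(b_s)(Y^\lambda_s)^\dagger\,\dd s,
\end{equation*}
with $\dd[\hat X^\lambda,\hat X^\lambda]_s = Y^\lambda_s\big(\sum_{i,j} M(e_i)M(e_j)(H_sH_s^\dagger)_{ij}\big)(Y^\lambda_s)^\dagger\,\dd s$.

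Since $Y^\lambda$ is orthogonal, the analogue of Lemma \ref{lem:brownian-moving-frame-bracket-bound} gives the following pathwise bounds, uniform in $\lambda$:
\begin{equation*}
\mathrm{Tr}\big(D\,\dd[\hat X^\lambda,(\hat X^\lambda)^\dagger]\,D^\dagger\big)\le C^2\|M\|_{\op}^2\|H_s\|^2_{L^\infty}\|D\|_{\HS}^2\,\dd s,
\end{equation*}
the bracket density is bounded by $C\|M\|_{\op}^2\|H_s\|_{L^\infty}^2$, and the drift density is bounded by $C\|M\|_{\op}\|b_s\|_{L^\infty}$. All of these are dominated by $C\|M\|_{\op}^2\,\dot\eta_s$ with $\dot\eta_s := \|b_s\|_{L^\infty}+\|H_s\|^2_{L^\infty}$. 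Because these densities are deterministic bounds, the expectations factor exactly as in the Brownian case, with $\dd s$ replaced by $\dd\eta_s$.

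The recursion then has three terms:
\begin{equation*}
D_{n+1}(\hat X^\lambda_t) = \int_0^t D_n\,\dd\hat X^{\lambda,\mathrm{mart}} + \int_0^t D_n\,Y M(b)Y^\dagger\,\dd s + \tfrac12\int_0^t D_{n-1}\,\dd[\hat X^\lambda,\hat X^\lambda].
\end{equation*}
Set $a_n(t) := \|D_n(\hat X^\lambda_t)\|_{L^2}$. The Itô isometry \eqref{eq:matrix-ito-isometry} bounds the first term by $\big(C^2\|M\|_{\op}^2\int_0^t a_n(s)^2\,\dd\eta_s\big)^{1/2}$. Minkowski's integral inequality bounds the second by $C\|M\|_{\op}\int_0^t a_n(s)\,\dd\eta_s$ and the third by $\tfrac12 C\|M\|_{\op}^2\int_0^t a_{n-1}(s)\,\dd\eta_s$.

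Take as induction hypothesis $a_m(s)\le 4^m c^m \eta_s^{m/2}(1+\eta_s)^{m/2}/\sqrt{m!}$ with $c = C\|M\|_{\op}^2$, for all $m\le n$. The first term gives $4^n c^{n+1}\eta^{(n+1)/2}(1+\eta)^{n/2}/\sqrt{(n+1)!}$, exactly as in the Brownian computation with $t$ replaced by $\eta_t$.

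The main obstacle is the drift term. It produces $\int_0^t \eta_s^{n/2}\,\dd\eta_s/\sqrt{n!} \sim \eta_t^{(n+2)/2}/\sqrt{n!}$, which carries one extra power of $\eta^{1/2}$ and loses a factor $\sqrt{n+1}$ compared to the target. The factor $(1+\eta_t)^{n/2}$ is designed to absorb this. I would first try to use $\eta^{1/2}\le (1+\eta)^{1/2}$ to absorb the extra power, and handle the $\sqrt{n+1}$ loss via
\begin{equation*}
\int_0^t \eta_s^{n/2}(1+\eta_s)^{n/2}\,\dd\eta_s \le \frac{\eta_t^{n/2+1}(1+\eta_t)^{n/2}}{n/2+1},
\end{equation*}
together with $\frac{\sqrt{n+1}}{n/2+1}\le 1$. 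This turns the drift contribution into at most $4^n c^{n+1}\eta^{(n+1)/2}(1+\eta)^{(n+1)/2}/\sqrt{(n+1)!}$.

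If the integral-form bound does not close, the fallback is to run the induction separately on a rescaled quantity. The correction term is handled as in \eqref{eq:brownian-correction-term-bound} and is of lower order. The three contributions together are at most $(1+1+\tfrac12)\,4^n c^{n+1}(\cdots)\le 4^{n+1}c^{n+1}(\cdots)$, which closes the induction and yields \eqref{eq:ito-semimartingale-derivative-bound}.
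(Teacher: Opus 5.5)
Your proposal is correct and follows essentially the same route as the paper: the split $\hat X^\lambda = \hat A^\lambda + \hat N^\lambda$, the three-term recursion (drift by Minkowski, martingale part by the It\^o isometry, bracket correction through $D_{n-1}$ and $\dd[\hat N^\lambda,\hat N^\lambda]$), and the bound $\int_0^t \eta_s^{n/2}(1+\eta_s)^{n/2}\,\dd\eta_s \le \frac{2}{n+2}\eta_t^{n/2+1}(1+\eta_t)^{n/2}$ combined with $\eta_t^{1/2}\le(1+\eta_t)^{1/2}$, which does close the induction, so no fallback is needed. The only step you leave implicit is the base case $n=1$, which the paper gets directly from $\|\hat A^\lambda_t\|_{L^2}\le\|M\|_{\op}\eta_t$ and $\|\hat N^\lambda_t\|_{L^2}\le\sqrt d\,\|M\|_{\op}\sqrt{\eta_t}$; likewise, the correction term in the step from $n=1$ to $n=2$ should be bounded from the explicit form of $\dd[\hat N^\lambda,\hat N^\lambda]$ rather than from an induction hypothesis on $D_0=I_k$, whose norm is $\sqrt k$.
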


\begin{proof}
Again, we can and will assume that $\|M\|_{\op} \ge 1$ and write $X_t = x + A_t + N_t$ with $A_t := \int_0^t b_s \dd s$ of bounded variation and $N_t := \int_0^t H_s \dd W_s$ being a continuous martingale. Since $\frac{1}{n!}Y^{\lambda, (n)}_t = D_n(\hat X^\lambda_t)Y^\lambda_t$ and $Y^\lambda_t$ is orthogonal, it suffices to show that $\|D_n(\hat X^\lambda_t)\|_{L^2}$ satisfies the upper bound \eqref{eq:ito-semimartingale-derivative-bound}. As before, we will apply an induction on $n \ge 1$. Recall that $\hat X^\lambda_t = \int_0^t Y^\lambda_s M(\dd X_s)(Y^\lambda_s)^\dagger$, so in the current case $\hat X^\lambda_{[0, T]}$ is a continuous semimartingale with the canonical decomposition
\begin{equation*}
\begin{gathered}
\hat X^\lambda_t = \hat A^\lambda_t + \hat N^\lambda_t, \\
\begin{aligned}
\hat A^\lambda_t& := \int_0^t Y^\lambda_s M(\dd A_s)(Y^\lambda_s)^\dagger, \\
\hat N^\lambda_t& = \int_0^t Y^\lambda_s M(\dd N_s)(Y^\lambda_s)^\dagger.
\end{aligned}
\end{gathered}
\end{equation*}
For $n = 1$, we have $\|D_1(\hat X^\lambda_t)\|_{L^2} = \|\hat X^\lambda_t\|_{L^2} \le \|\hat A^\lambda_t\|_{L^2} + \|\hat N^\lambda_t\|_{L^2}$. Using Minkowski's inequality and the simple fact that $\|Y^\lambda_sM(b_s)(Y^\lambda_s)^\dagger\|_{L^2} \le \|M\|_{\op}\|b_s\|_{L^\infty}$ for all $s \in [0, t]$, we can easily check that
\begin{align*}
\|\hat A^\lambda_t\|_{L^2} = \bigg\| \int_0^t Y^\lambda_s M(b_s)(Y^\lambda_s)^\dagger \dd s \bigg\|_{L^2} & \le \int_0^t \|Y^\lambda_s M(b_s)(Y^\lambda_s)^\dagger\|_{L^2} \dd s \\
& \le \|M\|_{\op}\eta_t.
\end{align*}
On the other hand, using the It\^o isometry and orthogonality of $Y^\lambda_s$ for the martingale $\hat N^\lambda_t$, we obtain that
\begin{equation*}
\|\hat N^\lambda_t\|_{L^2} \le \|M\|_{\op} \sqrt{\E[ \llbracket N, N \rrbracket_t ]} \le \sqrt{d}\|M\|_{\op}\sqrt{\eta_t}.
\end{equation*}
Combining the above two upper bounds we get
\begin{equation*}
\|D_1(\hat X^\lambda_t)\|_{L^2} \le \|M\|_{\op}\sqrt{\eta_t}(\sqrt{d} + \sqrt{\eta_t}) \le 2C\|M\|_{\op}^2\sqrt{\eta_t}\sqrt{1 + \eta_t},
\end{equation*}
where $C = C(k, d)$ from Lemma \ref{lem:brownian-moving-frame-bracket-bound}, enlarged if necessary so that $C \ge d$. Hence the bound \eqref{eq:ito-semimartingale-derivative-bound} holds for $n = 1$.

Now assume that the bound \eqref{eq:ito-semimartingale-derivative-bound} holds for $D_m(\hat X^\lambda_t)$ for all $1 \le m \le n$ with some $n \ge 1$, and we consider the case $n + 1$. Using the Itô-Stratonovich correction \eqref{eq:matrix-ito-stratonovich-correction}, we have
\begin{equation*}
D_{n + 1}(\hat{X}^{\lambda}_{t})
 =
\int_0^t
D_{n}(\hat{X}^{\lambda}_{s})\dd \hat{X}_s^\lambda
 +
\frac12
\int_0^t
\dd\bigl[ D_{n}(\hat{X}^{\lambda}_{ \cdot }), \hat{X}^\lambda\bigr]_s,
\end{equation*}
and then inserting the canonical decomposition $\hat X^\lambda_t = \hat A^\lambda_t + \hat N^\lambda_t$, we can further derive that
\begin{equation*}
D_{n + 1}(\hat{X}^{\lambda}_{t})
 =
\int_0^t
D_{n}(\hat{X}^{\lambda}_{s})\dd \hat{A}_s^\lambda
 + \int_0^t
D_{n}(\hat{X}^{\lambda}_{s})\dd \hat{N}_s^\lambda +
\frac12
\int_0^t
\dd\bigl[ D_{n}(\hat{X}^{\lambda}_{ \cdot }), \hat{N}^\lambda\bigr]_s
\end{equation*}
and therefore
\begin{align*}
 \|D_{n + 1}(\hat{X}^{\lambda}_{t})\|_{L^2} & \le \underbrace{\bigg\|\int_0^t
D_{n}(\hat{X}^{\lambda}_{s})\dd\hat{A}_s^\lambda\bigg\|_{L^2}}_{ := I}
 + \underbrace{\bigg\|\int_0^t
D_{n}(\hat{X}^{\lambda}_{s})\dd\hat{N}_s^\lambda\bigg\|_{L^2}}_{ := II} \\
&\quad +
\underbrace{\frac12
\bigg\|\int_0^t
d\bigl[ D_{n}(\hat{X}^{\lambda}_{ \cdot }), \hat{N}^\lambda\bigr]_s\bigg\|_{L^2}}_{ := III}.
\end{align*}
For the first term $(I)$, we notice that
\begin{align*}
 \bigg\|\int_0^t
D_{n}(\hat{X}^{\lambda}_{s})\dd\hat{A}_s^\lambda\bigg\|_{L^2} & = \bigg\|\int_0^t
D_{n}(\hat{X}^{\lambda}_{s}) Y^\lambda_sM(b_s)(Y^\lambda_s)^\dagger\dd s\bigg\|_{L^2} \\
& \le \int_0^t \|D_{n}(\hat{X}^{\lambda}_{s}) Y^\lambda_sM(b_s)(Y^\lambda_s)^\dagger \|_{L^2} \dd s\\
& \le \|M\|_{\op} \int_0^t \|D_{n}(\hat{X}^{\lambda}_{s}) \|_{L^2} \|b_s\|_{L^\infty} \dd s\\
& \leq \|M\|_{\op} \int_0^t \|D_{n}(\hat{X}^{\lambda}_{s}) \|_{L^2} \dd \eta_s.
\end{align*}
Consequently, by the induction hypothesis on $\|D_{n}(\hat{X}^{\lambda}_{s}) \|_{L^2}$, we get that (as $(1 + \eta_s)^{\frac{n}{2}} \le (1 + \eta_t)^{\frac{n}{2}}$ for all $s \le t$)
\begin{align*}
 \bigg\|\int_0^t
D_{n}(\hat{X}^{\lambda}_{s})\dd\hat{A}_s^\lambda\bigg\|_{L^2} & \le \|M\|_{\op}\frac{4^n(C\|M\|_{\op}^2)^n}{\sqrt{n!}}\int_0^t \eta_s^{\frac{n}{2}}(1 + \eta_s)^{\frac{n}{2}} \dd \eta_s\\
& \le \|M\|_{\op}\frac{4^n(C\|M\|_{\op}^2)^n}{\sqrt{n!}} \frac{2\eta_t^{\frac{n}{2} + 1}}{n + 2}(1 + \eta_t)^{\frac{n}{2}}\\
& \le 2\frac{4^{n}(C\|M\|_{\op}^2)^{n + 1}\eta_t^{\frac{n + 1}{2}}}{\sqrt{(n + 1)!}} (1 + \eta_t)^{\frac{n}{2}}\sqrt{\eta_t} \\
& \le 2\frac{4^{n}(C\|M\|_{\op}^2)^{n + 1}\eta_t^{\frac{n + 1}{2}}}{\sqrt{(n + 1)!}} (1 + \eta_t)^{\frac{n + 1}{2}}
\end{align*}
For the second term $(II)$, using the It\^o isometry \eqref{eq:matrix-ito-isometry} as in the proof of Theorem \ref{thm:brownian-development-derivative-bound}, we can apply the induction hypothesis to check that
\begin{align*}
 \bigg\|\int_0^t
D_{n}(\hat{X}^{\lambda}_{s})\dd\hat{N}_s^\lambda\bigg\|_{L^2} & \le C\|M\|_{\op}\sqrt{\int_0^t \|D_n(\hat X^\lambda_s) \|_{L^2}^2 \|H_s\|^2_{L^\infty}\dd s} \\
& \le C\|M\|_{\op}\frac{4^n(C\|M\|_{\op}^2)^n}{\sqrt{n!}} \sqrt{\int_0^t \eta_s^n(1 + \eta_s)^n \dd \eta_s}\\
& \le \frac{4^{n}(C\|M\|_{\op}^2)^{n + 1}\eta_t^{\frac{n + 1}{2}}}{\sqrt{(n + 1)!}} (1 + \eta_t)^{\frac{n + 1}{2}}.
\end{align*}
For the last term $(III)$, we have
\begin{align*}
 \frac12
\bigg\|\int_0^t
d\bigl[ D_{n}(\hat{X}^{\lambda}_{ \cdot }), \hat{N}^\lambda\bigr]_s\bigg\|_{L^2} = \frac12\bigg\|\int_0^t D_{n - 1}(\hat X^\lambda_s) \dd [\hat N^\lambda, \hat N^\lambda]_s
\bigg\|_{L^2}
\end{align*}
For $n = 1$, let $f_1, \ldots, f_e$ be the canonical basis of $\R^e$ and set $B_{a, s} = Y^\lambda_sM(H_sf_a)(Y^\lambda_s)^\dagger$. Then
\begin{equation*}
\begin{gathered}
\dd[\hat N^\lambda, \hat N^\lambda]_s = \sum_{a = 1}^e B_{a, s}^2\,\dd s, \\
\bigg\|\sum_{a = 1}^e B_{a, s}^2\bigg\|_{\HS}
 \le \|M\|_{\op}^2\|H_s\|_{\HS}^2
 \le d\|M\|_{\op}^2\|H_s\|_{\op}^2,
\end{gathered}
\end{equation*}
where $\|H_s\|_{\HS}$ is the Hilbert--Schmidt norm of the $d \times e$ matrix $H_s$. Consequently,
\begin{equation*}
III \le \frac d2\|M\|_{\op}^2\eta_t
 \le \frac{C\|M\|_{\op}^2}{\sqrt2}\eta_t(1 + \eta_t),
\end{equation*}
which is the required bound for $n = 1$. For $n \ge 2$, following the same arguments as in the second step of the proof of Theorem \ref{thm:brownian-development-derivative-bound}, we can readily check that
\begin{align*}
 \frac12\bigg\|\int_0^t D_{n - 1}(\hat X^\lambda_s) \dd [\hat N^\lambda, \hat N^\lambda]_s \bigg\|_{L^2} & \le \frac12 C\|M\|_{\op}^2\int_0^t \| D_{n - 1}(\hat X^\lambda_s) \|_{L^2} \dd \eta_s \\
 & \le \frac12 4^{n - 1} \frac{(C\|M\|^2_{\op})^n}{\sqrt{(n - 1)!}} \int_0^t \eta_s^{\frac{n - 1}{2}}(1 + \eta_s)^{\frac{n - 1}{2}} \dd \eta_s \\
 & \le 4^{n - 1} \frac{(C\|M\|^2_{\op})^n}{\sqrt{(n + 1)!}} \eta_t^{\frac{n + 1}{2}}(1 + \eta_t)^{\frac{n + 1}{2}}.
\end{align*}
From all above we finally get that
\begin{align*}
\|D_{n + 1}(\hat{X}^{\lambda}_{t})\|_{L^2} & \le \frac{(C\|M\|^2_{\op})^{n + 1}}{\sqrt{(n + 1)!}} \eta_t^{\frac{n + 1}{2}}(1 + \eta_t)^{\frac{n + 1}{2}} (2 \times 4^n + 4^n + 4^{n - 1})\\
& \le 4^{n + 1}\frac{(C\|M\|^2_{\op})^{n + 1}}{\sqrt{(n + 1)!}} \eta_t^{\frac{n + 1}{2}}(1 + \eta_t)^{\frac{n + 1}{2}},
\end{align*}
which completes the induction proof of \eqref{eq:ito-semimartingale-derivative-bound}.
\end{proof}

The same induction applied to the iterated Stratonovich tensor integrals, first in the Hilbert tensor norm and then using $\|a\|_{ \otimes n} \le d^{n / 2}\|a\|_{\mathrm{Hilbert}}$, gives
\begin{equation*}
\big(\E[\|S(X)^n_{0, T}\|_{ \otimes n}^2]\big)^{1 / 2}
 \le \frac{C_d^n v_T^{n / 2}(1 + v_T)^{n / 2}}{\sqrt{n!}},
\end{equation*}
where $v_T = \llbracket X, X\rrbracket_T$ for the Gaussian martingales in Corollary \ref{cor:gaussian-martingale-derivative-bound}, and $v_T = \eta_T$ for the It\^o semimartingales in Corollary \ref{cor:ito-semimartingale-derivative-bound}. The constant $C_d$ depends only on $d$. Thus their expected signatures $\ESig(X_{[0, T]})$ have infinite radius of convergence. The derivative bounds above apply to $Y^{\lambda, (n)}_t / n!$, and imply that the Taylor power series of $\lambda \mapsto \phi_{X_{[0, T]}}(\lambda M)$ also has infinite radius of convergence. Hence the distributions of their signatures are determined by their expected signatures, either by Proposition \ref{prop:roc-determinacy-criterion} or by the classical result \cite[Corollary 6.6]{ChevyrevLyons2016}.

\subsubsection{The stopped Brownian motion case}\label{sec:stopped-brownian-motion}
In this subsection, we will give a feasible $L^1$-bound for high-order derivatives of unitary developments for stopped Brownian motion within a bounded domain, and then show that the distribution of the signature of stopped Brownian motion is determined by its expected signature. In the contrast to the continuous semimartingales defined on a deterministic time horizon considered in the last subsection, the stopped processes are defined on a random and indefinite time interval, so the proofs for the Brownian motion on deterministic time intervals in Theorem \ref{thm:brownian-development-derivative-bound} need to be adjusted accordingly.

Let $\D \subset \R^d$ be a strongly Lipschitz bounded domain of the class $C^m$ with $m \ge \lfloor \frac{d}{2} \rfloor + 1$, let $X = (X^1_t, \ldots, X^d_t)_{t \in [0, \infty)}$ be a Brownian motion started from some $x \in \D$. Define
\begin{equation*}
\tau_\D = \inf\{t \ge 0 : X_t \notin \D\}
\end{equation*}
be the first exit time of $X$ from $\D$. We use $X^{\tau_\D}$ to denote the $\tau_\D$-stopped Brownian motion, i.e., $X^{\tau_\D}_t = X_{t \wedge \tau_\D}$ for $t \ge 0$. We also remark that if $2 \le d \le 8$ and $\D$ is additionally of class $C^{2, \gamma}$ for some $\gamma \in (0, 1)$, then the expected signature $\ESig(X_{[0, \tau_\D]})$ of stopped Brownian motion $X^{\tau_\D}$ has only finite radius of convergence (see \cite{finiteROC2022}), and consequently one cannot use \cite[Corollary 6.6]{ChevyrevLyons2016}, which asks for an infinite radius of convergence of expected signature, to deduce that the distribution $\mu_{S(X)_{0, \tau_\D}}$ is uniquely determined by $\ESig(X_{[0, \tau_\D]})$.

For $k \ge 1$, $M \in \lin(\R^d, \so(k))$, $\lambda \in \R$ and $n \ge 0$, let $Y^{\lambda}_t, Y^{\lambda, (n)}_t$ and $\hat X^\lambda_t$ be those processes driven by the Brownian motion $X$ defined as before, in particular, recall that (see \eqref{eq:stochastic-matrix-signature-recursion})
\begin{equation*}
Y^{\lambda, (n)}_t = n!\int_{\Delta^n_t} \circ \dd\hat X^\lambda_{s_1} \circ \dd\hat X^\lambda_{s_{2}} \circ \ldots \circ \dd\hat X^\lambda_{s_n} Y^\lambda_t.
\end{equation*}
Let $Y^{\lambda, \tau_\D}_t, Y^{\lambda, \tau_\D, (n)}_t$ and $\hat X^{\lambda, \tau_\D}_t$ be the counterparts of $Y^{\lambda}_t, Y^{\lambda, (n)}_t$ and $\hat X^\lambda_t$ for the $\tau_\D$-stopped Brownian motion $X^{\tau_\D}$, then it is easy to see that for all $t \in [0, \infty)$,
\begin{equation*}
Y^{\lambda, \tau_\D}_t = Y^\lambda_{t \wedge \tau_\D}, \quad Y^{\lambda, \tau_\D, (n)}_t = Y^{\lambda, (n)}_{t \wedge \tau_\D}, \quad \hat X^{\lambda, \tau_\D}_t = \hat X^\lambda_{t \wedge \tau_\D}.
\end{equation*}

In view of \cite{LyonsHao2015}, we know that for any $x \in \D$ and any $\alpha \in (0, \mu_1(\D))$, where $\mu_1(\D)$ denotes the principal Dirichlet eigenvalue of $ - \frac{1}{2}\Delta$ on $\D$, it holds that $K_\alpha(x) := \E_x[e^{\alpha \tau_\D}] < \infty$.

The next theorem can be viewed as a counterpart of Theorem \ref{thm:brownian-development-derivative-bound} for the $\tau_\D$-stopped Brownian motion. Recall that
\begin{equation*}
D_n(\hat{X}^{\lambda}_{t})
 :=
\int_0^t
D_{n - 1}(\hat{X}^{\lambda}_{s}) \circ \dd\hat{X}_s^\lambda ,
\qquad n \ge 1 ,
\end{equation*}
and $D_0(\hat{X}^{\lambda}_{t}) = I$.

\begin{theorem}\label{thm:stopped-brownian-derivative-bound}
For each $k \ge 1$, there exists a constant $\bar C = \bar C(k, d) \ge 1$ such that for every $M \in \lin(\R^d, \so(k))$, $\lambda \in \R$, $n \ge 1$ and $T \ge 0$,
\begin{equation}
\label{eq:brownian-matrix-signature-supremum-bound}
 \bigg\| \sup_{t \in [0, T]}\|D_n(\hat X^\lambda_t)\|_{\HS} \bigg\|_{L^2} \le \frac{2^n (\bar C(||M||_{\op} \vee 1)^2)^n T^{\frac{n}{2}}}{\sqrt{n!}}.
\end{equation}
Moreover, for every $\alpha \in (0, \mu_1(\D))$, we also have for all $n \ge 1$,
\begin{equation}
\label{eq:stopped-brownian-derivative-bound}
 \frac{1}{n!}\|Y^{\lambda, \tau_\D, (n)}_\infty\|_{L^1} = \frac{1}{n!}\|Y^{\lambda, (n)}_{\tau_\D}\|_{L^1} \le e^3K_\alpha(x)^{\frac{1}{2}} \bigg( 2\bar C (\|M\|_{\op} \vee 1)^2 \sqrt{\frac{2}{\alpha}} \bigg)^n
\end{equation}
as long as $\alpha \in (0, \mu_1(\D))$.
\end{theorem}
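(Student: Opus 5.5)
The plan has two parts. First prove \eqref{eq:brownian-matrix-signature-supremum-bound} by re-running the induction of Theorem \ref{thm:brownian-development-derivative-bound}, now with a running supremum inside the $L^2$-norm. Then obtain \eqref{eq:stopped-brownian-derivative-bound} by slicing according to the value of $\tau_\D$ and using the exponential moment $K_\alpha(x)$.

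\emph{Step 1 (supremum bound).} Assume $\|M\|_{\op}\ge 1$. Write, as before,
\begin{equation*}
D_{n+1}(\hat X^\lambda_t)=\int_0^t D_n(\hat X^\lambda_s)\,\dd\hat X^\lambda_s+\tfrac12\int_0^t D_{n-1}(\hat X^\lambda_s)\,\dd[\hat X^\lambda,\hat X^\lambda]_s .
\end{equation*}
\begin{itemize}
\item For the It\^o term, replace the It\^o isometry by Doob's $L^2$ maximal inequality. This costs a factor $2$ and gives
\begin{equation*}
\Big\|\sup_{t\le T}\Big\|\int_0^t D_n\,\dd\hat X^\lambda\Big\|_{\HS}\Big\|_{L^2}\le 2C\|M\|_{\op}\Big(\int_0^T\E\|D_n(\hat X^\lambda_s)\|_{\HS}^2\,\dd s\Big)^{1/2}.
\end{equation*}
Here only the fixed-time bound \eqref{eq:brownian-development-derivative-bound} is needed, and it is already available for every $s$.
\item For the correction term, $\dd[\hat X^\lambda,\hat X^\lambda]_s$ is $\dd s$ times a matrix of norm at most $d\|M\|_{\op}^2$. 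Its running supremum is therefore bounded pathwise by $\int_0^T\|D_{n-1}(\hat X^\lambda_s)\|_{\HS}\,\dd s\cdot d\|M\|_{\op}^2$, and Minkowski's inequality applies.
\end{itemize}
The extra factor $2$ from Doob is absorbed by replacing $C$ with $\bar C=2C$ (or $\bar C = 2C\vee d$). The arithmetic is then identical to the proof of Theorem \ref{thm:brownian-development-derivative-bound}.

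\emph{Step 2 (stopping).} Since $Y^\lambda$ is orthogonal,
\begin{equation*}
\tfrac1{n!}\|Y^{\lambda,(n)}_{\tau_\D}\|_{\HS}=\|D_n(\hat X^\lambda_{\tau_\D})\|_{\HS}.
\end{equation*}
Partition $\Omega$ into the events $\{j-1\le\tau_\D<j\}$, $j\ge1$; $\tau_\D<\infty$ a.s. because $K_\alpha(x)<\infty$. On the $j$-th event, $\|D_n(\hat X^\lambda_{\tau_\D})\|_{\HS}\le\sup_{t\le j}\|D_n(\hat X^\lambda_t)\|_{\HS}$. Cauchy--Schwarz, Step 1 and Markov's inequality $\bbP(\tau_\D\ge j-1)\le K_\alpha(x)e^{-\alpha(j-1)}$ then give
\begin{equation*}
\E\|D_n(\hat X^\lambda_{\tau_\D})\|_{\HS}\le K_\alpha(x)^{1/2}e^{\alpha/2}\,\frac{(2\bar C\|M\|_{\op}^2)^n}{\sqrt{n!}}\sum_{j\ge1}j^{n/2}e^{-\alpha j/2}.
\end{equation*}

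\emph{Step 3 (the series; main obstacle).} The remaining task is to show that
\begin{equation*}
\sum_{j\ge1}j^{n/2}e^{-\alpha j/2}\lesssim \sqrt{n!}\,(2/\alpha)^{n/2},
\end{equation*}
with a prefactor that is uniform in $n$ and $\alpha$ and fits into $e^3$. I would first compare the sum with $\int_0^\infty(t+1)^{n/2}e^{-\alpha t/2}\,\dd t$, using that $t\mapsto t^{n/2}e^{-\alpha t/2}$ is unimodal. This produces $\Gamma(n/2+1)(2/\alpha)^{n/2+1}$ plus the maximum term $(n/\alpha e)^{n/2}$, and then $\Gamma(n/2+1)\le\sqrt{n!}$ handles the Gamma factor. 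The leftover factor $2/\alpha$ and the shift $e^{\alpha/2}$ are where the bookkeeping is delicate, since $\alpha<\mu_1(\D)$ is not bounded below.

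If the direct comparison does not close, I would instead use the pointwise inequality $j^{n/2}\le\Gamma(n/2+1)\beta^{-n/2}e^{\beta j}$ with $\beta$ slightly less than $\alpha/2$, which leaves a geometric series. Another option is to weight the Cauchy--Schwarz step with $e^{\alpha\tau_\D/2}$ directly, splitting the event as $\{\tau_\D\ge j-1\}$ at a scale adapted to $n$. The remaining constants, including $e^{\alpha/2}$ and the geometric-sum factor, would be bounded by $e^3$, possibly after slightly enlarging $\bar C$. This is the step I expect to require the most care.
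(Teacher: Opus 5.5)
Your Step 1 is correct, and slightly more economical than the paper's argument. The Itô term and the bracket term of $D_{n+1}$ involve $D_n$ and $D_{n-1}$ only at fixed times. So Doob's $L^2$ maximal inequality, combined with the fixed-time bounds already proved in Theorem \ref{thm:brownian-development-derivative-bound}, yields \eqref{eq:brownian-matrix-signature-supremum-bound} directly. The paper instead reruns the induction with the supremum inside, using the matrix BDG inequality and $\bar C=C\beta_2$.

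\textbf{The gap is in Steps 2--3.} Slicing $[0,\infty)$ into unit intervals cannot give \eqref{eq:stopped-brownian-derivative-bound} with the $\alpha$-free prefactor $e^3$ and a constant $\bar C=\bar C(k,d)$. Your chain ends at
$$K_\alpha(x)^{1/2}e^{\alpha/2}\,\frac{(2\bar C\|M\|_{\op}^2)^n}{\sqrt{n!}}\sum_{j\ge1}j^{n/2}e^{-\alpha j/2}.$$
\begin{itemize}
\item \emph{Small $\alpha$.} The sum is asymptotic to $\int_0^\infty t^{n/2}e^{-\alpha t/2}\,\dd t=\Gamma(\frac n2+1)(2/\alpha)^{n/2+1}$. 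This carries an extra factor $2/\alpha$. For $n=1$ the resulting bound already exceeds the target once $\alpha\lesssim 0.09$, and absorbing the factor would force $\bar C\gtrsim 1/\alpha$.
\item \emph{Large $\alpha$.} This is admissible whenever $\mu_1(\D)$ is large, and $\bar C$ may not depend on $\D$. The $j=1$ term alone equals $K_\alpha(x)^{1/2}(2\bar C\|M\|_{\op}^2)^n/\sqrt{n!}$, with no decay in $\alpha$, whereas the target decays like $\alpha^{-n/2}$. For $n=1$ the comparison fails as soon as $\alpha>2e^6$.
\item \emph{Your fallback.} Using $j^{n/2}\le\Gamma(\frac n2+1)\beta^{-n/2}e^{\beta j}$ with $\beta<\alpha/2$ leaves the prefactor $e^{\alpha/2}/(e^{\alpha/2-\beta}-1)$. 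This is unbounded as $\alpha\to0$ and as $\alpha\to\infty$ when $\beta/\alpha$ is bounded below. Taking $\beta\ll\alpha$ instead destroys the $(2/\alpha)^{n/2}$ rate.
\end{itemize}
The splitting scale must be adapted to $\alpha$, not to $n$.

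\textbf{The missing idea is to slice at the scale $h=2/\alpha$.} On $\{mh\le\tau_\D<(m+1)h\}$, bound $\|D_n(\hat X^\lambda_{\tau_\D})\|_{\HS}$ by $\sup_{t\le(m+1)h}\|D_n(\hat X^\lambda_t)\|_{\HS}$. Markov's inequality then gives $\bbP_x(\tau_\D\ge mh)^{1/2}\le K_\alpha(x)^{1/2}e^{-m}$, with no $\alpha$ left over. The $T^{n/2}$-homogeneity of \eqref{eq:brownian-matrix-signature-supremum-bound} pulls out exactly $h^{n/2}=(2/\alpha)^{n/2}$. What remains is the $\alpha$-free series $\sum_{m\ge0}e^{-m}(m+1)^{n/2}$. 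Comparing it with $\int_0^\infty e^{1-x}(x+2)^{n/2}\,\dd x$ bounds it by $e^3\Gamma(\frac n2+1)$, and log-convexity of $\Gamma$ gives $\Gamma(\frac n2+1)\le\sqrt{n!}$. The $\sqrt{n!}$ cancels, and this is exactly where the constant $e^3$ in the statement comes from.
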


\begin{proof}
Again, we may without loss of generality assume that $\|M\|_{\op} \ge 1$.
\begin{enumerate}
\item The proof of the bound \eqref{eq:brownian-matrix-signature-supremum-bound} is analogous to the proof of Theorem \ref{thm:brownian-development-derivative-bound} and Corollary \ref{cor:ito-semimartingale-derivative-bound}. We still apply an induction argument for $n \ge 1$.  The case $n = 1$ is clear: by the Burkholder-Davis-Gundy inequality (see Theorem \ref{thm:matrix-bdg-inequality}), we obtain immediately that

\begin{equation*}
 \bigg\| \sup_{t \in [0, T]}\|\hat X^\lambda_t\|_{\HS} \bigg\|_{L^2} = \E\bigg[ \sup_{t \in [0, T]}\|\hat X^\lambda_t\|_{\HS}^2\bigg]^{\frac{1}{2}} \le \sqrt{\beta_2}\E[\llbracket \hat X^\lambda, \hat X^\lambda \rrbracket_T]^{\frac{1}{2}}.
\end{equation*}
In view of \eqref{eq:moving-frame-second-moment} we know that $\llbracket \hat X^\lambda, \hat X^\lambda \rrbracket_t = \llbracket M(X), M(X) \rrbracket_t$, and therefore by Lemma \ref{lem:linear-map-quadratic-variation-bound}, we get that
\begin{equation*}
 \bigg\| \sup_{t \in [0, T]}\|\hat X^\lambda_t\|_{\HS} \bigg\|_{L^2} \le \sqrt{\beta_2d}\|M\|_{\op}T^{\frac{1}{2}},
\end{equation*}
which means that \eqref{eq:brownian-matrix-signature-supremum-bound} is satisfied for $n = 1$ if we take $\bar C = \bar C(k, d) = C(k, d)\beta_2$, where $C(k, d)$ is the same constant as we used in Theorem \ref{thm:brownian-development-derivative-bound}, enlarged if necessary so that $C(k, d) \ge \max\{1, d\}$, and we take $\beta_2 \ge 1$.

Now assume that the bound \eqref{eq:brownian-matrix-signature-supremum-bound} holds for $D_m(\hat X^\lambda)$ for all $1 \le m \le n$ with some $n \ge 1$, and we consider the case $n + 1$. As in the proof of Theorem \ref{thm:brownian-development-derivative-bound}, we have
\begin{equation*}
D_{n + 1}(\hat{X}^{\lambda}_{t})
 =
\int_0^t
D_{n}(\hat{X}^{\lambda}_{s})\dd\hat{X}_s^\lambda
 +
\frac12
\int_0^t
\dd\bigl[ D_{n}(\hat{X}^{\lambda}_{ \cdot }), \hat{X}^\lambda\bigr]_s,
\end{equation*}
and thus
\begin{align}
\label{eq:matrix-signature-supremum-decomposition}
 \bigg\| \sup_{t \in [0, T]}\|D_{n + 1}(\hat X^\lambda_t)\|_{\HS} \bigg\|_{L^2} & \le
 \bigg\| \sup_{t \in [0, T]}\bigg\|\int_0^t
D_{n}(\hat{X}^{\lambda}_{s})\dd\hat{X}_s^\lambda\bigg\|_{\HS} \bigg\|_{L^2} \nonumber \\
& \quad + \frac{1}{2} \bigg\| \sup_{t \in [0, T]}\bigg\| \int_0^t
\dd\bigl[ D_{n}(\hat{X}^{\lambda}_{ \cdot }), \hat{X}^\lambda\bigr]_s \bigg\|_{\HS} \bigg\|_{L^2}.
\end{align}
We apply the Burkholder-Davis-Gundy inequality again to the first It\^o integral and get that
\begin{equation*}
\begin{aligned}
&\bigg\| \sup_{t \in [0, T]}\bigg\|\int_0^t
D_{n}(\hat{X}^{\lambda}_{s})\dd\hat{X}_s^\lambda\bigg\|_{\HS} \bigg\|_{L^2}\\
&\qquad \le \sqrt{\beta_2} \E\bigg[
\bigg\llbracket \int_0^ \cdot
D_{n}(\hat{X}^{\lambda}_{s})\dd\hat{X}_s^\lambda, \,
\int_0^ \cdot D_{n}(\hat{X}^{\lambda}_{s})\dd\hat{X}_s^\lambda \bigg \rrbracket_T
\bigg]^{\frac{1}{2}}.
\end{aligned}
\end{equation*}
From \eqref{eq:matrix-integral-scalar-quadratic-variation} we see that
\begin{equation*}
\begin{aligned}
&\bigg\llbracket \int_0^ \cdot
D_{n}(\hat{X}^{\lambda}_{s})\dd\hat{X}_s^\lambda, \,
\int_0^ \cdot D_{n}(\hat{X}^{\lambda}_{s})\dd\hat{X}_s^\lambda \bigg \rrbracket_T\\
&\qquad = \int_0^T
\text{Tr}\bigg(D_{n}(\hat{X}^{\lambda}_{s})\dd[\hat{X}^\lambda, (\hat{X}^\lambda)^\dagger]_s D_{n}(\hat{X}^{\lambda}_{s})^\dagger\bigg)
\end{aligned}
\end{equation*}
and therefore using the induction hypothesis and Lemma \ref{lem:brownian-moving-frame-bracket-bound} exactly as in the proof of \eqref{eq:brownian-martingale-term-bound}, we can further deduce that
\begin{align}
\label{eq:supremum-martingale-term-bound}
 \bigg\| \sup_{t \in [0, T]}\bigg\|\int_0^t
D_{n}(\hat{X}^{\lambda}_{s})\dd\hat{X}_s^\lambda\bigg\|_{\HS} \bigg\|_{L^2} & \le 2^n(\bar C\|M\|^2_{\op})^{n + 1}\frac{T^{\frac{n + 1}{2}}}{\sqrt{(n + 1)!}},
\end{align}
because $\bar C = C\beta_2$.

For the second term in \eqref{eq:matrix-signature-supremum-decomposition}, when $n \ge 2$ the induction hypothesis applies to $D_{n - 1}$. When $n = 1$, write $A_i(s) = Y_s^\lambda M(e_i)(Y_s^\lambda)^\dagger$, where $e_1, \ldots, e_d$ is the canonical basis of $\R^d$. Since
\begin{equation*}
\begin{gathered}
\dd[\hat X^\lambda, \hat X^\lambda]_s
 = \sum_{i = 1}^d A_i(s)^2\,\dd s, \\
\bigg\|\sum_{i = 1}^d A_i(s)^2\bigg\|_{\HS}
 \le \sum_{i = 1}^d\|M(e_i)\|_{\HS}^2
 \le d\|M\|_{\op}^2,
\end{gathered}
\end{equation*}
the second term is bounded by
\begin{equation*}
\frac12\bigg\|\sup_{t \in [0, T]}
\|[\hat X^\lambda, \hat X^\lambda]_t\|_{\HS}\bigg\|_{L^2}
 \le \frac d2\|M\|_{\op}^2 T
 \le \frac{\bar C\|M\|_{\op}^2 T}{\sqrt{2}}.
\end{equation*}
Thus, also for $n = 1$, and by the induction argument for $n \ge 2$, we obtain
\begin{equation}
\label{eq:supremum-correction-term-bound}
 \frac{1}{2}\bigg\| \sup_{t \in [0, T]}\bigg\| \int_0^t
\dd\bigl[ D_{n}(\hat{X}^{\lambda}_{ \cdot }), \hat{X}^\lambda\bigr]_s \bigg\|_{\HS} \bigg\|_{L^2} \le 2^{n - 1} (\bar C\|M\|_{\op}^2)^n\frac{T^{\frac{n + 1}{2}}}{\sqrt{(n + 1)!}}.
\end{equation}
Now, inserting \eqref{eq:supremum-martingale-term-bound} and \eqref{eq:supremum-correction-term-bound} into the inequality \eqref{eq:matrix-signature-supremum-decomposition}, we finally obtain that
\begin{align*}
 \bigg\| \sup_{t \in [0, T]}\|D_{n + 1}(\hat X^\lambda_t)\|_{\HS} \bigg\|_{L^2} & \le
 (2^n + 2^{n - 1})(\bar C\|M\|^2_{\op})^{n + 1}\frac{T^{\frac{n + 1}{2}}}{\sqrt{(n + 1)!}} \\
 & \le 2^{n + 1} (\bar C\|M\|_{\op}^2)^{n + 1}\frac{T^{\frac{n + 1}{2}}}{\sqrt{(n + 1)!}},
\end{align*}
which completes the induction proof and gives \eqref{eq:brownian-matrix-signature-supremum-bound} for all $n \ge 1$.
\item Now we trun to the proof of \eqref{eq:stopped-brownian-derivative-bound}. Clearly by definition we have
\begin{equation*}
\frac{1}{n!}\|Y^{\lambda, (n)}_{\tau_\D}\|_{\HS} = \|D_n(\hat X^\lambda_{\tau_\D})\|_{\HS}
\end{equation*}
and therefore $\frac{1}{n!}\|Y^{\lambda, (n)}_{\tau_\D}\|_{L^1} = \|D_n(\hat X^\lambda_{\tau_\D})\|_{L^1}$. Now, we pick some $\alpha \in (0, \mu_1(\D))$, and note that by Markov's inequality, for any $u \ge 0$, it holds that for all $x \in \D$,
\begin{equation}
\label{eq:brownian-exit-time-tail}
 \bbP_x(\tau_\D \ge u) \le K_\alpha(x) e^{ - \alpha u}.
\end{equation}
Next, we set $h := \frac{2}{\alpha} > 0$, and decompose $[0, \infty)$ by equidistant intervals with the length $h$. Since the stopped process $D_n(\hat X^{\lambda}_{ \cdot \wedge \tau_\D})$ is constant after time $\tau_\D$, and for almost every $\omega \in \Omega$, there exists a unique $m = m(\omega) \ge 0$ such that $\tau_\D(\omega) \in [mh, (m + 1)h)$ and hence $\|D_n(\hat X^\lambda_{\tau_\D})(\omega)\|_{\HS} \le \sup_{t \in [0, (m + 1)h]} \|D_n(\hat X^\lambda_t)(\omega)\|_{\HS}$, we must have
\begin{align*}
 \E_x[\|D_n(\hat X^\lambda_{\tau_\D})\|_{\HS}] \le \sum_{m = 0}^\infty \E_x\bigg[ 1_{\{\tau_\D \ge mh\}} \sup_{t \in [0, (m + 1)h]} \|D_n(\hat X^\lambda_t)\|_{\HS} \bigg].
\end{align*}
By the Cauchy-Schwarz inequality, for each $m \ge 0$ it holds that
\begin{align*}
 \E_x\bigg[ 1_{\{\tau_\D \ge mh\}} \sup_{t \in [0, (m + 1)h]} \|D_n(\hat X^\lambda_t)\|_{\HS} \bigg] & \le \bbP_x(\tau_\D \ge mh)^{\frac{1}{2}} \\
 & \quad \times \bigg\| \sup_{t \in [0, (m + 1)h]} \|D_n(\hat X^\lambda_t)\|_{\HS}\bigg\|_{L^2}.
\end{align*}
By \eqref{eq:brownian-exit-time-tail} we have $\bbP_x(\tau_\D \ge mh)^{\frac{1}{2}} \le K_\alpha(x)^{\frac{1}{2}}e^{ - \frac{\alpha mh}{2}} = K_\alpha(x)^{\frac{1}{2}}e^{ - m}$ (recall $h = 2 / \alpha$). By \eqref{eq:brownian-matrix-signature-supremum-bound} we have
\begin{equation*}
\bigg \|\sup_{t \in [0, (m + 1)h]} \|D_n(\hat X^\lambda_t)\|_{\HS}\bigg\|_{L^2} \le \frac{2^n (\bar C||M||_{\op}^2)^n ((m + 1)h)^{\frac{n}{2}}}{\sqrt{n!}}.
\end{equation*}
Putting two bounds together, we find that
\begin{equation*}
\E_x[\|D_n(\hat X^\lambda_{\tau_\D})\|_{\HS}] \le K_\alpha(x)^{\frac{1}{2}} \frac{2^n (\bar C||M||_{\op}^2)^n h^{\frac{n}{2}}}{\sqrt{n!}} \sum_{m = 0}^\infty e^{ - m}(m + 1)^{\frac{n}{2}}.
\end{equation*}
Let us give a proper upper bound to $\sum_{m = 0}^\infty e^{ - m}(m + 1)^{\frac{n}{2}}$. It is clear that
\begin{align*}
 \sum_{m = 0}^\infty e^{ - m}(m + 1)^{\frac{n}{2}} & \le \int_0^\infty e^{ - x + 1}(x + 2)^{\frac{n}{2}} \dd x\\
 & = \int_2^\infty y^{\frac{n}{2}} e^{ - y + 3} \dd y\\
 & \le e^3 \int_0^\infty y^{\frac{n}{2}} e^{ - y} \dd y = e^3\Gamma \bigg(\frac{n}{2} + 1\bigg),
\end{align*}
where $\Gamma(z) = \int_0^\infty e^{ - t}t^{z - 1} \dd t$ is the Gamma function. Then, by the fact that $\ln \Gamma(z)$ is convex in $z > 0$, we can easily obtain that
\begin{align*}
 \ln \Gamma \bigg(\frac{n}{2} + 1\bigg) & = \ln \Gamma\bigg(\frac{1}{2}(n + 1) + \frac{1}{2} \cdot 1 \bigg) \\
 & \le \frac{1}{2}\ln \Gamma(n + 1) + \frac{1}{2}\ln \Gamma(1) \\
 & \le \frac{1}{2}\ln (n!)
\end{align*}
as $\ln \Gamma(1) = 0$ and $\Gamma(n + 1) = n!$. As a result, we have $\Gamma (\frac{n}{2} + 1) \le \sqrt{n!}$ and whence $\sum_{m = 0}^\infty e^{ - m}(m + 1)^{\frac{n}{2}} \le e^3 \sqrt{n!}$. Finally, from all above bounds we can arrive at
\begin{align*}
 \E_x[\|D_n(\hat X^\lambda_{\tau_\D})\|_{\HS}] & \le K_\alpha(x)^{\frac{1}{2}} \frac{2^n (\bar C||M||_{\op}^2)^n h^{\frac{n}{2}}}{\sqrt{n!}} \sum_{m = 0}^\infty e^{ - m}(m + 1)^{\frac{n}{2}}\\
 & \le K_\alpha(x)^{\frac{1}{2}} \frac{2^n (\bar C||M||_{\op}^2)^n h^{\frac{n}{2}}}{\sqrt{n!}} \times e^3 \sqrt{n!} \\
 & \le e^3K_\alpha(x)^{\frac{1}{2}} \bigg( 2\bar C \|M\|_{\op}^2 \sqrt{\frac{2}{\alpha}} \bigg)^n,
\end{align*}
which is exactly \eqref{eq:stopped-brownian-derivative-bound}.
\end{enumerate}
\end{proof}

\begin{corollary}\label{cor:stopped-brownian-signature-determinacy}
Let $\D \subset \R^d$ be a strongly Lipschitz bounded domain of the class $C^m$ with $m \ge \lfloor \frac{d}{2} \rfloor + 1$, let $X = (X^1_t, \ldots, X^d_t)_{t \in [0, \infty)}$ be a Brownian motion started from some $x \in \D$.  Define the stopping time $ \tau_\D = \inf\{t \ge 0 : X_t \notin \D\} $. Then, for any $k \ge 1$, $\lambda_0 \in \R$ and $M \in \lin(\R^d, \so(k))$, the Taylor expansion of $\lambda \mapsto \phi_{X_{[0, \tau_\D]}}(\lambda M)$ at $\lambda_0$ has a positive radius of convergence at least $\frac{\sqrt{\alpha}}{2\sqrt{2}\bar C (\|M\|_{\op} \vee 1)^2} > 0$ for any $\alpha \in (0, \mu_1(\D))$, where $\bar C = \bar C(k, d)$ is a constant independent of $\lambda_0$. Consequently, the distribution $\mu_{S(X)_{0, \tau_\D}}$ is determined by the expected signature $\ESig(X_{[0, \tau_\D]}) = (1, \E[S(X)^1_{0, \tau_\D}], \ldots, \E[S(X)^n_{0, \tau_\D}], \ldots)$ within the class of random geometric rough paths satisfying the Conditions (ROC) as in Remark \ref{rem:roc-class-determinacy}.
\end{corollary}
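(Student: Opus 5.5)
The plan is to feed the $L^1$-bound \eqref{eq:stopped-brownian-derivative-bound} of Theorem \ref{thm:stopped-brownian-derivative-bound} into Theorem \ref{thm:analytic-expectation}, and then to verify the two Conditions (ROC) of Proposition \ref{prop:roc-determinacy-criterion}.

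First I fix $k$, a non-zero $M\in\lin(\R^d,\so(k))$ and $\alpha\in(0,\mu_1(\D))$. Since $\tau_\D<\infty$ a.s., the stopped Brownian motion is almost surely a geometric $p$-rough path on $[0,\tau_\D]$. By Theorem \ref{thm:rough-development-derivatives}, for each $\omega$ the map $z\mapsto Y^{z}_{\tau_\D}=\widetilde{zM}(S(X)_{0,\tau_\D})$ is entire. Its $n$-th derivative at a real $\lambda_0$ is $Y^{\lambda_0,(n)}_{\tau_\D}=Y^{\lambda_0,\tau_\D,(n)}_\infty$, and this identification holds because the stopped development coincides with the development stopped at $\tau_\D$.

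Next I set $f(\lambda,\omega):=Y^{\lambda}_{\tau_\D}(\omega)$ and work entrywise. Writing $\rho:=2\bar C(\|M\|_{\op}\vee1)^2\sqrt{2/\alpha}$, the bound \eqref{eq:stopped-brownian-derivative-bound} gives, for every $n\ge1$,
\begin{equation*}
\frac{\E[\|f^{(n)}(\lambda_0,\cdot)\|_{\HS}]}{n!}\le e^3K_\alpha(x)^{1/2}\rho^n .
\end{equation*}
For $n=0$ the corresponding quantity is $\sqrt k$. The constant $\bar C$ does not depend on $\lambda_0$.

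Hence for every $s<1/\rho=\frac{\sqrt{\alpha}}{2\sqrt2\,\bar C(\|M\|_{\op}\vee1)^2}$ the series $\sum_n \E[\|f^{(n)}(\lambda_0,\cdot)\|_{\HS}]s^n/n!$ is dominated by a convergent geometric series. Theorem \ref{thm:analytic-expectation}, applied entrywise with the disk radius $s$, then shows that $\lambda\mapsto\phi_{X_{[0,\tau_\D]}}(\lambda M)=\E[f(\lambda,\cdot)]$ is analytic on $(\lambda_0-s,\lambda_0+s)$. Its Taylor series at $\lambda_0$ is $\sum_n \E[f^{(n)}(\lambda_0)](\lambda-\lambda_0)^n/n!$. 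Letting $s\uparrow 1/\rho$ gives radius of convergence at least $1/\rho$, uniformly in $\lambda_0\in\R$. This gives condition (2) of Proposition \ref{prop:roc-determinacy-criterion}, with $r$ independent of $\bK$.

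For condition (1), I will use the result of \cite{LyonsHao2015}: under the stated regularity of $\D$, the expected signature of stopped Brownian motion exists and has positive radius of convergence, i.e.\ $r_2(X_{[0,\tau_\D]})>0$, and hence $r_1>0$ by Theorem \ref{thm:expected-signature-radii}. Proposition \ref{prop:roc-determinacy-criterion} then gives the determinacy claim within the (ROC) class. The case $M=0$ is trivial.

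The step I expect to be the main obstacle is condition (1). The theorems earlier in this paper do not yield positivity of the radius of convergence of $\ESig$; it must be imported from \cite{LyonsHao2015}, and that import is exactly why the domain hypotheses ($C^m$ with $m\ge\lfloor d/2\rfloor+1$) appear in the statement. If that citation is not accepted, the fallback is to derive $r_1>0$ directly: bound $\E\|S(X)^n_{0,\tau_\D}\|_{\otimes n}$ by the same slicing of $[0,\infty)$ into intervals of length $2/\alpha$, combined with the $L^2$ signature bound of the Brownian remark and Cauchy--Schwarz. This route gives geometric growth in $n$ and so a positive radius.
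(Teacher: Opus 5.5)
Your proposal is correct and follows the paper's proof essentially step for step. The paper also bounds the Taylor coefficients by the geometric majorant coming from \eqref{eq:stopped-brownian-derivative-bound}, applies Theorem \ref{thm:analytic-expectation} entrywise on disks of radius $s<1/\rho$ to obtain condition (2) uniformly in $\lambda_0$, and imports condition (1) from \cite[Theorem 3.6]{LyonsHao2015}. The one weak point is your fallback for condition (1). The slicing argument needs a supremum-in-time $L^2$ bound $\big\|\sup_{t\le (m+1)h}\|S(X)^n_{0,t}\|\big\|_{L^2}$, obtained via BDG as in part (1) of Theorem \ref{thm:stopped-brownian-derivative-bound}. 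The fixed-time bound in the Brownian remark is not enough for this.
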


\begin{proof}
By \eqref{eq:stopped-brownian-derivative-bound} proved in Theorem \ref{thm:stopped-brownian-derivative-bound} we see that for any $\alpha \in (0, \mu_1(\D))$, any $\lambda_0 \in \R$ and any $0 < s < r := \frac{\sqrt{\alpha}}{2\sqrt{2}\bar C (\|M\|_{\op} \vee 1)^2}$, it holds that
\begin{align*}
 \sum_{n = 0}^\infty \frac{1}{n!}\|Y^{\lambda_0, (n)}_{\tau_\D}\|_{L^1} s^n
 & \le \sqrt{k} + e^3K_\alpha(x)^{\frac{1}{2}}
 \sum_{n = 1}^\infty
 \bigg(2\bar C(\|M\|_{\op} \vee 1)^2\sqrt{\frac{2}{\alpha}}\bigg)^n s^n\\
 & < \infty
\end{align*}
thanks to $2\bar C(\|M\|_{\op} \vee 1)^2\sqrt{2 / \alpha}\,s < 1$. Since $\tau_\D < \infty$ almost surely and the development of each stopped sample path is entire in the scale parameter, Theorem \ref{thm:analytic-expectation}, applied entrywise on each disk $D_\C(\lambda_0, s)$ with $0 < s < r$, shows that $\lambda \mapsto \phi_{X_{[0, \tau_\D]}}(\lambda M)$ has an analytic extension to $D_\C(\lambda_0, r)$. In particular, it is analytic in $(\lambda_0 - r, \lambda_0 + r)$ for all $\lambda_0 \in \R$, with $r > 0$ independent of $\lambda_0$. On the other hand, by \cite[Theorem 3.6]{LyonsHao2015} we also know that $\ESig(X_{[0, \tau_\D]})$ has a positive radius of convergence. Now the Conditions (ROC) in Proposition \ref{prop:roc-determinacy-criterion} are satisfied by the stopped Brownian motion $X_{[0, \tau_\D]}$ and therefore it follows that the distribution $\mu_{S(X)_{0, \tau_\D}}$ is determined by the expected signature $\ESig(X_{[0, \tau_\D]})$.
\end{proof}

\begin{remark}
Since the Brownian motion $X$ is a Markovian process with generator $\frac{1}{2}\Delta$, its Stratonovich lift $\bX = \bX^{a, \mathbf x}$, written in logarithmic coordinates as
\begin{equation*}
\bX_t = \bigg(X_t, \frac12\int_0^t
\bigl(X_s \otimes \circ \dd X_s - \circ \dd X_s \otimes X_s\bigr)\bigg)
\end{equation*}
and starting from $\mathbf x = (x, 0) \in \fg^2(\R^d)$, is a $\fg^2(\R^d)$-valued Markovian rough path with a specified function $a$ (see \cite[(16.1)]{Friz2010}), and therefore by Example \ref{ex:markovian-rough-paths} or \cite[Example 6.20]{ChevyrevLyons2016} we can show that the distribution $\mu_{S(\bX^{a, \mathbf x})_{0, T}}$ is determined by the expected signature $\ESig(\bX^{a, \mathbf x}_{[0, T]})$ by using the exponential tail of $\bN(\bX^{a, \mathbf x}_{[0, T]})$ associated with the greedy sequence of Brownian motion, if $T$ is the first exit time of $\bX^{a, \mathbf x}$ of some suitable bounded subset in $\fg^2(\R^d)$. Note that one cannot apply this argument to prove Corollary \ref{cor:stopped-brownian-signature-determinacy} for the stopped Brownian motion $X^{\tau_\D}$, because our stopping time $\tau_\D$ is the first exit time of the Brownian motion $X$ from $\D \subset \R^d$ instead of the first exit time of the whole rough path lift $\bX$ from a bounded subset in $\fg^2(\R^d) = \R^d \oplus \so(d)$. Moreover, our proof is not based on an application of the greedy sequence of Brownian motion as in Subsection \ref{sec:greedy-sequence-bounds}.
\end{remark}

\appendix

\section{Stochastic Calculus of Matrix-valued Continuous Semimartingales} \label{sec:matrix-stochastic-calculus}

Unless otherwise stated, all processes in this section are $\R^{k \times k}$-valued continuous semimartingales in the sense that each $(i, j)$-entry of them are $\R$-valued continuous semimartingale for all $i, j = 1, \ldots, k$.

\paragraph{Stochastic Integration and Quadratic Variation of Matrix-valued Semimartingales.}

For two continuous semimartingales $A = (A_t)_{t \in [0, T]}$ and $B = (B_t)_{t \in [0, T]}$, their Stratonovich integral $\int_0^t A_s \circ \dd B_s$ is another $k \times k$-matrix valued continuous semimartingale whose $(i, j)$-entry is given by
\begin{equation*}
\bigg(\int_0^t A_s \circ \dd B_s \bigg)_{ij} = \sum_{\ell = 1}^k \int_0^t A^{i\ell}_s \circ \dd B^{\ell j}_s
\end{equation*}
for all $i, j = 1, \ldots, k$. The It\^o integral $\int_0^t A_s \dd B_s$ will be interpreted in the same way by replacing those $\R$-valued Stratonovich integrals $\int_0^t A^{i\ell}_s \circ \dd B^{\ell j}_s$ through It\^o integrals $\int_0^t A^{i\ell}_s \dd B^{\ell j}_s$. Similarly, the Stratonovich integral $\int_0^t \circ \dd A_s B_s$ means the above matrix-valued integral of $B$ from the right-hand side against $A$, that is, the $(i, j)$-entry of $\int_0^t \circ \dd A_s B_s$ is given by
\begin{equation*}
\bigg(\int_0^t \circ \dd A_s B_s \bigg)_{ij} = \sum_{\ell = 1}^k \int_0^t B^{\ell j}_s \circ dA^{i \ell}_s.
\end{equation*}
The same calculation also holds for the It\^o integral $\int_0^t (\dd A_s) B_s$. It is clear that using the above definition we have the following product rule for matrix-valued continuous semimartingales:
\begin{equation*}
 A_tB_t = A_0B_0 + \int_0^t A_s \circ \dd B_s + \int_0^t \circ \dd A_s B_s.
\end{equation*}
Furthermore, suppose that $C = (C_t)_{t \in [0, T]}$ is another continuous semimaringale, then the notation $\int_0^t A_s \circ \dd(\int \circ \dd B_s C_s)$ means the Stratonovich integral of $A$ against the integral process $\int_0^ \cdot \circ \dd B_s C_s$. Thanks to the associativity law of $\R$-valued Stratonovich integral, it follows that
\begin{equation*}
 \bigg(\int_0^t A_s \circ \dd\bigg(\int \circ \dd B_s C_s\bigg) \bigg)_{ij} = \sum_{\ell = 1}^k \sum_{m = 1}^k \int_0^t A^{i \ell}_s C^{m j}_s \circ \dd B^{\ell m}_s.
\end{equation*}
Similarly, the notation $\int_0^t \circ \dd(\int A_s \circ d B_s) C_s$ means the Stratonovich integral of $C$ against the integral process $\int_0^ \cdot A_s \circ \dd B_s$ from right to left. Using the associativity law of $\R$-valued Stratonovich integral again, we can easily check that $\int_0^t \circ \dd(\int A_s \circ \dd B_s) C_s = \int_0^t A_s \circ \dd(\int \circ \dd B_s C_s)$ holds true.

we will use $[A, B]_ \cdot $ to denote the $k \times k$-matrix-valued continuous process of bounded variation (with respect to the Hilbert-Schmidt norm), whose $(i, j)$-entry $[A, B]_ \cdot ^{ij}$ is the sum $\sum_{\ell = 1}^k[A^{i\ell}, B^{\ell j}]_ \cdot $ of scalar quadratic covariations. More precisely speaking,
\begin{equation*}
[A, B]_t = ([A, B]_t^{ij})_{i, j = 1, \ldots, k} = \bigg(\sum_{\ell = 1}^k [A^{i\ell}, B^{\ell j}]_t\bigg)_{i, j = 1, \ldots, k},
\end{equation*}
where $[A^{i\ell}, B^{\ell j}]$ is the usual quadratic covariation process between $\R$-valued semimartingales $A^{i\ell}$ and $ B^{\ell j}$. Therefore we will call $[A, B]_ \cdot $ the quadratic covariation of $A$ and $B$. Note that the usual It\^o-Stratonovich correction still holds for matrix-valued continuous semimartingales:
\begin{equation}
\label{eq:matrix-ito-stratonovich-correction}
 \int_0^t A_s \circ \dd B_s = \int_0^t A_s \dd B_s + \frac{1}{2}[A, B]_t, \quad \int_0^t \circ \dd B_s A_s = \int_0^t (dB_s) A_s + \frac{1}{2}[B, A]_t
\end{equation}
Moreover, the usual computation rule for the quadratic variation of a stochastic integral with another continuous semimartingale remains true in the matrix case: if $A, B, $ and $C$ are $k \times k$-matrix valued continuous semimartingales, then
\begin{equation}
\label{eq:matrix-integral-bracket-rule}
 \bigg[\int_0^ \cdot A_s \circ \dd B_s, C\bigg]_t = \bigg[\int_0^ \cdot A_s \dd B_s, C\bigg]_t = \int_0^t A_s \dd[B, C]_s.
\end{equation}
However, in general we do not have $[A, B] = [B, A]$.

On the other hand, given a continuous matrix-valued local martingale $A$, we use
\begin{equation*}
\llbracket A, A \rrbracket_t := \sum_{i, j = 1}^k [A^{ij}, A^{ij}]_t
\end{equation*}
to denote the predictable increasing process in the local Doob--Meyer decomposition of the $\R$-valued local submartingale $(\|A_t - A_0\|^2_{\HS})_{t \in [0, T]}$, i.e., $\llbracket A, A \rrbracket_ \cdot $ is the unique continuous bounded variation process (starting from $0$) such that $(\|A_t - A_0\|^2_{\HS} - \llbracket A, A \rrbracket_t)_{t \in [0, T]}$ is a continuous local martingale.

\paragraph{It\^o Isometry for the Matrix-valued Stochastic Integrals.}

Now we consider a continuous square integrable matrix- valued local martingale $B$ and a progressively measurable matrix-valued process $A$ which is integrable with respect to $B$ in the sense that for all $i, j, \ell = 1, \ldots, k$, $A^{i\ell}$ is $B^{\ell j}$-integrable, and suppose that $\E[\int_0^T \text{Tr}(A_s\dd[B, B^\dagger]_sA_s^\dagger)] < \infty$. Recall that the It\^o integral $\int_0^t A_s \dd B_s$ is a matrix-valued continuous local martingale such that
\begin{equation*}
\bigg(\int_0^t A_s dB_s\bigg)_{ij} = \sum_{\ell = 1}^k \int_0^t A^{i\ell}_s dB^{\ell j}_s.
\end{equation*}
It is easy to see that for all $t \in [0, T]$,
\begin{align*}
 \bigg\|\int_0^t A_s \dd B_s\bigg\|_{\HS}^2 & = \sum_{i, j = 1}^k \bigg(\int_0^t A_s \dd B_s\bigg)_{ij}^2 \\
 & = \sum_{i, j = 1}^k \sum_{\ell, \ell^\prime = 1}^k \int_0^t A^{i\ell}_s \dd B^{\ell j}_s\int_0^t A^{i\ell^\prime}_s \dd B^{\ell^\prime j}_s.
\end{align*}
As a consequence of the classical It\^o isometry, we can immediately obtain that
\begin{align*}
 \E\bigg[\bigg\|\int_0^t A_s \dd B_s\bigg\|_{\HS}^2\bigg] & = \E\bigg[\sum_{i, j = 1}^k \sum_{\ell, \ell^\prime = 1}^k \int_0^t A^{i\ell}_s \dd B^{\ell j}_s\int_0^t A^{i\ell^\prime}_s \dd B^{\ell^\prime j}_s
 \bigg]\\
 & = \E\bigg[\sum_{i, j = 1}^k \sum_{\ell, \ell^\prime = 1}^k \int_0^t A^{i\ell}_s A^{i\ell^\prime}_s \dd[B^{\ell j}, B^{\ell^\prime j}]_s
 \bigg]\\
 & = \E\bigg[ \int_0^t \text{Tr}(A_s \dd[B, B^\dagger]_s A^\dagger_s) \bigg].
\end{align*}
Therefore, we may view the following identity
\begin{equation}
\label{eq:matrix-ito-isometry}
 \E\bigg[\bigg\|\int_0^t A_s \dd B_s\bigg\|_{\HS}^2\bigg] = \E\bigg[ \int_0^t \text{Tr}(A_s \dd[B, B^\dagger]_s A^\dagger_s) \bigg]
\end{equation}
as the It\^o isometry for matrix-valued stochastic integral. Also note that a direct calculation of scalar quadratic covariations implies that the compensator $\llbracket \int_0^ \cdot A_s \dd B_s, \int_0^ \cdot A_s \dd B_s \rrbracket_t, t \in [0, T]$ satisfies that

\begin{align}
\label{eq:matrix-integral-scalar-quadratic-variation}
 \bigg\llbracket \int_0^ \cdot A_s \dd B_s, \int_0^ \cdot A_s \dd B_s \bigg\rrbracket_t & = \sum_{i, j = 1}^k \bigg[ \bigg(\int_0^ \cdot A_s \dd B_s\bigg)^{ij}, \bigg(\int_0^ \cdot A_s \dd B_s\bigg)^{ij}\bigg]_t \nonumber\\
 & = \int_0^t \text{Tr}(A_s \dd[B, B^\dagger]_s A^\dagger_s).
\end{align}
 \begin{example}
If $B_t, t \in [0, T]$ is an $\R^{k \times k}$-valued Brownian motion in the sense that all of its components are continuous martingales and for all $i, j, p, q = 1, \ldots, k$, $[B^{ij}, B^{pq}]_t = \delta_{ip}\delta_{jq}t$ for all $t \in [0, T]$, then the matrix-valued quadratic variation $\dd[B, B^\dagger]_t = kI_k dt$ and in this case the It\^o isometry \eqref{eq:matrix-ito-isometry} becomes
\begin{equation*}
 \E\bigg[\bigg\|\int_0^t A_s \dd B_s\bigg\|_{\HS}^2\bigg] = k \E\bigg[\int_0^t \|A_s\|_{\HS}^2 \dd s \bigg],
\end{equation*}
which coincides with the classical It\^o isometry for scalar Brownian motion when $k = 1$.
\end{example}

\paragraph{Matrix valued semimartingale as linearly transformed $\R^d$-valued semimartingales. }
Let $X$ be a continuous $\R^d$-valued semimartingale defined on some filtered probability space $(\Omega, \bbF, \bbP)$ and $M \in \lin(\R^d, \so(k))$ be a given linear operator from $\R^d$ to $\so(k)$ for some $k \ge 1$. Then clearly the process $(M(X_t))_{t \in [0, T]}$ is a $\so(k)$-valued continuous semimartingale. Thanks to the linearity of $M$, we indeed have $\int_0^ \cdot A_t \circ \dd (M(X)_t) = \int_0^ \cdot A_t M( \circ \dd X_t) = \sum_{i = 1}^d \int_0^ \cdot A_t M(e_i) \circ \dd X^i_t$.

For a given linear operator $M \in \lin(\R^d, \so(k))$, we use $\|M\|_{\op}$ to denote the operator norm of $M$ relative to the Hilbert-Schmidt norm on $\so(k)$ and the usual Euclidean norm on $\R^d$, that is,
\begin{equation*}
\|M\|_{\op} = \sup_{x \in \R^d; \|x\|_2 \le 1} \|M(x)\|_{\HS} = \sup_{x \in \R^d; \|x\|_2 \le 1} \bigg(
\sum_{i, j = 1}^k |M_{ij}(x)|^2 \bigg)^{\frac{1}{2}}.
\end{equation*}
\begin{lemma}\label{lem:linear-map-quadratic-variation-bound}
Assume that $X$ is an $\R^d$-valued continuous local martingale and $M \in \lin(\R^d, \so(k))$. Then we have for all $t \in [0, T]$,

\begin{equation*}
 \llbracket M(X), M(X) \rrbracket_t = \sum_{i, j = 1}^k [M_{ij}(X), M_{ij}(X)]_t \le \|M\|_{\op}^2 \llbracket X, X \rrbracket_t,
\end{equation*}
where $\llbracket X, X \rrbracket_t = \sum_{i = 1}^d [X^i, X^i]_t$.
\end{lemma}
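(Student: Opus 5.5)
The identity $\llbracket M(X), M(X) \rrbracket_t = \sum_{i,j} [M_{ij}(X), M_{ij}(X)]_t$ is just the definition of $\llbracket \cdot, \cdot \rrbracket$ from the appendix, applied to the matrix-valued local martingale $M(X)$. So the plan reduces to proving the inequality. The approach is to expand each scalar bracket through linearity of $M$ and then compare the resulting quadratic forms.

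First, write $M_{ij}(x) = \sum_{a=1}^d m^{ij}_a x^a$, where $m^{ij}_a := M_{ij}(e_a)$. Then $M_{ij}(X) = \sum_a m^{ij}_a X^a$, and bilinearity of the bracket gives
\begin{equation*}
\sum_{i,j=1}^k [M_{ij}(X), M_{ij}(X)]_t = \sum_{a,b=1}^d G_{ab}\,[X^a, X^b]_t, \qquad G_{ab} := \sum_{i,j=1}^k m^{ij}_a m^{ij}_b = \langle M(e_a), M(e_b)\rangle_{\HS}.
\end{equation*}
The matrix $G = (G_{ab})$ is the Gram matrix of $M$ with respect to the Hilbert--Schmidt inner product. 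For $x \in \R^d$ we have $x^\top G x = \|M(x)\|_{\HS}^2 \le \|M\|_{\op}^2 |x|^2$, so $\|M\|_{\op}^2 I_d - G$ is positive semidefinite.

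Next, I would pass to increments. The matrix of brackets $Q_t := ([X^a, X^b]_t)_{a,b}$ has positive semidefinite increments $Q_t - Q_s$ for $s \le t$, by the Kunita--Watanabe inequality, or simply because $y^\top (Q_t - Q_s) y = [y \cdot X]_t - [y \cdot X]_s \ge 0$. Taking $s = 0$, $Q_t$ is itself positive semidefinite. The trace of a product of two positive semidefinite matrices is nonnegative, so
\begin{equation*}
\operatorname{Tr}\big((\|M\|_{\op}^2 I_d - G) Q_t\big) \ge 0 .
\end{equation*}
Rearranged, this reads $\sum_{a,b} G_{ab}[X^a,X^b]_t \le \|M\|_{\op}^2 \operatorname{Tr} Q_t = \|M\|_{\op}^2 \llbracket X, X\rrbracket_t$, which is the claim. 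Alternatively, I could diagonalise $Q_t = \sum_\ell \mu_\ell v_\ell v_\ell^\top$ with $\mu_\ell \ge 0$ and apply the bound $v_\ell^\top G v_\ell \le \|M\|_{\op}^2$ to each term.

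No step is really an obstacle. The only point that needs care is justifying that $Q_t$ is positive semidefinite pathwise, which requires choosing versions of the brackets that hold simultaneously for all $y$. I would handle this by proving nonnegativity for $y$ in a countable dense set and then extending by continuity in $y$, since $y \mapsto y^\top Q_t y$ is a quadratic form.
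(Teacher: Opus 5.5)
Your proof is correct, but it takes a different route from the paper's. The paper works from the Riemann-sum characterisation of quadratic variation. Along deterministic partitions of $[0,t]$ with vanishing mesh, the bound $\sum_{i,j}\sum |M_{ij}(X_u)-M_{ij}(X_s)|^2=\sum\|M(X_u-X_s)\|_{\HS}^2\le\|M\|_{\op}^2\sum|X_u-X_s|^2$ holds term by term, and the inequality survives the limit in probability. That argument needs nothing beyond the definition of $\|M\|_{\op}$, but it relies on the convergence-in-probability description of brackets.

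Your argument uses only bilinearity and positivity of brackets, and avoids any limiting procedure. It also gives the exact identity $\llbracket M(X),M(X)\rrbracket_t=\operatorname{Tr}(GQ_t)$ with the deterministic Gram matrix $G$. This can be sharper than the lemma: for Brownian motion it gives $t\sum_a\|M(e_a)\|_{\HS}^2$ instead of the bound $d\|M\|_{\op}^2t$.

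The one technical point you flag, pathwise positive semidefiniteness of $Q_t$ in all directions simultaneously, can be avoided. Diagonalise the deterministic matrix $G=\sum_\ell g_\ell u_\ell u_\ell^\top$ instead of $Q_t$, with $(u_\ell)$ an orthonormal basis of $\R^d$ and $0\le g_\ell=\|M(u_\ell)\|_{\HS}^2\le\|M\|_{\op}^2$. Then
$\operatorname{Tr}(GQ_t)=\sum_\ell g_\ell\,[u_\ell\cdot X,u_\ell\cdot X]_t\le\|M\|_{\op}^2\sum_\ell[u_\ell\cdot X,u_\ell\cdot X]_t=\|M\|_{\op}^2\operatorname{Tr}(Q_t)$.
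This uses nonnegativity of only $d$ fixed brackets, so no countable-dense-set argument is needed.
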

\begin{proof}
By the definition of the quadratic variation, for any sequence of deterministic partitions $\cP_n, n \ge 1$ of $[0, t]$ whose mesh tends to zero, it holds that for any $i, j = 1, \ldots, k$,
\begin{equation*}
[M_{ij}(X), M_{ij}(X)]_t = \lim_{n \to \infty} \sum_{[s, u] \in \cP_n} |M_{ij}(X_u) - M_{ij}(X_s)|^2,
\end{equation*}
where the limit is taken with respect to the convergence in probability. Note that for every $n$, we have
\begin{align*}
 \sum_{i, j = 1}^k \sum_{[s, u] \in \cP_n} |M_{ij}(X_u) - M_{ij}(X_s)|^2 & \le \|M\|_{\op}^2 \sum_{[s, u] \in \cP_n} \|X_u - X_s\|_2^2 \\
 & = \|M\|_{\op}^2 \sum_{[s, u] \in \cP_n} \sum_{i = 1}^d|X^i_u - X^i_s|^2.
\end{align*}
Hence, letting $n \to \infty$ and taking the limit in the convergence of probability, we get that
\begin{equation*}
\sum_{i, j = 1}^k [M_{ij}(X), M_{ij}(X)]_t \le \|M\|_{\op}^2 \sum_{i = 1}^d [X^i, X^i]_t,
\end{equation*}
as claimed.
\end{proof}

\paragraph{Burkholder-Davis-Gundy Inequality for Matrix-valued Continuous Martingales.}

The following type of Burkholder-Davis-Gundy inequality is well known (see e.g. \cite{MarinelliRockner2016}):
\begin{theorem}\label{thm:matrix-bdg-inequality}
Let $N = (N_t)_{t \in [0, T]}$ be an $\R^{k \times k}$-valued continuous local martingale defined on $(\Omega, \mathbb F = (\cF_t)_{t \in [0, T]}, \bbP)$ with $N_0 = 0$. Then for any $p \in [1, \infty)$, there exists a constant $\beta_p \ge 1$ which only depends on $p$ (but not on $k$ and $N$) such that for any $\bbF$-stopping time $\tau \le T$

\begin{equation*}
 \beta_p^{ - 1} \E\bigg[ \llbracket N, N \rrbracket^{p / 2}_\tau\bigg] \le \E\bigg[\sup_{t \in [0, \tau]}\|N_t\|_{\HS}^p\bigg] \le \beta_p \E\bigg[ \llbracket N, N \rrbracket^{p / 2}_\tau\bigg],
\end{equation*}
where $\llbracket N, N \rrbracket_t = \sum_{i, j = 1}^k [N^{ij}, N^{ij}]_t, t \in [0, T]$ is the predictable increasing process in the local Doob--Meyer decomposition of the real valued local submartingale $\|N_t\|_{\HS}^2, t \in [0, T]$.
\end{theorem}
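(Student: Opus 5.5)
The plan is to treat $N$ as a continuous local martingale with values in the Euclidean space $\R^{k^2}\cong\R^{k\times k}$. Under this identification the Euclidean norm is exactly $\|\cdot\|_{\HS}$. The key point is to use only manipulations of the scalar processes $\|N_t\|_{\HS}^2$ and $\llbracket N,N\rrbracket_t$, so that no constant depending on $k$ ever appears.

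First, by localization (replace $\tau$ by $\tau\wedge\sigma_m$, where $\sigma_m$ is the first time $\|N\|_{\HS}+\llbracket N,N\rrbracket$ exceeds $m$, and let $m\to\infty$ by monotone convergence at the end), we may assume $N$ is bounded and $\llbracket N,N\rrbracket$ is bounded. Itô's formula applied entrywise gives
\begin{equation*}
\|N_t\|_{\HS}^2=2\int_0^t \langle N_s,\dd N_s\rangle_{\HS}+\llbracket N,N\rrbracket_t ,
\end{equation*}
where $\langle N_s,\dd N_s\rangle_{\HS}=\sum_{i,j}N^{ij}_s\,\dd N^{ij}_s$. Hence for every bounded stopping time $\rho$ we have $\E\|N_\rho\|_{\HS}^2=\E\llbracket N,N\rrbracket_\rho$. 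The scalar martingale $L:=\int\langle N,\dd N\rangle_{\HS}$ satisfies, by Cauchy--Schwarz in $\R^{k^2}$,
\begin{equation*}
\dd[L,L]_s\le\|N_s\|_{\HS}^2\,\dd\llbracket N,N\rrbracket_s ,
\end{equation*}
and this bound is dimension-free.

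\emph{Case $p\in[1,2)$.} Apply Lenglart's domination inequality twice. It states that if a nonnegative adapted continuous $X$ satisfies $\E X_\rho\le\E A_\rho$ for all bounded stopping times $\rho$, with $A$ continuous and increasing, then for $q\in(0,1)$ one has $\E[\sup_{t\le\tau}X_t^q]\le\frac{2-q}{1-q}\E[A_\tau^q]$. Taking $X=\|N\|_{\HS}^2$, $A=\llbracket N,N\rrbracket$ and $q=p/2$ gives the upper bound. For the lower bound, take $X=\llbracket N,N\rrbracket$ and $A_t=\sup_{s\le t}\|N_s\|_{\HS}^2$; the domination holds by the identity above. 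Both constants depend only on $p$. The endpoint $q=1/2$ (i.e.\ $p=1$) is fine since $q<1$.

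\emph{Case $p\ge2$.} For the upper bound, apply Itô's formula to $f(N)=\|N\|_{\HS}^p$. The second-order term is
\begin{equation*}
\tfrac p2\|N\|^{p-2}_{\HS}\,\dd\llbracket N,N\rrbracket+\tfrac{p(p-2)}{2}\|N\|_{\HS}^{p-4}\,\dd[L,L],
\end{equation*}
which by the Cauchy--Schwarz bound is at most $c_p\|N\|_{\HS}^{p-2}\,\dd\llbracket N,N\rrbracket$. Taking expectations, then using Doob's $L^p$ inequality for the submartingale $\|N\|_{\HS}$ and Hölder's inequality with exponents $p/(p-2)$ and $p/2$, we get
\begin{equation*}
\E\Big[\sup_{t\le\tau}\|N_t\|_{\HS}^p\Big]\le c_p\,\E\Big[\sup_{t\le\tau}\|N_t\|^p_{\HS}\Big]^{1-2/p}\,\E\big[\llbracket N,N\rrbracket_\tau^{p/2}\big]^{2/p}.
\end{equation*}
Dividing through, which is legitimate since everything is finite after localization, yields the bound. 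For the lower bound, use $\llbracket N,N\rrbracket_\tau=\|N_\tau\|_{\HS}^2-2L_\tau$ and apply the scalar BDG inequality to $L$:
\begin{equation*}
\E|L_\tau|^{p/2}\le C_p\,\E[L,L]_\tau^{p/4}\le C_p\,\E\Big[\sup_{t\le\tau}\|N_t\|_{\HS}^{p/2}\,\llbracket N,N\rrbracket_\tau^{p/4}\Big].
\end{equation*}
Then Cauchy--Schwarz together with Young's inequality $ab\le\varepsilon a^2+\varepsilon^{-1}b^2$ lets us absorb $\varepsilon\,\E\llbracket N,N\rrbracket_\tau^{p/2}$ into the left-hand side.

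The main obstacle is guaranteeing dimension-freeness, i.e.\ that $\beta_p$ does not depend on $k$. This is why I avoid applying scalar BDG entrywise, which would cost powers of $k^2$. All steps instead go through $\|N\|_{\HS}$, $\llbracket N,N\rrbracket$ and the single scalar martingale $L$, together with the inequality $\dd[L,L]\le\|N\|_{\HS}^2\,\dd\llbracket N,N\rrbracket$. Finally, I would set $\beta_p$ to be the maximum of the constants obtained, and of $1$.
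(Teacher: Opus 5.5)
Your proof is correct, but it does not follow the paper, because the paper gives no argument for this statement. It quotes the result as the known Burkholder--Davis--Gundy inequality for Hilbert-space-valued continuous local martingales (Marinelli--R\"ockner), applied to $(\R^{k\times k},\|\cdot\|_{\HS})\cong\R^{k^2}$. Independence of $k$ is then inherited from the fact that the Hilbert-space constants do not depend on the space.

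You instead prove that inequality directly in this setting, reducing everything to scalar objects:
\begin{itemize}
\item the identity $\|N\|_{\HS}^2=2L+\llbracket N,N\rrbracket$;
\item the dimension-free bound $\dd[L,L]\le\|N\|_{\HS}^2\,\dd\llbracket N,N\rrbracket$, which is positive semidefiniteness of $(\dd[N^a,N^b])_{a,b}$, or equivalently Kunita--Watanabe plus Cauchy--Schwarz;
\item Lenglart's domination inequality in both directions for $p\in[1,2)$;
\item It\^o's formula for $\|x\|^p$, followed by Doob and H\"older, for the upper bound when $p\ge 2$;
\item scalar BDG applied to $L$, plus absorption, for the lower bound when $p\ge 2$.
\end{itemize}

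All of these steps check out. The side conditions you use implicitly also hold after your localization:
\begin{itemize}
\item $X_0=A_0=0$ in both Lenglart applications;
\item $L$ and $\int\|N\|_{\HS}^{p-2}\langle N,\dd N\rangle$ are true martingales, since your Cauchy--Schwarz bound makes their brackets bounded;
\item $\|N\|_{\HS}$ is a submartingale, so Doob applies;
\item the quantities you divide by or absorb are finite.
\end{itemize}
Monotone convergence then removes the localization.

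The citation is shorter and applies in arbitrary Hilbert spaces. Your route is elementary and shows exactly why no power of $k$ enters. That $k$-independence of $\beta_p$ is precisely what the paper needs later, when it sets $\bar C=C\beta_2$.
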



\section{Proofs of Some Auxiliary Results}

\begin{lemma}\label{lem:brownian-moving-frame-bracket-bound}
Let $X = (X_t)_{t \in [0, T]}$ be an $\R^d$-valued Brownian motion, and $M \in \lin(\R^d, \so(k))$ be a given linear operator from $\R^d$ to $\so(k)$ for some $k \ge 1$. Then there exists a constant $C = C(k, d) \ge \sqrt{d}$ such that for all $\lambda \in \R$ and $t \in [0, T]$ and any progressively measurable $k \times k$-matrix valued process $A = (A_t)_{t \in [0, T]}$, one has the following inequalities whenever the corresponding right-hand sides are finite:
\begin{equation*}
\int_0^t\text{Tr}(A_s\dd[\hat{X}^\lambda, (\hat{X}^\lambda)^\dagger]_sA_s^\dagger) \le C\|M\|_{\op}^2\int_0^t \|A_s\|_{\HS}^2 \dd s,
\end{equation*}
and
\begin{equation*}
\bigg\|\int_0^t A_s \dd[\hat X^\lambda, \hat X^\lambda]_s \bigg\|_{L^2} \le C\|M\|_{\op}^2 \int_0^t \|A_s\|_{L^2} \dd s.
\end{equation*}
\end{lemma}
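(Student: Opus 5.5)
The plan is to compute the bracket $\dd[\hat X^\lambda,(\hat X^\lambda)^\dagger]_s$ and $\dd[\hat X^\lambda,\hat X^\lambda]_s$ explicitly, using the Itô representation \eqref{eq:moving-frame-ito-representation}, $\hat X^\lambda_t=\int_0^t Y^\lambda_s M(\dd X_s)(Y^\lambda_s)^\dagger$. Write $A_i(s):=Y^\lambda_sM(e_i)(Y^\lambda_s)^\dagger\in\so(k)$ for the canonical basis $e_1,\dots,e_d$ of $\R^d$, so that $\dd\hat X^\lambda_s=\sum_{i=1}^d A_i(s)\,\dd X^i_s$. Since $[X^i,X^j]_t=\delta_{ij}t$, the matrix bracket rule (entrywise definition of $[\cdot,\cdot]$ in the appendix) gives
\begin{equation*}
\dd[\hat X^\lambda,\hat X^\lambda]_s=\sum_{i=1}^d A_i(s)^2\,\dd s,\qquad \dd[\hat X^\lambda,(\hat X^\lambda)^\dagger]_s=\sum_{i=1}^d A_i(s)A_i(s)^\dagger\,\dd s=-\sum_{i=1}^dA_i(s)^2\,\dd s .
\end{equation*}
I would verify this entrywise: $[\hat X^{p\ell},\hat X^{\ell q}]=\sum_{i,j}\int A_i^{p\ell}A_j^{\ell q}\dd[X^i,X^j]$. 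Then the whole lemma reduces to deterministic pointwise matrix estimates.

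For the first inequality, $\text{Tr}(A_s Q_s A_s^\dagger)$ with $Q_s=\sum_i A_i(s)A_i(s)^\dagger$ positive semidefinite satisfies $\text{Tr}(A_sQ_sA_s^\dagger)\le\|Q_s\|_{\op}\|A_s\|_{\HS}^2\le\|Q_s\|_{\HS}\|A_s\|_{\HS}^2$. By orthogonality of $Y^\lambda_s$ (Theorem \ref{thm:rough-development-derivatives}(2), or the smooth analogue), $\|A_i(s)\|_{\HS}=\|M(e_i)\|_{\HS}\le\|M\|_{\op}$, so $\|Q_s\|_{\HS}\le\sum_i\|A_i(s)\|_{\HS}^2\le d\|M\|_{\op}^2$. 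Integrating in $s$ gives the first bound with constant $d$.

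For the second inequality, $\int_0^tA_s\,\dd[\hat X^\lambda,\hat X^\lambda]_s=\int_0^tA_s\big(\sum_iA_i(s)^2\big)\dd s$ is a pathwise Lebesgue integral, so Minkowski's integral inequality in $L^2(\Omega;\HS)$ gives $\|\cdot\|_{L^2}\le\int_0^t\|A_s\sum_iA_i(s)^2\|_{L^2}\dd s$, and submultiplicativity of $\|\cdot\|_{\HS}$ together with the deterministic bound $\|\sum_iA_i(s)^2\|_{\HS}\le d\|M\|_{\op}^2$ yields $\|A_s\sum_iA_i(s)^2\|_{L^2}\le d\|M\|_{\op}^2\|A_s\|_{L^2}$. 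Taking $C(k,d):=\max\{d,\sqrt d\}=d$ (enlarged to be $\ge1$ if desired) finishes both statements; the constant in fact does not depend on $k$.

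The only step needing care is the bracket computation: making sure the matrix-valued bracket convention $[A,B]^{ij}=\sum_\ell[A^{i\ell},B^{\ell j}]$ indeed produces the product $A_i(s)A_i(s)^\dagger$ (resp. $A_i(s)^2$) in the correct order, and that the Stratonovich correction terms in $\hat X^\lambda$ vanish so only the Itô martingale part contributes — but the latter is already established in \eqref{eq:moving-frame-ito-representation}, and bounded-variation parts contribute nothing to brackets anyway. Measurability/finiteness issues are handled by the hypothesis that the right-hand sides are finite.
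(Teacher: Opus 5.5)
Your proof is correct, and it takes a genuinely different and sharper route than the paper's.

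\textbf{The paper's route.} The paper never identifies the bracket density as a matrix product. It expands $[\hat X^\lambda,(\hat X^\lambda)^\dagger]^{\ell\ell'}$ into a five-index sum of products of entries of $Y^\lambda_s$ and of $M(e_r)$. It then bounds every entry crudely, using only $|Y^{\lambda,ij}_s|\le 1$ and $|M_{pq}(e_r)|\le\|M\|_{\op}$, which gives $|\mathcal C^{\ell\ell'}_M(s)|\le dk^5\|M\|_{\op}^2$. Cauchy--Schwarz on the remaining index sums then yields $C_1=dk^6$ and $C_2=dk^{13/2}$.

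\textbf{Your route.} You recognise the density as $\sum_i A_i(s)A_i(s)^\dagger=Y^\lambda_s\bigl(\sum_i M(e_i)M(e_i)^\dagger\bigr)(Y^\lambda_s)^\dagger$ (respectively $\sum_i A_i(s)^2$). You then combine:
\begin{itemize}
\item invariance of $\|\cdot\|_{\HS}$ under conjugation by $O(k)$;
\item the trace bound $\text{Tr}(AQA^\dagger)\le\|Q\|_{\op}\|A\|_{\HS}^2$;
\item submultiplicativity of the Hilbert--Schmidt norm.
\end{itemize}
This uses the orthogonality of $Y^\lambda_s$ in full, rather than only through the entry bound $|Y^{\lambda,ij}_s|\le 1$. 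That entry bound is exactly where the paper picks up its powers of $k$.

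\textbf{What each buys.} Your constant $C=d$ does not depend on $k$. Hence $\bar C$ in Theorem~\ref{thm:stopped-brownian-derivative-bound}, and the radius in Corollary~\ref{cor:stopped-brownian-signature-determinacy}, become $k$-free. Your computation also matches the explicit formula $\dd[\hat X^\lambda,\hat X^\lambda]_s=Y^\lambda_s\bigl(\sum_i M(e_i)^2\bigr)(Y^\lambda_s)^\dagger\,\dd s$, which the paper itself uses in the $n=1$ steps of its inductions. The paper's entrywise route has only one advantage: it avoids any matrix-norm inequalities.

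One cosmetic point: your $A_i(s)$ clashes with the lemma's test process $A_s$, so rename one of them (e.g.\ $B_i(s)$) when you write it up.
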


\begin{proof}
Let $e_1, \ldots, e_d$ be the canonical basis for $\R^d$, then for a given $M \in \lin(\R^d, \so(k))$, we have $M(X_t) = X^1_tM(e_1) + \ldots + X^d_t M(e_d)$. Since $X$ is an $\R^d$-valued Brownian motion (so that $[X^i, X^j]_t = \delta_{ij}t$), it is easy to see that for any pairs $(p, q)$ and $(p^\prime, q^\prime)$ in $\{1, \ldots, k\}^2$,
\begin{equation*}
[M_{pq}(X), M_{p^\prime q^\prime}(X)]_t = \bigg(\sum_{r = 1}^d M_{pq}(e_r)M_{p^\prime q^\prime}(e_r)\bigg)t.
\end{equation*}
Further, since $\hat X^\lambda_t = \int_0^t Y^\lambda_s M(\dd X_s)(Y^\lambda_s)^\dagger$, for any pair $(\ell, \ell^\prime) \in \{1, \ldots, k\}^2$, we obtain that
\begin{align*}
 [\hat{X}^\lambda, (\hat{X}^\lambda)^\dagger]^{\ell \ell^\prime}_t & = \sum_{j = 1}^k [\hat X^{\lambda, \ell j}, \hat X^{\lambda, \ell^\prime j} ]_t \\
 & = \sum_{j = 1}^k \bigg[ \sum_{p, q = 1}^k \int_0^ \cdot Y^{\lambda, \ell p}_sY^{\lambda, jq}_s M_{pq}(\dd X_s), \sum_{p^\prime, q^\prime = 1}^k \int_0^ \cdot Y^{\lambda, \ell^\prime p^\prime}_sY^{\lambda, jq^\prime}_sM_{p^\prime q^\prime}(\dd X_s) \bigg]_t\\
 & = \sum_{j, p, q, p^\prime, q^\prime = 1}^k \int_0^t Y^{\lambda, \ell p}_sY^{\lambda, jq}_s Y^{\lambda, \ell^\prime p^\prime}_sY^{\lambda, jq^\prime}_s \dd[M_{pq}(X), M_{p^\prime q^\prime}(X)]_s\\
 & = \int_0^t \sum_{j, p, q, p^\prime, q^\prime = 1}^k Y^{\lambda, \ell p}_sY^{\lambda, jq}_s Y^{\lambda, \ell^\prime p^\prime}_sY^{\lambda, jq^\prime}_s \sum_{r = 1}^d M_{pq}(e_r)M_{p^\prime q^\prime}(e_r) \dd s.
\end{align*}
Let $\mathcal C^{\ell \ell^\prime}_{M}(s)$ denote the integrand inside the last integral above, so that
\begin{equation*}
\dd [\hat{X}^\lambda, (\hat{X}^\lambda)^\dagger]^{\ell \ell^\prime}_t = \mathcal C^{\ell \ell^\prime}_{M}(t) \dd t
\end{equation*}
holds. Since for all $\lambda \in \R$ and $s \in [0, T]$ the matrix $Y^\lambda_s \in O(k)$ is orthogonal, it follows that for all $(i, j) \in \{1, \ldots, k\}^2$, $|Y^{\lambda, ij}_s| \le 1$. Also, $|M_{pq}(e_r)| \le \|M\|_{\op}$ for all $p, q = 1, \ldots, k$ and $r = 1, \ldots, d$. Combining these bounds, we obtain that for all $s \in [0, T]$,
\begin{align*}
 |\mathcal C^{\ell \ell^\prime}_M(s)| & \le \sum_{j, p, q, p^\prime, q^\prime = 1}^k |Y^{\lambda, \ell p}_s||Y^{\lambda, jq}_s||Y^{\lambda, \ell^\prime p^\prime}_s||Y^{\lambda, jq^\prime}_s|\sum_{r = 1}^d |M_{pq}(e_r)||M_{p^\prime q^\prime}(e_r)| \\
 & \le dk^5\|M\|_{\op}^2.
\end{align*}
Thanks to the above bound, we can further deduce that
\begin{align*}
 \int_0^t\text{Tr}(A_s\dd[\hat{X}^\lambda, (\hat{X}^\lambda)^\dagger]_sA_s^\dagger) & = \sum_{i = 1}^k \sum_{\ell, \ell^\prime = 1}^k \int_0^t A^{i\ell}_s \dd[\hat{X}^\lambda, (\hat{X}^\lambda)^\dagger]^{\ell \ell^\prime}_sA^{i\ell^\prime}_s \\
 & = \sum_{i = 1}^k \sum_{\ell, \ell^\prime = 1}^k \int_0^t A^{i\ell}_sA^{i\ell^\prime}_s \mathcal C_M^{\ell \ell^\prime}(s) \dd s
\end{align*}
and therefore
\begin{align*}
 \int_0^t\text{Tr}(A_s\dd[\hat{X}^\lambda, (\hat{X}^\lambda)^\dagger]_sA_s^\dagger) & \le \sum_{i = 1}^k \sum_{\ell, \ell^\prime = 1}^k \int_0^t |A^{i\ell}_s||A^{i\ell^\prime}_s| |\mathcal C_M^{\ell \ell^\prime}(s)| \dd s \\
 & \le dk^5\|M\|_{\op}^2 \int_0^t \sum_{i, \ell, \ell^\prime = 1}^k |A^{i\ell}_s||A^{i\ell^\prime}_s| \dd s\\
 & \le dk^6\|M\|_{\op}^2 \int_0^t \|A_s\|_{\HS}^2 \dd s,
\end{align*}
where we applied the Cauchy-Schwarz inequality to deduce $\sum_{i, \ell, \ell^\prime = 1}^k |A^{i\ell}_s||A^{i\ell^\prime}_s| \le k\|A_s\|_{\HS}^2$ in the above last inequality. As a result, we may take $C_1 = C_1(k, d) = dk^6$, which satisfies our requirement for the first inequality.

For the second inequality, note that by the same argument as above, we can derive that there exists a progressively measurable $k \times k$-matrix valued process $\mathcal D_M(t), t \in [0, T]$ such that $\dd[\hat X^\lambda, \hat X^\lambda]_t = \mathcal D_M(t) \dd t$ and $|\mathcal D^{\ell \ell^\prime}_M(t)| \le dk^5\|M\|_{\op}^2$ for all $t$ and all $\ell, \ell^\prime = 1, \ldots, k$. It follows that
\begin{equation*}
\int_0^t A_s \dd[\hat X^\lambda, \hat X^\lambda]_s = \int_0^t A_s \mathcal D_M(s) \dd s
\end{equation*}
and therefore by the Minkowski inequality,
\begin{equation*}
\bigg\|\int_0^t A_s \dd[\hat X^\lambda, \hat X^\lambda]_s \bigg\|_{L^2} \le \int_0^t \|A_s \cdot \mathcal D_M(s)\|_{L^2} \dd s.
\end{equation*}
Note that for each $s \in [0, t]$, one has (using Cauchy-Schwarz inequality again)
\begin{align*}
 \|A_s \cdot \mathcal D_M(s)\|^2_{L^2} & = \E[\|A_s \cdot \mathcal D_M(s)\|_{\HS}^2 ] \\
 & = \E\bigg[ \sum_{i, j = 1}^k \bigg( \sum_{\ell = 1}^k A^{i\ell}_s\mathcal D_M^{\ell j}(s)\bigg)^2 \bigg] \\
 & \le (dk^5\|M\|_{\op}^2)^2\E\bigg[ \sum_{i, j = 1}^k \bigg( \sum_{\ell = 1}^k |A^{i\ell}_s|\bigg)^2\bigg] \\
 & \le (dk^5\|M\|_{\op}^2)^2 k^2 \E\bigg[ \sum_{i, j, \ell = 1}^k |A^{i\ell}_s|^2 \bigg] \\
 & \le (dk^5\|M\|_{\op}^2)^2 k^3 \E[\|A_s\|_{\HS}^2],
\end{align*}
which implies that
\begin{equation*}
\bigg\|\int_0^t A_s \dd[\hat X^\lambda, \hat X^\lambda]_s \bigg\|_{L^2} \le \int_0^t \|A_s \mathcal D_M(s)\|_{L^2} \dd s \le dk^{5 + \frac{3}{2}}\|M\|_{\op}^2 \int_0^t \|A_s\|_{L^2} \dd s.
\end{equation*}
As a result, we may take $C_2 = C_2(k, d) = dk^{5 + \frac{3}{2}}$ so that the second inequality holds. Since $C_2 \ge C_1$, finally we can choose $C = C_2$ so that both inequalities are satisfied with $C$.
\end{proof}

\textbf{Acknowledgements.} HN is supported in part by the EPSRC Program Grant [Grant No. UKRI1010] entitled ``High order mathematical and computational infrastructure for streamed data that enhance contemporary generative and large language models'' and by the SURE-AI Centre grant 357482, Research Council of Norway. HN is grateful for useful discussions with Fabian Andsem Harang and Kurusch Ebrahimi-Fard. CL is partially supported by the National Key R\&D Program of China (No. 2023YFA1010900).

\textbf{Declaration of generative AI use}. The authors used OpenAI's ChatGPT to assist in exploring proof strategies and producing preliminary drafts of parts of several proofs. All AI-assisted arguments were carefully checked, revised where necessary, and independently verified by the authors, who take full responsibility for the correctness and integrity of the manuscript.

\bibliography{quellen}{}
\bibliographystyle{amsalpha}

\end{document}